\numberwithin{equation}{section}
\theoremstyle{plain}
\newtheorem{thm}{Theorem}[section]
\newtheorem{lem}[thm]{Lemma}
\newtheorem{prop}[thm]{Proposition}
\theoremstyle{definition}
\newtheorem{df}[thm]{Definition}
\newtheorem{rem}[thm]{Remark}
\newtheorem{eg}[thm]{Example}
\tikzset{%
 shaded/.style={draw, shape=circle, fill=black!35, inner sep=1.4pt},
 unshaded/.style={draw, shape=circle, fill=white, inner sep=1.4pt},
 quasi/.style={draw, shape=rectangle, rounded corners=3pt, fill=white, inner sep=2.5pt, minimum height=14.5pt},
 blob/.style={draw, shape=rectangle, rounded corners=12pt, thin, densely dotted},
 arrow/.style={->, thin, >=latex, shorten >=2.5pt, shorten <=2.5pt},
 order/.style={thin},
 curvy/.style={thin, looseness=1.2, bend angle=70},
 fatcurvy/.style={thin, looseness=1.7, bend angle=75},
 label/.style={shape=rectangle, inner sep=6pt},
 auto}
\newcommand{\defn}[1]{{\emph{#1}}}
\newcommand{\eusbA}{\medsub e {\kern-0.75pt\A\kern-0.75pt}}
\newcommand{\cat}[1]{\boldsymbol{\mathscr{#1}}}
\newcommand{\CA}{{\cat A}}
\newcommand{\CJ}{\cat J}
\newcommand{\CM}{\cat M}
\newcommand{\CV}{\cat V}
\newcommand{\CX}{\cat X}
\font\bmi=cmmi8 scaled 1440
\newcommand{\powerset}{\raise.6ex\hbox{\bmi\char'175 }}
\newcommand{\twiddle}[1]{\mathbb{#1}} 
\newcommand{\MT}{\twiddle M}
\newcommand{\JT}{\twiddle J}
\newcommand{\T}{\mathscr{T}}
\newcommand{\A}{\mathbf A}
\newcommand{\B}{\mathbf B}
\newcommand{\C}{\mathbf C}
\newcommand{\K}{\mathbf K}
\newcommand{\F}{\mathbf F}
\newcommand{\M}{\mathbf M}
\renewcommand{\S}{\mathbf S}
\newcommand{\X}{\mathbb X}
\newcommand{\Y}{\mathbb Y}
\newcommand{\Ys}{\Y\kern -2pt _s}
\renewcommand{\t}{\mathrm t} 
\renewcommand{\k}{\mathrm k} 
 \DeclareMathOperator{\id}{id}
 \DeclareMathOperator{\Sub}{Sub} 
 \DeclareMathOperator{\Val}{Val} 
 \DeclareMathOperator{\Con}{Con} 
 \DeclareMathOperator{\ISP}{\mathsf{ISP}}
 \DeclareMathOperator{\HSP}{\mathsf{HSP}}
 \DeclareMathOperator{\IScP}{{\mathsf{IS} _{\mathrm{c}}
 \mathsf{P}^+}}
 \DeclareMathOperator{\sg}{sg} 
\newcommand{\comp}{{\setminus}}
\newcommand{\rest}[1]{{\upharpoonright}_{#1}}
\renewcommand{\le}{\leqslant}
\renewcommand{\ge}{\geqslant}
\newcommand{\lez}{\leqslant^0}
\newcommand{\gez}{\geqslant^0}
\newcommand{\lej}{\leqslant^j}
\newcommand{\lek}{\leqslant^k}
\newcommand{\gek}{\geqslant^k}
\newcommand{\lejk}{\leqslant^{jk}}
\newcommand{\du}{\mathbin{\dot\cup}}
\newcommand{\bigdu}{\overset{.}{\bigcup}}
\newcommand{\up}{{\uparrow}}
\newcommand{\down}{{\downarrow}}
\newcommand{\lsem}{[\kern-1.75pt[}
\newcommand{\rsem}{]\kern-1.75pt]}
\newcommand{\conv}{{}^{\kern2pt\raise-3pt\hbox{$\breve{}$}}} \newcommand{\convl}{{}^{\kern1pt\raise-3pt\hbox{$\breve{}$}}} 
\newcommand{\convsub}[1]{\rlap{$^{\kern2pt\raise-3pt\hbox{$\breve{}$}}$}{}_{#1}} 
\newcommand{\SEVEN}{\mathcal{SEVEN}}
\newcommand{\FOUR}{\mathcal{FOUR}}
\newcommand{\JB}{\mathbf{J}}
\newcommand{\mbf}{\boldsymbol{f}}
\newcommand{\mbt}{\boldsymbol{t}}
\newcommand{\mbdf}{\boldsymbol{df}}
\newcommand{\mbdt}{\boldsymbol{dt}}
\newcommand{\mbdT}{\boldsymbol{d}\top}
\newcommand{\bsF}{F} 
\newcommand{\bsT}{T} 
\newcommand{\bF}{F_{\kern -1pt n}} 
\newcommand{\bT}{T_{\kern -1pt n}} 
\newcommand{\bFA}{\mathbf F_{\kern -1pt \A}} 
\newcommand{\bTA}{\mathbf T_{\!\A}} 
\newcommand{\bFB}{\mathbf F_{\kern -1pt \B}} 
\newcommand{\bTB}{\mathbf T_{\kern -1pt \B}} 
\newcommand{\bFC}{\mathbf F_{\kern -1pt \C}} 
\newcommand{\bTC}{\mathbf T_{\kern -1pt \C}} 
\newcommand{\bFn}{\mathbf F_{\kern -1pt n}} 
\newcommand{\bTn}{\mathbf T_{\kern -1pt n}} 
\newcommand{\two}{\boldsymbol 2}
\newcommand{\one}{\boldsymbol 1}
\newcommand{\zero}{\boldsymbol 0}
\begin{document}


\title[Dualities for a new class of default bilattices]{Expanding Belnap: dualities for a new class of default bilattices}

\author[A. P. K. Craig]{Andrew P. K. Craig}
\address{Department of Mathematics and Applied Mathematics\\
University of Johannesburg\\PO Box 524, Auckland Park, 2006\\South~Africa}
\email{acraig@uj.ac.za}

\author[B. A. Davey]{Brian A. Davey}
\address{Department of Mathematics and Statistics\\La Trobe University\\Victoria 3086\\Australia}
\email{b.davey@latrobe.edu.au}

\author[M. Haviar]{Miroslav Haviar}
\address{Department of Mathematics\\Faculty of Natural Sciences, M. Bel University\\Tajovsk\'eho 40, 974~01 Bansk\'a Bystrica\\Slovakia}
\email{miroslav.haviar@umb.sk}

\dedicatory{Dedicated to the memory of Prof.\
Beloslav Rie\v can}

\subjclass{06D50, 08C20, 03G25}

\keywords{bilattice, default bilattice, natural duality, multi-sorted natural duality}

\begin{abstract}
Bilattices provide an algebraic tool with which to model
simultaneously knowledge and truth. They were introduced by Belnap
in 1977 in a paper entitled \emph{How a computer should think}.
Belnap argued that instead of using a logic with two values, for
`true' ($\mbt$) and `false' ($\mbf$), a computer should use a logic
with two further values, for `contradiction' ($\top$) and `no
information' ($\bot$). The resulting structure is equipped with two
lattice orders, a \emph{knowledge order} and a \emph{truth order},
and hence is called a \emph{bilattice}.

Prioritised default bilattices include not only values for `true'
($\mbt_0$), `false' ($\mbf_0$), `contradiction' and `no
information', but also indexed families of default values, $\mbt_1,
\dots, \mbt_n$ and $\mbf_1, \dots, \mbf_n$, for simultaneous
modelling of degrees of knowledge and truth.

We focus on a new family of prioritised default bilattices: $\mathbf
J_n$, for $n \in \omega$.
The bilattice $\mathbf J_0$ is precisely Belnap's seminal example.
We address mathematical rather than logical aspects of our
prioritised default bilattices.
We obtain a single-sorted topological representation for the
bilattices in the quasivariety $\CJ_n$ generated by $\mathbf J_n$,
and separately a multi-sorted topological representation for the
bilattices in the variety $\CV_n$ generated by $\mathbf J_n$. Our
results provide an interesting example where the multi-sorted
duality for the variety has a simpler structure than the
single-sorted duality for the quasivariety.
\end{abstract}

\maketitle


\section{Introduction}\label{sec:intro}

We describe a new class of default bilattices
$\{\, \JB_n \mid n \in \omega\,\}$ for use in prioritised default logic.
While the first of these bilattices ($n=0$) is Belnap's original four-element bilattice~\cite{Bel-think},
for $n \geqslant 1$ these bilattices provide new algebraic structures for dealing
with inconsistent and incomplete information. In particular, the structure of the knowledge order gives a
new method for interpreting contradictory responses from amongst a hierarchy of `default true' and `default false' responses.

We seek representations for algebras in the quasivariety $\CJ_n
= \ISP(\JB_n)$, and more generally in the variety $\CV_n =
\HSP(\JB_n)$, generated by~$\JB_n$. For $n\ge 1$, our bilattices are
not interlaced and hence we lack the much-used product
representation. This leads us to develop a concrete  representation
via the theory of natural dualities. We prove a single-sorted
duality for the quasivariety $\CJ_n$ and a multi-sorted duality for
the variety $\CV_n$. Furthermore, we are able to show that our
dualities are \emph{optimal} in the sense that none of the structure
of the dualising object can be removed without destroying the
duality.

To place both our family of bilattices, and our results concerning them, in an appropriate context, we recall some history. Bilattices were investigated in the late 1980's by Ginsberg~\cite{Gins86, Gins88}
as a method for inference with incomplete and contradictory information.
These investigations built on the simple example introduced by
Belnap~\cite{Bel-think} about a decade earlier.
Belnap proposed that a computer should have a truth value,~$\top$, which would be assigned to any statement that it had been told separately
was both true and false.
This is a very plausible idea in situations
where a computer might receive information from different sources.
Equally important is the ability of a computer to make decisions
based on incomplete information. The truth value $\bot$
is assigned to statements about which the computer has no information.
This idea was represented by the four-element structure shown in Figure~\ref{fig:FOUR}. The elements $\mbt$ and $\mbf$ represent
`true' and `false', while the elements $\top$ and $\bot$ represent `contradiction' and
`no information'. The order represented on the vertical axis in Figure~\ref{fig:FOUR} is the \defn{knowledge order}
($\le_\k$), while the horizontal axis represents the
\defn{truth order} ($\le_\t$).

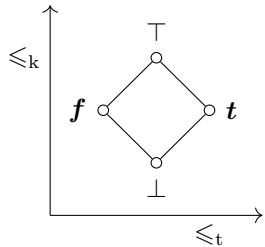
\begin{figure}[ht]
\centering
\begin{tikzpicture}[scale=0.7]
\path[-{>[length=1mm]}] (0,0) edge node[left,near end] {$\le_\k$} (0,4);
\path[-{>[length=1mm]}] (0,0) edge node[near end,below] {$\le_\t$} (4,0);
\begin{scope}[xshift=2cm,yshift=1cm]
  \node[unshaded] (bot) at (0,0) {};
  \node[unshaded] (f) at (-1,1) {};
  \node[unshaded] (t) at (1,1) {};
  \node[unshaded] (top) at (0,2) {};
  \draw[order] (bot) -- (f) -- (top);
  \draw[order] (bot) -- (t) -- (top);
  \node[label,anchor=north] at (bot) {$\bot$};
  \node[label,anchor=east] at (f) {$\mbf$};
  \node[label,anchor=west] at (t) {$\mbt$};
  \node[label,anchor=south] at (top) {$\top$};
\end{scope}
\end{tikzpicture}
\caption{The four truth values proposed by Belnap.}\label{fig:FOUR}
\end{figure}

A statement $p$ which is assigned the truth value $\top$
as a result of contradictory information is less true
than a statement $q$ which is assigned $\mbt$,
 as there is a source saying that $p$ is false. On the other hand, more is known about $p$ than is known about $q$, as there are
at least two different sources providing information.
(The term `information order' is used by some authors to refer to what we call the knowledge order.)

Generalising this example, a bilattice has two lattice orders,
$\le_\k$ (knowledge) and $\le_\t$ (truth)---see
Definitions~\ref{def:prebilat} and~\ref{def:bilat} for details.
While the concept of a truth order is familiar, for example, from
multi-valued logic, the knowledge order is less familiar, and we
discuss it very briefly. The join $\oplus$ in the knowledge order is
called \emph{gullability}: $a\oplus b$ represents the combined
information from $a$ and $b$ with no concern for any inherent
contradictions. The meet $\otimes$ in the knowledge order is called
\emph{consensus}: $a\otimes b$ represents the most information upon
which $a$ and $b$ agree. (See Fitting~\cite{Fit-nice} for an
excellent introduction to bilattices with many motivating examples.)

Belnap's four-element bilattice is often referred to as $\FOUR$. The bilattice $\SEVEN$ was proposed by Ginsberg~\cite[Figure 4]{Gins86}
for use in inference with default logic; see Figure~\ref{fig:SEVEN}, which shows $\SEVEN$ as it is usually depicted in the literature along with its knowledge order $\le_\k$ and truth order~$\le_\t$.
Note that $\SEVEN$ has two additional truth values, $\mbdt$ and $\mbdf$,
which represent `true by default' and `false by default', along with an element $\mbdT$ that represents the contradiction
that arises if a statement is both true by default and
false by default. The idea has been extended to include more default values (cf.~\cite[Figure 7]{Gins88}), where the sequence of
`true by default' truth values is decreasing in both the knowledge
order and truth order, while the sequence of `false by default' truth values is decreasing in the knowledge order but increasing in the truth order---see also Figure~\ref{fig:Kn-t-order}.

\begin{figure}[t]
\centering
\begin{tikzpicture}[scale=1]
\begin{scope}
  \node[unshaded] (bot) at (0,0) {};
  \node[unshaded] (df) at (-0.5,0.5) {};
  \node[unshaded] (dt) at (0.5,0.5) {};
  \node[unshaded] (dtop) at (0,1) {};
  \node[unshaded] (f) at (-1,2) {};
  \node[unshaded] (t) at (1,2) {};
  \node[unshaded] (top) at (0,3) {};
  \draw[order] (bot) -- (df) -- (f) -- (top);
  \draw[order] (bot) -- (dt) -- (t) -- (top);
  \draw[order] (df) -- (dtop) -- (f);
  \draw[order] (dt) -- (dtop) -- (t);
  \node at (0,-1) {$\mathcal{SEVEN}$};
  \node[label,anchor=north] at (bot) {$\bot$};
  \node[label,anchor=east] at (df) {$\mbdf$};
  \node[label,anchor=west] at (dt) {$\mbdt$};
  \node[label,anchor=south,yshift=5pt] at (dtop) {$\mbdT$};
  \node[label,anchor=east] at (f) {$\mbf$};
  \node[label,anchor=west] at (t) {$\mbt$};
  \node[label,anchor=south] at (top) {$\top$};
\end{scope}
\begin{scope}[xshift=4cm] 
  \node[unshaded] (bot) at (0,0) {};
  \node[unshaded] (df) at (-0.75,0.75) {};
  \node[unshaded] (dt) at (0.75,0.75) {};
  \node[unshaded] (dtop) at (0,1.5) {};
  \node[unshaded] (f) at (-0.75,2.25) {};
  \node[unshaded] (t) at (0.75,2.25) {};
  \node[unshaded] (top) at (0,3) {};
  \draw[order] (bot) -- (df) -- (dtop) -- (f) -- (top);
  \draw[order] (bot) -- (dt) -- (dtop) -- (t) -- (top);
  \node at (0,-1) {$\le_\k$};
  \node[label,anchor=north] at (bot) {$\bot$};
  \node[label,anchor=east] at (df) {$\mbdf$};
  \node[label,anchor=west] at (dt) {$\mbdt$};
  \node[label,anchor=east,xshift=-2pt] at (dtop) {$\mbdT$};
  \node[label,anchor=east] at (f) {$\mbf$};
  \node[label,anchor=west] at (t) {$\mbt$};
  \node[label,anchor=south] at (top) {$\top$};
\end{scope}
\begin{scope}[xshift=7cm] 
  \node[unshaded] (bot) at (2,1.5) {};
  \node[unshaded] (df) at (1,0.75) {};
  \node[unshaded] (dt) at (1,2.25) {};
  \node[unshaded] (dtop) at (1,1.5) {};
  \node[unshaded] (f) at (0,0) {};
  \node[unshaded] (t) at (0,3) {};
  \node[unshaded] (top) at (0,1.5) {};
  \draw[order] (f) -- (top) -- (t);
  \draw[order] (df) -- (dtop) -- (dt);
  \draw[order] (f) -- (df) -- (bot) -- (dt) -- (t);
  \node at (1,-1) {$\le_\t$};
  \node[label,anchor=west] at (bot) {$\bot$};
  \node[label,anchor=north,xshift=5pt] at (df) {$\mbdf$};
  \node[label,anchor=south,xshift=5pt] at (dt) {$\mbdt$};
  \node[label,anchor=east,xshift=2pt] at (dtop) {$\mbdT$};
  \node[label,anchor=east,yshift=-3pt] at (f) {$\mbf$};
  \node[label,anchor=east,yshift=3pt] at (t) {$\mbt$};
  \node[label,anchor=east] at (top) {$\top$};
\end{scope}
\end{tikzpicture}
\caption{Ginsberg's bilattice for default logic.}\label{fig:SEVEN}
\end{figure}
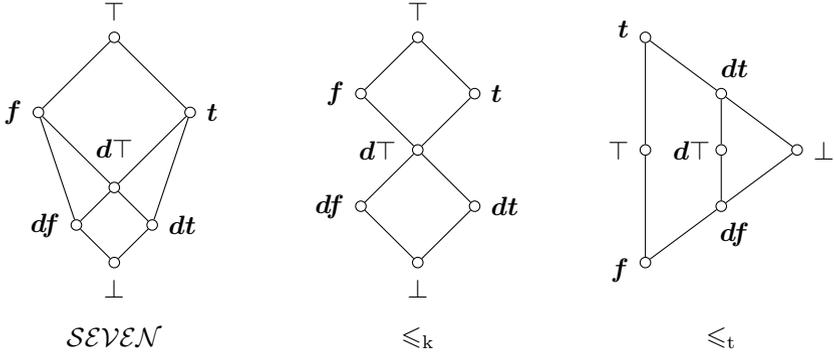

A criticism that can be levelled at Ginsberg's default bilattice $\SEVEN$ is that the element $\mbdT$ is both the $\k$-meet of $\mbt$ and $\mbf$, and the $\k$-join of $\mbdt$ and~$\mbdf$. That is, $\mbt \otimes \mbf = \mbdT = \mbdt \oplus \mbdf$.
If an agent is told that a certain statement is both true and false,
the level of \emph{agreement} or \emph{consensus} is modelled by the
bilattice element $\mbt \otimes \mbf$. The $\k$-join $\mbdt
\oplus \mbdf$ represents the total knowledge that an agent
has if it is told that something is both true by default and false by
default. However, it is not  clear that $\mbt \otimes \mbf$ should
always represent the same degree of knowledge and truth as the
$\k$-join $\mbdt \oplus \mbdf$.
The family $\{\, \JB_n \mid n\in \omega\,\}$ of default bilattices is designed to overcome this criticism.

The main difference between our family of default bilattices and the prioritised default bilattices in the style of $\SEVEN$ is that in our family there is no distinction between the level at which the contradictions or agreements take place.
That is, we propose that, for $n\in \omega$, the bilattice $\JB_n$ should satisfy
\[
\mbt_i \oplus  \mbf_j = \top  \quad \text{and} \quad
\mbt_i \otimes \mbf_j = \bot,
\]
for all $i,j \in \{0,\ldots,n\}$. Any contradictory response that includes some level of truth ($\mbt_i$) and some level of falsity ($\mbf_j$) is registered as a total contradiction ($\top$) and a total lack of consensus ($\bot$).
An illustration of such proposed bilattices with default
truth values drawn in their knowledge order is given in Figure~\ref{fig:468}---see Definition~\ref{def:Jn} for the formal definition.

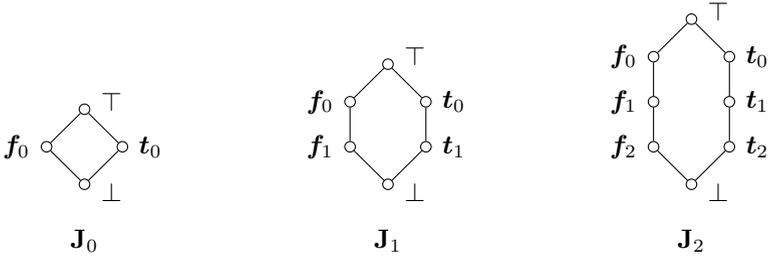
\begin{figure}[ht]
\centering
\begin{tikzpicture}[scale=0.5]
\begin{scope}
  \node at (0,-1.5) {$\JB_0$};
  \node[unshaded] (bot) at (0,0) {};
  \node[unshaded] (f0) at (-1,1) {};
  \node[unshaded] (t0) at (1,1) {};
  \node[unshaded] (top) at (0,2) {};
  \draw[order] (bot) -- (f0) -- (top);
  \draw[order] (bot) -- (t0) -- (top);
  \node[label,anchor=west,yshift=-3pt] at (bot) {$\bot$};
  \node[label,anchor=east] at (f0) {$\mbf_0$};
  \node[label,anchor=west] at (t0) {$\mbt_0$};
  \node[label,anchor=west,yshift=3pt] at (top) {$\top$};
\end{scope}
\begin{scope}[xshift=8cm]
  \node at (0,-1.5) {$\JB_1$};
  \node[unshaded] (bot) at (0,0) {};
  \node[unshaded] (f1) at (-1,1) {};
  \node[unshaded] (t1) at (1,1) {};
  \node[unshaded] (f0) at (-1,2.2) {};
  \node[unshaded] (t0) at (1,2.2) {};
  \node[unshaded] (top) at (0,3.2) {};
  \draw[order] (bot) -- (f1) -- (f0) -- (top);
  \draw[order] (bot) -- (t1) -- (t0) -- (top);
  \node[label,anchor=west,yshift=-3pt] at (bot) {$\bot$};
  \node[label,anchor=east] at (f1) {$\mbf_1$};
  \node[label,anchor=west] at (t1) {$\mbt_1$};
  \node[label,anchor=east] at (f0) {$\mbf_0$};
  \node[label,anchor=west] at (t0) {$\mbt_0$};
  \node[label,anchor=west,yshift=3pt] at (top) {$\top$};
\end{scope}
\begin{scope}[xshift=16cm]
  \node at (0,-1.5) {$\JB_2$};
  \node[unshaded] (bot) at (0,0) {};
  \node[unshaded] (f2) at (-1,1) {};
  \node[unshaded] (t2) at (1,1) {};
  \node[unshaded] (f1) at (-1,2.2) {};
  \node[unshaded] (t1) at (1,2.2) {};
  \node[unshaded] (f0) at (-1,3.4) {};
  \node[unshaded] (t0) at (1,3.4) {};
  \node[unshaded] (top) at (0,4.4) {};
  \draw[order] (bot) -- (f2) -- (f1) -- (f0) -- (top);
  \draw[order] (bot) -- (t2) -- (t1) -- (t0) -- (top);
  \node[label,anchor=west,yshift=-3pt] at (bot) {$\bot$};
  \node[label,anchor=east] at (f2) {$\mbf_2$};
  \node[label,anchor=west] at (t2) {$\mbt_2$};
  \node[label,anchor=east] at (f1) {$\mbf_1$};
  \node[label,anchor=west] at (t1) {$\mbt_1$};
  \node[label,anchor=east] at (f0) {$\mbf_0$};
  \node[label,anchor=west] at (t0) {$\mbt_0$};
  \node[label,anchor=west,yshift=3pt] at (top) {$\top$};
\end{scope}
\end{tikzpicture}
\caption{The bilattices $\JB_0$, $\JB_1$ and $\JB_2$, drawn in their knowledge order.}\label{fig:468}
\end{figure}

Natural duality theory was first applied to the variety of distributive bilattices by Cabrer and Priestley~\cite{CPdbl}.
Initially Craig~\cite{C-thesis}, and later
Cabrer, Craig and Priestley~\cite{CCP15}, considered a family $\{\, \K_n \mid n \in \omega\,\}$ of non-interlaced default bilattices that
generalise Belnap's and Ginsberg's examples;
indeed, $\K_0$ is $\FOUR$ and $\K_1$ is $\SEVEN$---see Figure~\ref{fig:Kn-t-order}.
In both \cite{C-thesis} and \cite{CCP15} the authors applied natural duality theory to produce a duality for the  quasivariety $\ISP(\K_n)$ generated by $\K_n$, and in \cite{CCP15} they also
produced a multi-sorted duality for the variety $\HSP(\K_n)$ generated by~$\K_n$.

While our family of default bilattices overcomes the criticism mentioned above of default bilattices in the style of $\SEVEN$, it comes at a price.
As with the dualities for the quasivariety and the variety generated by~$\K_n$, to obtain our dualities for $\CJ_n$ and $\CV_n$ we are required to analyse the lattices of subuniverses of certain binary products of algebras from $\CV_n$. This turns out to be substantially more difficult in the case of $\JB_n$ than in the case of~$\K_n$ due to the sizes of the subuniverse lattices.
Nevertheless, the dualities we obtain, particularly in the multi-sorted case, are quite natural---see Remark~\ref{rem:compare}.

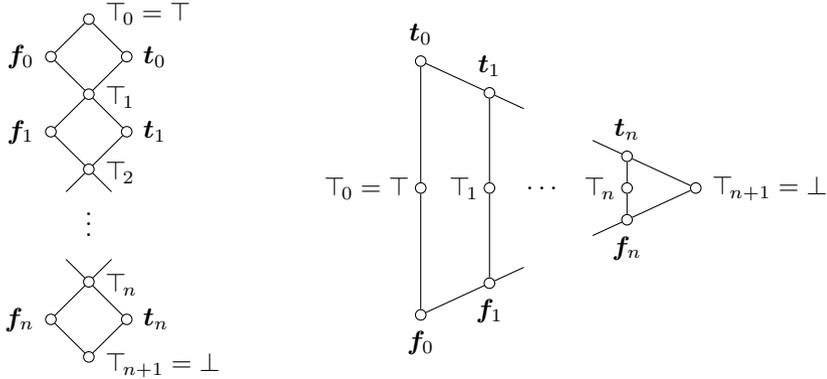
\begin{figure}[t]
\centering
\begin{tikzpicture}
\begin{scope}[scale=0.5]
  \node[unshaded] (bot) at (0,0) {};
  \node[unshaded] (fn) at (-1,1) {};
  \node[unshaded] (tn) at (1,1) {};
  \node[unshaded] (topn) at (0,2) {};
  \node at (0,3.75) {$\vdots$};
  \node[unshaded] (top2) at (0,5) {};
  \node[unshaded] (f1) at (-1,6) {};
  \node[unshaded] (t1) at (1,6) {};
  \node[unshaded] (top1) at (0,7) {};
  \node[unshaded] (f0) at (-1,8) {};
  \node[unshaded] (t0) at (1,8) {};
  \node[unshaded] (top) at (0,9) {};
  \draw[order] (bot) -- (fn) -- (topn) -- ($(topn)+(-0.6,0.6)$);
  \draw[order] (bot) -- (tn) -- (topn) -- ($(topn)+(0.6,0.6)$);
  \draw[order] ($(top2)+(-0.6,-0.6)$) -- (top2) -- (f1) -- (top1) -- (f0) -- (top);
  \draw[order] ($(top2)+(0.6,-0.6)$) -- (top2) -- (t1) -- (top1) -- (t0) -- (top);
  \node[label,anchor=west,yshift=-3pt] at (bot) {$\top_{\!n+1}=\bot$};
  \node[label,anchor=east] at (fn) {$\mbf_n$};
  \node[label,anchor=west] at (tn) {$\mbt_n$};
  \node[label,anchor=west,yshift=-1pt] at (topn) {$\top_{\!n}$};
  \node[label,anchor=west,yshift=-1pt] at (top2) {$\top_{\!2}$};
  \node[label,anchor=east] at (f1) {$\mbf_1$};
  \node[label,anchor=west] at (t1) {$\mbt_1$};
  \node[label,anchor=west,yshift=-1pt] at (top1) {$\top_{\!1}$};
  \node[label,anchor=east] at (f0) {$\mbf_0$};
  \node[label,anchor=west] at (t0) {$\mbt_0$};
  \node[label,anchor=west,yshift=2pt] at (top) {$\top_{\!0}=\top$};
\end{scope}
\begin{scope}[scale=1,xshift=8cm,yshift=2.25cm]
  \node[unshaded] (bot) at (0,0) {};
  \node[unshaded] (fn) at (-155:1) {};
  \node[unshaded] (tn) at (155:1) {};
  \node[unshaded] (topn) at ($0.5*(fn)+0.5*(tn)$) {};
  \node at (-2,0) {$\dots$};
  \node[unshaded] (f1) at (-155:3) {};
  \node[unshaded] (t1) at (155:3) {};
  \node[unshaded] (top1) at ($0.5*(f1)+0.5*(t1)$) {};
  \node[unshaded] (f0) at (-155:4) {};
  \node[unshaded] (t0) at (155:4) {};
  \node[unshaded] (top) at ($0.5*(f0)+0.5*(t0)$) {};
  \draw[order] (f0) -- (top) -- (t0);
  \draw[order] (f1) -- (top1) -- (t1);
  \draw[order] (fn) -- (topn) -- (tn);
  \draw[order] (bot) -- (tn) -- ($(tn)+(155:0.5)$);
  \draw[order] ($(t1)+(-25:0.5)$) -- (t1) -- (t0);
  \draw[order] (f0) -- (f1) -- ($(f1)+(25:0.5)$);
  \draw[order] ($(fn)+(-155:0.5)$) -- (fn) -- (bot);
  \node[label,anchor=west] at (bot) {$\top_{\!n+1}=\bot$};
  \node[label,anchor=north] at (fn) {$\mbf_n$};
  \node[label,anchor=south] at (tn) {$\mbt_n$};
  \node[label,anchor=east,xshift=2pt] at (topn) {$\top_{\!n}$};
  \node[label,anchor=north] at (f1) {$\mbf_1$};
  \node[label,anchor=south] at (t1) {$\mbt_1$};
  \node[label,anchor=east,xshift=2pt] at (top1) {$\top_{\!1}$};
  \node[label,anchor=north] at (f0) {$\mbf_0$};
  \node[label,anchor=south] at (t0) {$\mbt_0$};
  \node[label,anchor=east,xshift=2pt] at (top) {$\top_{\!0}=\top$};
\end{scope}
\end{tikzpicture}
\caption{$\K_n$ in its knowledge order (left) and truth order (right).}\label{fig:Kn-t-order}
\end{figure}

The paper is structured as follows. We define the family $\{\, \JB_n \mid n \in \omega\,\}$  of default bilattices in Section~\ref{sec:Jn}. There we not only describe the algebras themselves, but also derive some properties of
the variety $\CV_n$ generated by~$\JB_n$. In particular, we show that, up to isomorphism,  $\CV_n$ contains $n +1$ subdirectly irreducible members denoted by $\M_0, \dots, \M_n$, each of which is a homomorphic image of~$\JB_n$: the algebra $\M_0$ has size~4 and is term equivalent to $\FOUR$,  and the algebras $\M_1, \dots, \M_n$ have size~6.

At the beginning of Section~\ref{sec:NatDualities}, we note some existing duality and representation results for bilattices, before summarising
the necessary background from the theory of natural dualities as presented in the book by Clark and Davey~\cite{CD98}. We state restricted versions of more general theorems, as these are all that we require.
Section~\ref{sec:CJduality} is devoted to setting up and stating our first important duality result, the single-sorted duality for the quasivariety~$\CJ_n$ (Theorem~\ref{cor:bigduality}). The duality is optimal and, for $n\ge 1$, uses $\frac12(n^2 - n +4)$ relations.
The setup and statement of the multi-sorted duality for $\CV_n$ is in
Section~\ref{sec:CVduality} (Theorem~\ref{cor:bigmultiduality}). There are $n + 1$ sorts, one corresponding to each of the subdirectly irreducible algebras~$\M_k$, for $k\in \{0, \dots, n\}$. Again, the duality is optimal; for $n\ge 1$, it uses a total of $\frac12(n^2 + 3n + 2)$ relations and operations.

A consequence of the underlying lattice structure of our algebras is that the main tool for
proving the duality for the quasivariety $\CJ_n$ is a good description of the subuniverse lattice $\Sub(\JB_n^2)$.
In particular, the identification of the meet-irreducible elements of this subuniverse lattice is crucial. Similarly, the main tool for proving the multi-sorted duality for the variety $\CV_n$ is a good description of the meet-irreducible elements of $\Sub(\M_j \times \M_k)$,
for all $j, k \in \{0, \dots, n\}$. We achieve both of these tasks simultaneously in Section~\ref{sec:CompOnHoms} by studying
$\Sub(\A\times \B)$, where $\A$ and $\B$ are non-trivial homomorphic images of $\JB_n$.
The proofs of the duality theorems, and of their optimality, are given in Sections~\ref{sec:singleproof} to~\ref{sec:optmulti}.

In a follow-up paper, the authors will study the problem of axiomatising the dual categories, the process of translating from our duals to the Priestley duals of the underlying distributive lattices, and
will use the translation to examine the free algebras in $\CV_n$.

\section{The prioritised default bilattice $\JB_n$}\label{sec:Jn}

Most definitions related to bilattices
are originally due to Ginsberg~\cite{Gins88}.
These have evolved over time and in the literature there exists
some variation in notation and terminology.
Our presentation is close to that of Jung and Rivieccio~\cite{JR12}.

\begin{df} \label{def:prebilat}  A \defn{pre-bilattice} is an algebra $\B = \langle B; \otimes, \oplus, \wedge, \vee \rangle$ such that $\langle B; \otimes,\oplus \rangle$ and $\langle B; \wedge,\vee\rangle$ are lattices. We denote by $\le_\k$ the order associated with $\langle B; \otimes, \oplus\rangle$ and by $\le_\t$ the order associated with $\langle B; \wedge, \vee \rangle$.
\end{df}

The definition of a pre-bilattice does not require any kind of relationship
between the two lattice orders. Thus, with a change of signature,
any lattice $\langle L; \sqcap, \sqcup \rangle$ can be considered as
a pre-bilattice where each of $\le_\k$ and $\le_\t$ is either the original order $\sqsubseteq$ from $L$, or its dual $\sqsupseteq$.
Ginsberg's original definition~\cite[Definition 4.1]{Gins88} required that both of the lattices $\langle B; \otimes, \oplus \rangle$ and $\langle B; \wedge,\vee \rangle$ were complete. Recent authors seldom require completeness and our work does not make this requirement.

It is unsurprising that in some contexts there will be some interaction between the two orders.
A \emph{distributive} pre-bilattice $\B$ is one in which
$\bullet$ distributes over $\ast$, for all $\bullet,\ast \in \{\otimes,\oplus, \wedge,\vee\}$. When each set of operations preserves the other order,
i.e., $\otimes$ and $\oplus$ preserve $\le_\t$ and
$\wedge$ and $\vee$ preserve $\le_\k$, then the pre-bilattice is said to be \emph{interlaced}.

\begin{df} \label{def:bilat}  A \defn{bilattice} is an algebra $\B=\langle B; \otimes, \oplus,\wedge,\vee, \neg \rangle$ such that the reduct $\langle B;\otimes,\oplus,\wedge,\vee\rangle$ is a pre-bilattice
and $\neg$ is a unary operation which is $\le_\k$-preserving, $\le_\t$-reversing and involutive.
\end{df}

We note that some authors use the term `bilattice' and `bilattice with negation' to describe
the objects from Definition~\ref{def:prebilat} and~\ref{def:bilat}, respectively. When the lattices are bounded, the upper and lower
bounds of the knowledge order are denoted by $\top$ and $\bot$, and the
upper and lower bounds of the truth order are denoted by $\mbt$ and~$\mbf$.

Bilattices were studied intensively from their first description until the end of 1990's. In recent years there has been
a resurgence of interest from mathematicians, largely catalysed by the work of Rivieccio~\cite{Riv-thesis}.
In the wake of his thesis a number of papers have examined both algebraic and logical aspects of bilattices~\cite{BouR11,BouJanRiv,CPdbl}.
These recent investigations have extended to the related notions of twist structures~\cite{Riv-twist} and trilattices~\cite{CP-IGPL}.

We now define prioritised default bilattices $\JB_n$
which extend, to $n$-levels of default truth values, the motivation behind the six- and eight-element bilattices in Figure~\ref{fig:468}.
These bilattices were originally studied in the first author's DPhil thesis~\cite{C-thesis}.

\begin{df}\label{def:Jn}
For each $n\in \omega$, the underlying set of $\JB_n$ is
\[
J_n = \{ \top, \mbf_0, \dots, \mbf_n,\mbt_0, \dots, \mbt_n, \bot \}.
\]
The knowledge and truth orders, $\le_\k$ and $\le_\t$,
on $\JB_n$ are given in Figure~\ref{fig:Jn}. When necessary we will add a superscript and denote these orders by $\le_\k^n$ and~$\le_\t^n$.
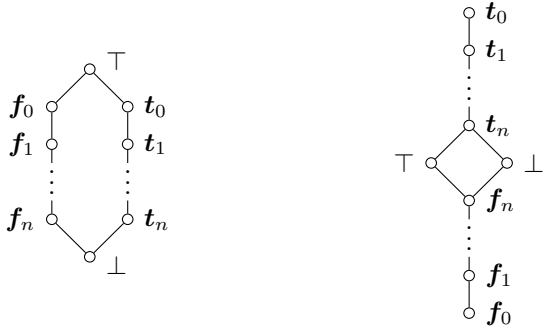
\begin{figure}[th]
\begin{tikzpicture}[scale=0.5]
\begin{scope}
  \node[unshaded] (bot) at (0,0) {};
  \node[unshaded] (fn) at (-1,1) {};
  \node[unshaded] (tn) at (1,1) {};
  \node at (-1,2.2) {$\vdots$};
  \node at (1,2.2) {$\vdots$};
  \node[unshaded] (f1) at (-1,3) {};
  \node[unshaded] (t1) at (1,3) {};
  \node[unshaded] (f0) at (-1,4) {};
  \node[unshaded] (t0) at (1,4) {};
  \node[unshaded] (top) at (0,5) {};
  \draw[order] (bot) -- (fn) -- ($(fn)+(0,0.5)$);
  \draw[order] (bot) -- (tn) -- ($(tn)+(0,0.5)$);
  \draw[order] ($(f1)+(0,-0.5)$) -- (f1) -- (f0) -- (top);
  \draw[order] ($(t1)+(0,-0.5)$) -- (t1) -- (t0) -- (top);
  \node[label,anchor=west,yshift=-3pt] at (bot) {$\bot$};
  \node[label,anchor=east] at (fn) {$\mbf_n$};
  \node[label,anchor=west] at (tn) {$\mbt_n$};
  \node[label,anchor=east] at (f1) {$\mbf_1$};
  \node[label,anchor=west] at (t1) {$\mbt_1$};
  \node[label,anchor=east] at (f0) {$\mbf_0$};
  \node[label,anchor=west] at (t0) {$\mbt_0$};
  \node[label,anchor=west,yshift=3pt] at (top) {$\top$};
\end{scope}
\begin{scope}[xshift=10cm,yshift=-1.5cm]
  \node[unshaded] (f0) at (0,0) {};
  \node[unshaded] (f1) at (0,1) {};
  \node at (0,2.2) {$\vdots$};
  \node[unshaded] (fn) at (0,3) {};
  \node[unshaded] (top) at (-1,4) {};
  \node[unshaded] (bot) at (1,4) {};
  \node[unshaded] (tn) at (0,5) {};
  \node at (0,6.2) {$\vdots$};
  \node[unshaded] (t1) at (0,7) {};
  \node[unshaded] (t0) at (0,8) {};
  \draw[order] (f0) -- (f1) -- ($(f1)+(0,0.5)$);
  \draw[order] ($(fn)+(0,-0.5)$) -- (fn) -- (top) -- (tn) -- ($(tn)+(0,0.5)$);
  \draw[order] (fn) -- (bot) -- (tn);
  \draw[order] ($(t1)+(0,-0.5)$) -- (t1) -- (t0);
  \node[label,anchor=west] at (bot) {$\bot$};
  \node[label,anchor=west] at (fn) {$\mbf_n$};
  \node[label,anchor=west] at (tn) {$\mbt_n$};
  \node[label,anchor=west] at (f1) {$\mbf_1$};
  \node[label,anchor=west] at (t1) {$\mbt_1$};
  \node[label,anchor=west] at (f0) {$\mbf_0$};
  \node[label,anchor=west] at (t0) {$\mbt_0$};
  \node[label,anchor=east] at (top) {$\top$};
\end{scope}
\end{tikzpicture}

\caption{The bilattice $\JB_n$  in  its knowledge order (left) and truth order (right).}\label{fig:Jn}
\end{figure}
A unary involutive operation $\neg$ that preserves the $\le_\k$-order
and reverses the $\le_\t$-order on $J_n$ is given by:
\[
\neg\top = \top, \quad \neg\bot = \bot, \quad  \neg\mbf_m = \mbt_m \text{ and } \neg\mbt_m = \mbf_m, \text{ for all } m\in \{0, \dots, n\}.
\]
We then add every element of $J_n$ as a constant to obtain the
prioritised default bilattice
\[
\JB_n = \langle J_n; \otimes, \oplus, \wedge,\vee, \neg, \top,
\mbf_0, \dots, \mbf_n,\mbt_0, \dots, \mbt_n, \bot \rangle,
\]
where $\otimes$ and $\oplus$ are greatest lower bound and least upper bound in the knowledge order $\le_\k$, and $\wedge$ and $\vee$ are greatest lower bound and least upper bound in the truth order $\le_\t$. To simplify the notation, we let
\[
\bF = \{\mbf_0,\mbf_1,\dots,\mbf_n\}\quad \text{and}\quad
\bT = \{\mbt_0,\mbt_1,\dots,\mbt_n\}.
\]
\end{df}

Note that $\JB_0$ is isomorphic to
Belnap's four-element bilattice, $\FOUR$,  and
the bilattice $\JB_n$ generalises Belnap's bilattice by taking the truth values
$\mbf$ and $\mbt$ and expanding them to create a chain of truth values in each of their places. Moreover, as the following simple proposition shows, $\JB_n$ has a homomorphic image that is term equivalent to Belnap's bilattice.

Let $\JB_{0,n}$ be an algebra in the signature of $\JB_n$ which has $\langle J_0; \otimes, \oplus, \wedge, \vee, \neg\rangle$ as its
bilattice reduct and in which the additional
constants $\mbf_1,\ldots,\mbf_n$ take the value~$\mbf_0$ and the additional
constants $\mbt_1, \ldots, \mbt_n$ take the value~$\mbt_0$. 
Clearly, $\JB_{0,n}$ is term equivalent to $\JB_0$. The following observation is immediate.

\begin{prop}\label{prop:hnn}
For all $n\in \omega$, the equivalence relation $\theta$ with blocks
$\{\top\}$, $\bF$, $\bT$ and $\{\bot\}$
is a congruence on $\JB_n$ with $\JB_n/{\theta} \cong \JB_{0,n}$. Hence $\JB_{0,n}$ is a homomorphic image of $\JB_n$.
\end{prop}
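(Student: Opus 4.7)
The plan is to check compatibility of each fundamental operation of $\JB_n$ with the four-block partition $\theta$, and then exhibit the isomorphism explicitly. The nullaries are trivial: $\top$ and $\bot$ form singleton blocks, while the remaining constants all lie inside $\bF$ or $\bT$. The unary $\neg$ preserves $\theta$ because $\neg\top = \top$, $\neg\bot = \bot$, and $\neg$ interchanges $\bF$ and $\bT$ elementwise.

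For the binary operations I would work block by block using Figure~\ref{fig:Jn}. In the knowledge order both $\bF$ and $\bT$ are chains with $\top$ on top and $\bot$ at the bottom, giving $\mbf_i \otimes \mbf_j = \mbf_{\max(i,j)}$, $\mbf_i \oplus \mbf_j = \mbf_{\min(i,j)}$ (and similarly for $\bT$), together with the extremal identities $\top \otimes x = x$, $\bot \oplus x = x$, $\top \oplus x = \top$, $\bot \otimes x = \bot$, and the key mixed cases $\mbf_i \otimes \mbt_j = \bot$ and $\mbf_i \oplus \mbt_j = \top$ already highlighted in the introduction. For the truth order, the chains are $\mbf_0 <_\t \cdots <_\t \mbf_n$ and $\mbt_n <_\t \cdots <_\t \mbt_0$, with $\top$ and $\bot$ incomparable in the middle; direct inspection yields $\mbf_i \wedge \mbf_j = \mbf_{\min(i,j)}$, $\mbt_i \wedge \mbt_j = \mbt_{\max(i,j)}$, $\mbf_i \wedge \mbt_j = \mbf_i$, and the interactions involving $\top$ and $\bot$ are routine, the only mildly subtle points being $\top \wedge \bot = \mbf_n \in \bF$ and, dually, $\top \vee \bot = \mbt_n \in \bT$. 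In every case the output lies in the block determined by the blocks of the inputs, establishing that $\theta$ is a congruence.

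For the isomorphism, I would define $\phi\colon \JB_n/\theta \to \JB_{0,n}$ by $\{\top\} \mapsto \top$, $\bF \mapsto \mbf_0$, $\bT \mapsto \mbt_0$, $\{\bot\} \mapsto \bot$. This bijection matches the operation tables computed above with those of the bilattice reduct of $\JB_{0,n}$ (which is term-equivalent to $\FOUR$), and the remaining constant symbols agree because in $\JB_{0,n}$ every $\mbf_i$ is interpreted as $\mbf_0$ and every $\mbt_i$ as $\mbt_0$. Hence $\phi$ is an isomorphism, which gives $\JB_{0,n}$ as a homomorphic image of $\JB_n$. There is no substantive obstacle; the verification is essentially mechanical, with the only points worth noting being the two truth-order meets $\top \wedge \bot = \mbf_n$ and $\top \vee \bot = \mbt_n$ that fall out of the Hasse diagram.
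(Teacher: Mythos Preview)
Your verification is correct. The paper itself gives no proof of this proposition---it simply states that ``the following observation is immediate''---so your careful block-by-block check is more than the paper offers and is entirely in the spirit of what is intended. The only comment worth making is that you have done more work than strictly necessary: once one observes that the partition $\theta$ is the kernel of the obvious surjection $J_n \to J_0$ sending each $\mbf_i$ to $\mbf_0$ and each $\mbt_i$ to $\mbt_0$, the congruence property reduces to checking that this surjection is a bilattice homomorphism, which amounts to the same case analysis but organised slightly more efficiently. Either way, the argument is routine and your proposal is sound.
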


We close this section with some remarks about the congruence lattice of $\JB_n$ and the structure of the variety generated by~$\JB_n$.

\goodbreak

\begin{lem} \label{lem:ConJn}
Let $n\in \omega$.
\begin{enumerate}[\normalfont(1)]

\item Let $\theta$ be an equivalence relation obtained by independently collapsing any collection of the pairs
$(\mbf_0, \mbf_1), \dots, (\mbf_{n-1}, \mbf_n)$,
and the corresponding pairs in $\bT$, and collapsing no other elements of~$J_n$. Then $\theta$ is a congruence on~$\JB_n$. Moreover, every non-trivial congruence on $\JB_n$ arises this way.

\item
$\Con(\JB_n) \cong \two^n \oplus \one$ \textup(i.e., $\two^n$ with a new top adjoined\textup).
\end{enumerate}
\end{lem}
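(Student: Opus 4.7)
My plan is to prove (1) first by direct verification in both directions, then deduce (2) from the resulting description of the congruences.

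For the forward direction of (1), for each $S\subseteq\{0,\dots,n-1\}$ I would define $\theta_S$ to be the equivalence relation on $J_n$ whose non-singleton blocks are the intervals of $\bF$ and $\bT$ corresponding to $S$ (more precisely, $\mbf_i$ and $\mbf_{i+1}$ lie in the same block iff $i\in S$, and likewise for the $\mbt$'s), with $\{\top\}$ and $\{\bot\}$ as singletons. To show $\theta_S$ is a congruence I would verify compatibility with each operation using the tables read off from Figure~\ref{fig:Jn}: operations on two $\mbf$'s (respectively two $\mbt$'s) produce an $\mbf$ (resp.\ $\mbt$) whose index is the $\min$ or $\max$ of the operands' indices, so the output stays in the interval generated; the $\oplus$ and $\otimes$ of a $\mbf$ with a $\mbt$ always equal $\top$ and $\bot$ respectively; the $\vee$ and $\wedge$ of a $\mbf$ with a $\mbt$ always return one of the two inputs; operations with $\top$ or $\bot$ return a constant or a single input; and $\neg$ swaps the corresponding blocks of $\bF$ and $\bT$. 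Compatibility therefore reduces to the observation that interval partitions of a finite chain are closed under $\min$ and $\max$.

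For the converse, let $\theta$ be any congruence on $\JB_n$ other than the full relation $\nabla := J_n\times J_n$, and I would reduce every possible identification to the critical case $\top\sim_\theta\bot$, then exclude it. Since $\top\oplus x=\top$ and $\bot\oplus x=x$ for every $x\in J_n$, the relation $\top\sim\bot$ immediately forces $\theta=\nabla$; so $\top\not\sim\bot$. Next, if $\top\sim_\theta \mbf_i$ for some $i$, then $\top=\neg\top\sim\neg\mbf_i=\mbt_i$, giving $\mbf_i\sim\mbt_i$, and then $\mbf_i=\mbf_i\otimes\mbf_i\sim\mbf_i\otimes\mbt_i=\bot$, yielding the forbidden $\top\sim\bot$. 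Symmetric arguments rule out $\top\sim\mbt_i$, $\bot\sim\mbf_i$ and $\bot\sim\mbt_i$. Finally, $\mbf_i\sim_\theta\mbt_j$ is impossible too: applying $\otimes\mbt_i$ gives $\bot=\mbf_i\otimes\mbt_i\sim\mbt_j\otimes\mbt_i=\mbt_{\max(i,j)}$, already excluded. Thus $\top,\bot$ are singleton blocks, $\bF$ and $\bT$ are $\theta$-saturated, and each block within $\bF$ is an interval of the chain: if $\mbf_i\sim\mbf_j$ with $i<k<j$, then $\mbf_k=\mbf_i\vee\mbf_k\sim\mbf_j\vee\mbf_k=\mbf_j$. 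Moreover $\neg$ forces the partitions of $\bF$ and $\bT$ to correspond, so with $S=\{\,i : \mbf_i\sim_\theta\mbf_{i+1}\,\}$ we have $\theta=\theta_S$.

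For (2), the map $S\mapsto\theta_S$ is a bijection from $\powerset(\{0,\dots,n-1\})$ onto the set of congruences distinct from $\nabla$ and it preserves inclusion, hence is a lattice isomorphism between $\two^n$ and this subset of $\Con(\JB_n)$. Adjoining $\nabla$, which strictly contains every $\theta_S$ and is distinct from each of them (it identifies $\top$ with $\bot$), as a new top element yields $\Con(\JB_n)\cong\two^n\oplus\one$. The main obstacle is the case analysis in the converse: one must systematically trace every forbidden identification through $\neg$, $\otimes$ and $\wedge$ to the trap $\top\sim\bot$. The bookkeeping is delicate, but as sketched above each case collapses in a handful of steps because every cross-identification produces an element at a knowledge extreme, which then propagates through the mixing of the knowledge and truth orders.
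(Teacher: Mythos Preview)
Your proof is correct and follows essentially the same strategy as the paper's: verify directly that each $\theta_S$ is a congruence, then show that any non-trivial congruence must have $\{\top\}$ and $\{\bot\}$ as singleton blocks, after which the interval structure on $\bF$ and $\bT$ and the action of $\neg$ force the described form. The only notable difference is economy: where you run a case analysis reducing each forbidden identification to the trap $\top\sim\bot$, the paper handles all of them at once with the single observation that if $c\equiv_\alpha\top$ for some $c\ne\top$, then $\bot = c\otimes\neg c \equiv_\alpha \top\otimes\neg\top = \top$. Conversely, the paper leaves the ``it is easily seen'' step (that once $\top$ and $\bot$ are isolated, the congruence has the required shape) to the reader, whereas you spell out the interval argument and the role of $\neg$ explicitly; so your version trades brevity for completeness.
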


\begin{proof}
We prove only (1) as (2) is an immediate consequence. It is clear that $\theta$ is a congruence on~$\JB_n$. Now let $\alpha$ be a congruence on~$\JB_n$. It is easily seen that if $\top/{\alpha} = \{\top\}$ and $\bot/{\alpha} = \{\bot\}$, then $\alpha$ is of the form described.
It remains to prove that if $\top/{\alpha} \ne \{\top\}$, then $\alpha = J_n^2$ (the other case follows by duality).
Assume that $c\in J_n\comp\{\top\}$ with $c \equiv_\alpha \top$. If $c = \bot$, then we are done, so we may assume that $c\notin \{\top, \bot\}$.
Hence $\bot = c \otimes \neg c \equiv_\alpha \top \otimes \neg\top = \top \otimes \top =\top$,
and again we are done.
\end{proof}

Let $n\in \omega\setminus\{0\}$ and let $k\in \{1, \dots, n\}$. Define $\JB_{1,n,k}$ to be the algebra in the signature of $\JB_n$ that has bilattice reduct $\langle \{\top, \zero, \mbf, \one, \mbt, \bot\};
\otimes, \oplus, \wedge, \vee, \neg\rangle$ isomorphic to the bilattice reduct of~$\JB_1$, as shown in Figure~\ref{fig:J1nm}.

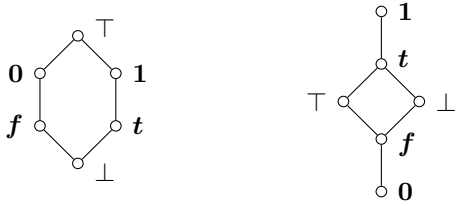
\begin{figure}[ht]
\centering
\begin{tikzpicture}
\begin{scope}[scale=0.5]
  \node[unshaded] (bot) at (0,0) {};
  \node[unshaded] (f) at (-1,1) {};
  \node[unshaded] (t) at (1,1) {};
  \node[unshaded] (0) at (-1,2.4) {};
  \node[unshaded] (1) at (1,2.4) {};
  \node[unshaded] (top) at (0,3.4) {};
  \draw[order] (bot) -- (f) -- (0) -- (top);
  \draw[order] (bot) -- (t) -- (1) -- (top);
  \node[label,anchor=west,yshift=-3pt] at (bot) {$\bot$};
  \node[label,anchor=east] at (f) {$\mbf$};
  \node[label,anchor=west] at (t) {$\mbt$};
  \node[label,anchor=east] at (0) {$\zero$};
  \node[label,anchor=west] at (1) {$\one$};
  \node[label,anchor=west,yshift=3pt] at (top) {$\top$};
\end{scope}
\begin{scope}[scale=0.5,xshift=8cm,yshift=-0.75cm]
  \node[unshaded] (0) at (0,0) {};
  \node[unshaded] (f) at (0,1.4) {};
  \node[unshaded] (top) at (-1,2.4) {};
  \node[unshaded] (bot) at (1,2.4) {};
  \node[unshaded] (t) at (0,3.4) {};
  \node[unshaded] (1) at (0,4.8) {};
  \draw[order] (0) -- (f) -- (top) -- (t) -- (1);
  \draw[order] (f) -- (bot) -- (t);
  \node[label,anchor=west] at (bot) {$\bot$};
  \node[label,anchor=west,yshift=-2pt] at (f) {$\mbf$};
  \node[label,anchor=west,yshift=2pt] at (t) {$\mbt$};
  \node[label,anchor=west] at (0) {$\zero$};
  \node[label,anchor=west] at (1) {$\one$};
  \node[label,anchor=east] at (top) {$\top$};
\end{scope}
\end{tikzpicture}
\caption{The  bilattice reduct of $\JB_{1,n,k}$.}\label{fig:J1nm}
\end{figure}
In $\JB_{1,n,k}$, the constants $\mbf_0,\ldots,\mbf_{k-1}$ take the value $\zero$  and $\mbf_k, \ldots, \mbf_n$ take the value~$\mbf$, while the constants $\mbt_0,\ldots,\mbt_{k-1}$ take the value~$\one$  and $\mbt_k, \ldots, \mbt_n$ take the value~$\mbt$. 
Clearly, $\JB_{1,n,k}$ is term equivalent to $\JB_1$.
Let $\theta_k$ be the equivalence relation on $J_n$ with blocks
\[
\{\top\}, \ \ \{\mbf_0, \dots, \mbf_{k-1}\}, \ \  \{\mbf_k, \dots, \mbf_n\}, \ \  \{\mbt_0, \dots, \mbt_{k-1}\}, \ \  \{\mbt_k, \dots, \mbt_n\}, \ \  \{\bot\}.
\]
By Lemma~\ref{lem:ConJn}, the relation $\theta_k$ is a congruence on $\JB_n$. Clearly, $\JB_n/{\theta_k} \cong \JB_{1,n,k}$. Hence $\JB_{1,n,k}$ is a homomorphic image of $\JB_n$.

For all $n\in \omega$, let $\CV_n =\HSP(\JB_n)$ be the variety generated by $\JB_n$.

\begin{prop}\label{prop:thevariety}\
\begin{enumerate}[\normalfont(1)]

\item Up to isomorphism, the only subdirectly irreducible algebra in the variety $\CV_0$ is $\JB_0$ itself.

\item Let $n\in \omega\setminus\{0\}$. Up to isomorphism,
the variety $\CV_n$ contains $n+1$ subdirectly irreducible algebras, the four-element algebra $\JB_{0,n}$ and the six-element algebras $\JB_{1,n,k}$, for $k\in \{1, \dots, n\}$.

\item The algebras $\JB_{0,n}$ and $\JB_{1,n,k}$, for $k\in \{1, \dots, n\}$, are injective in $\CV_n$.

\item Every algebra in $\CV_n$ embeds into an injective algebra in $\CV_n$.

\item The variety $\CV_n$ has the congruence extension property and the amalgamation property.

\end{enumerate}

\end{prop}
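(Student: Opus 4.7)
The plan is to dispatch claims~(1) and~(2) directly from J\'onsson's Lemma and the congruence structure given by Lemma~\ref{lem:ConJn}, and then to treat (3)--(5) as a single package: prove injectivity of the subdirectly irreducibles in~(3), and derive (4) and~(5) by standard arguments for finitely generated congruence distributive varieties.

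For~(1) and~(2), observe that every member of $\CV_n$ has a distributive lattice reduct, so $\CV_n$ is congruence distributive. J\'onsson's Lemma therefore places every subdirectly irreducible algebra of $\CV_n$ in $\mathsf{HS}(\JB_n)$. Because every element of $\JB_n$ is nominated as a constant, $\JB_n$ has no proper subalgebras, so the subdirectly irreducibles are exactly the quotients $\JB_n/\theta$ for which $\theta$ is completely meet-irreducible and strictly below $\nabla$ in $\Con(\JB_n)$. Lemma~\ref{lem:ConJn} identifies $\Con(\JB_n) \cong \two^n\oplus\one$; its completely meet-irreducible elements strictly below the top are the top of $\two^n$ (yielding $\JB_{0,n}$, term equivalent to $\JB_0$) together with the $n$ coatoms of $\two^n$ (yielding $\JB_{1,n,k}$, term equivalent to $\JB_1$, for $k \in \{1,\dots,n\}$). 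For $n = 0$ only $\JB_0$ survives. For $n \ge 1$ each of the $n+1$ quotients is subdirectly irreducible, since $\JB_0$ is simple and $\Con(\JB_1)$ is a three-element chain with a unique atom.

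For~(3), fix a subdirectly irreducible $\M \in \CV_n$, an embedding $\A \hookrightarrow \B$ in $\CV_n$, and a homomorphism $f\colon \A \to \M$. Because $\M$ has each of its elements nominated as a constant, the induced map $\A/{\ker f} \to \M$ is an embedding, so the extension problem reduces to finding a congruence $\tilde\theta \in \Con(\B)$ with $\tilde\theta \cap (A\times A) = \ker f$ such that $\B/\tilde\theta$ embeds compatibly into~$\M$. This is a congruence-extension problem that I would resolve using the detailed description of the meet-irreducible elements of $\Sub(\A\times\B)$ for non-trivial homomorphic images $\A, \B$ of $\JB_n$; this description is precisely what Section~\ref{sec:CompOnHoms} sets out to obtain. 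With~(3) in hand, claim~(4) follows at once from Birkhoff's subdirect representation theorem combined with the fact that direct products of injective algebras are injective: every algebra in $\CV_n$ embeds subdirectly into a product of subdirectly irreducibles, each of which is injective. For~(5), CEP is immediate from~(4): given $\A \le \B$ and $\alpha \in \Con(\A)$, embed $\A/\alpha$ into an injective $\mathbf I \in \CV_n$, extend the composition $\A \to \A/\alpha \hookrightarrow \mathbf I$ along $\A \le \B$ to $g\colon \B \to \mathbf I$, and observe that $\ker g$ restricts to~$\alpha$ on~$\A$. Amalgamation then follows from~(3) and CEP by a standard pushout argument for congruence distributive varieties whose subdirectly irreducibles are injective.

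The principal obstacle is~(3). Claims (1), (2), (4) and~(5) unfold routinely once injectivity of the $\JB_{0,n}$ and $\JB_{1,n,k}$ is known, but proving that injectivity rests on the combinatorial description of $\Sub(\A\times\B)$ for homomorphic images $\A, \B$ of~$\JB_n$ that forms the technical heart of Section~\ref{sec:CompOnHoms}.
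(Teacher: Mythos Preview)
Your handling of (1), (2), (4), and (5) is correct and aligns with the paper: J\'onsson's Lemma plus $\Con(\JB_n)\cong\two^n\oplus\one$ gives the list of subdirectly irreducibles, and the passage from (3) to (4) and (5) via subdirect representation and standard facts about injectives is exactly what the paper does (citing Taylor~\cite{T72}).

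The divergence is in (3), and here your plan is both heavier than needed and contains a gap. You propose to solve the extension problem for an arbitrary embedding $\A\hookrightarrow\B$ in $\CV_n$ by invoking the description of $\Sub(\A\times\B)$ from Section~\ref{sec:CompOnHoms}; but that section only analyses products of non-trivial \emph{homomorphic images of $\JB_n$}, i.e.\ of the specific small algebras $\M_0,\dots,\M_n$, not products of arbitrary members of $\CV_n$, so it does not apply to your reduction as stated. The paper avoids this entirely with a one-line observation: since every element of each subdirectly irreducible $\M$ is the value of a constant, $\M$ has no proper subalgebras, and hence any embedding between two subdirectly irreducibles of $\CV_n$ is an isomorphism. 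This makes each $\M$ trivially injective \emph{within the class of subdirectly irreducibles}, and a result of Davey~\cite[Corollary~2.3]{D77} then promotes this to injectivity in the whole variety. No forward reference to Section~\ref{sec:CompOnHoms} is required, and indeed the proposition is placed before that section precisely because its proof does not depend on it.
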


\begin{proof}
Since $\CV_n$ is congruence distributive, a simple application of
J\'onsson's Lemma~\cite[Cor.~3.4]{Jon} tells us that the
subdirectly irreducible algebras in $\CV_n$ are the subdirectly irreducible homomorphic images of~$\JB_n$. We know from Lemma~\ref{lem:ConJn} that $\JB_n$ has $n+1$ meet-irreducible congruences: the unique coatom and its $n$ lower covers. The corresponding subdirectly irreducible quotients of $\JB_n$ are $\JB_{0,n}$ and $\JB_{1,n,k}$, for $k\in \{1, \dots, n\}$.

Since each of these subdirectly irreducible algebras has no proper subalgebras, it is clear that each is injective in the class of subdirectly irreducible algebras in $\CV_n$, and hence each is injective in the variety~$\CV_n$---see Davey~\cite[Corollary 2.3]{D77}. Consequently, every algebra in $\CV_n$ embeds into an injective algebra, from which it follows that $\CV_n$ satisfies both the congruence extension property and the amalgamation property. (See, for example, Taylor~\cite[Theorem 2.3]{T72}.)
\end{proof}

\section{Natural dualities}\label{sec:NatDualities}
It is important to note that a product representation theorem exists for both
distributive bilattices~\cite[Proposition 8]{Fit90} and interlaced pre-bilattices.
(See Davey~\cite{D-PR} for a full historical account.) These representations have been used extensively in the study of bilattices and pre-bilattices.
Duality and representation theorems for bilattices have largely focussed
on product representations. Mobasher, Pigozzi, Slutzki and
Voutsadakis~\cite{Mob00} used the product representation of distributive
bilattices to show that the category of distributive bilattices and the category of
Priestley spaces are dually equivalent. Jung and Rivieccio~\cite{JR12} defined Priestley bispaces and showed that this
new category is dually equivalent to the category
of distributive bilattices. For $n\geqslant 1$, the bilattice $\JB_n$ is not interlaced:
indeed, $\mbf_0 \le_\k \top$ but $\mbf_0 \wedge \bot = \mbf_0 \nleqslant_\k \mbf_n = \top \wedge \bot$.
Hence we are not able to use a product representation to study either the variety or the quasivariety generated by~$\JB_n$.
We will turn to natural duality theory in order to study this new class of default bilattices.

In its simplest form, the theory of natural dualities concerns quasivarieties $\CA =\ISP(\M)$ of algebras generated by a finite algebra~$\M$.
We can always find a discretely topologised structure $\MT$ with the same underlying set $M$ as the algebra $\M$ such that there is a dual adjunction between the quasivariety $\CA$ and the `topological quasivariety' $\CX = \IScP(\MT)$ of topological structures generated by~$\MT$.
As the class operators indicate, the objects of the category $\CX$ are the
isomorphic copies of closed substructures of non-zero powers of the generating structure~$\MT$.
The morphisms of $\CA$ and $\CX$ \emph{qua} categories are all possible homomorphisms and all possible continuous structure-preserving maps, respectively.
The aim is to find a structure $\MT$ such that $\CA$ is dually equivalent to a full subcategory of $\CX$ (\emph{duality}), or better still dually equivalent to $\CX$ itself (\emph{full duality})---see below for the formal definitions.

Two examples of such natural dualities are Stone duality for
Boolean algebras and Priestley duality for distributive lattices.
In both cases the algebra $\M$ has underlying set $\{ 0,1\}$.
In the Boolean case, $\MT$~is the set $\{ 0,1\}$ equipped with just the discrete topology
and no operations nor relations. In the case of  distributive lattices, $\MT$
is $\{ 0,1\}$ equipped with the discrete topology, two constants $0$ and $1$,
and the usual order relation~$\le$, with $0 < 1$.

Let $\M$ be a finite algebra. We search for structures $\MT = \langle  M; \mathcal G, \mathcal H, \mathcal R, \T \rangle$, where $\T$ is the discrete topology and $\mathcal G$, $\mathcal H$ and $\mathcal R$ are sets of finitary operations, partial operations and  relations, respectively, such that the relations in $\mathcal R$
and the graphs of the (partial) operations in $\mathcal G \cup \mathcal H$ are non-empty subuniverses
of  finite powers of~$\M$. If this is the case, we say that the operations, partial operations and relations are \defn{compatible with}~$\M$ (or \defn{algebraic over}~$\M$). We also say that the structure $\MT$ is \defn{compatible with}~$\M$ (or \defn{algebraic over}~$\M$). The structure $\MT$ is referred to as an \defn{alter ego} of~$\M$.

Given an alter ego $\MT$ of $\M$, there is a natural method to obtain a
dual adjunction between $\CA$ and~$\CX$. We denote by $\CA(\A,\M)$ the
set of all $\CA$-homomorphisms from $\A$ into $\M$, and by $\CX(\X,\MT)$ the set of all
continuous structure-preserving maps from $\X$ into $\MT$.
The dually adjoint hom-functors $\mathrm{D}\colon\CA\to\CX$ and $\mathrm{E} \colon \CX \to \CA$ are defined at both the object- and morphism-levels
below.
\begin{alignat*}{2}
& \mathrm{D} \colon \CA \to \CX, \mathrm{D}(\A) := \CA(\A,\M) 
&&(\text{as a closed substructure of the
power } \MT^A) \\
& \mathrm{D} \colon \CA(\A,\B) \to \CX(\mathrm{D}(\B),\mathrm{D}(\A)), \ &&\mathrm{D}(u)(x):= x \circ u, \text{ for } x \in \mathrm{D}(\B). \end{alignat*}
The structure on $\MT = \langle M; \mathcal G,\mathcal H,\mathcal R, \T \rangle$ is extended pointwise to $\CA(\A,\M)$.
For example, if $R$ is an $n$-ary relation in $\mathcal R$, then for $x_1, \dots, x_n \in \CA(\A,\M)$, we have $(x_1, \ldots, x_n) \in R^{\mathrm{D}(\A)}$ if and only if
$(x_1(a), \ldots,x_n(a)) \in R$, for all $a \in A$.)
\begin{alignat*}{2}
&\mathrm{E} \colon \CX \to \CA, \quad \mathrm{E}(\X) := \CX(\X,\MT) \ &&(\text{as a subalgebra of the power
}\M^X) \\
&\mathrm{E} \colon \CX(\X,\Y) \to \CA(\mathrm{E}(\Y),\mathrm{E}(\X)), &&\mathrm{E}(\varphi)(\alpha) := \alpha \circ \varphi, \:\text{ for } \alpha \in \mathrm{E}(\Y). 
\end{alignat*}
The Pre-duality Theorem~\cite[Theorem 1.5.2]{CD98} confirms that these functors are
well defined. What is important here is the fact that the operations, partial operations and relations in $\mathcal G\cup \mathcal H \cup \mathcal R$ are compatible with $\M$.

Given the above setup, we can define embeddings
$e_\A \colon \A \to \mathrm{ED}(\A)$ and $\varepsilon_\X \colon \X \to \mathrm{DE}(\X)$ by
{\allowdisplaybreaks\begin{align*}
e_\A(a)(x)&=x(a), \quad \text{for }a \in A \text{ and } x \in \mathrm{D}(\A), \\
\varepsilon_\X(x)(\alpha)&=\alpha(x), \quad \text{for }x \in X \text{ and } \alpha \in \mathrm{E}(\X).
\end{align*}}
We say that $\MT$ yields a \defn{duality} on $\CA$ (or that $\mathcal G\cup \mathcal H\cup \mathcal R$ yields a \defn{duality} on $\CA$)  if, for every $\A \in \CA$, the embedding $e_\A$ is an isomorphism. We say that $\MT$ yields a \defn{full duality} on $\CA$ if $\MT$ yields a duality on $\CA$ and, for every $\X\in\CX$, the embedding $\varepsilon_\X$ is an isomorphism. If $\MT$ yields a full duality on $\CA$ and $\MT$ is injective in the category $\CX$, then $\MT$ is said to yield a \defn{strong duality} on $\CA$.

When $\M$ is a finite lattice-based algebra, we are able to apply a very powerful theorem to help us find an appropriate dualising
structure $\MT$. The NU Duality Theorem~\cite[Theorem 2.3.4]{CD98} is in fact much more general than the statement given below, but
this special case will be sufficient for our needs. Note that, since $\M$ is lattice based, it has a ternary NU term, namely the lattice median.

\begin{thm}
[\textbf{Special NU Duality Theorem}]\label{NUDT}
Let $\M$ be a finite lattice-based
algebra and let $\mathcal R_\M$ denote the set of non-empty subuniverses of~$\M^2$. Then  $\MT = \langle M; \mathcal R_\M, \T \rangle$ yields a duality on  $\ISP(\M)$.
\end{thm}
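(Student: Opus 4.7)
The plan is to derive this statement as an immediate specialisation of the general NU Duality Theorem \cite[Thm.~2.3.4]{CD98}. That general theorem says the following: if a finite algebra $\M$ admits a $(d+1)$-ary near-unanimity term, then the alter ego consisting of $M$ with the discrete topology together with all non-empty subuniverses of $\M^k$ for $1 \leqslant k \leqslant d$ yields a duality on $\ISP(\M)$.

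Since $\M$ is lattice-based, the lattice median $m(x,y,z) = (x\wedge y) \vee (y\wedge z) \vee (x\wedge z)$ is a term of $\M$ and satisfies $m(x,x,y) = m(x,y,x) = m(y,x,x) = x$, making it a ternary near-unanimity term. Applying the general theorem with $d = 2$, I obtain a duality on $\ISP(\M)$ from the alter ego carrying the non-empty subuniverses of both $\M$ and $\M^2$.

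To match the statement as given, I need to observe that the unary subuniverses are redundant in the presence of the binary ones: if $S$ is a non-empty subuniverse of $\M$, then the diagonal $\Delta_S = \{(s,s) \mid s \in S\}$ is a non-empty subuniverse of $\M^2$, and preservation of $\Delta_S$ by a continuous structure-preserving map encodes exactly the same constraint as preservation of the unary relation $S$. Hence the alter ego $\MT = \langle M; \mathcal R_\M, \T \rangle$ from the statement yields the same duality on $\ISP(\M)$ as the richer alter ego produced by the general theorem.

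The only genuine obstacle is the proof of \cite[Thm.~2.3.4]{CD98} itself, whose engine is an interpolation argument that uses Zorn's Lemma together with the ternary near-unanimity term to paste pairwise-compatible partial homomorphisms into a global one. Since that work is carried out once and for all in \cite{CD98}, no further effort is required for the lattice-based specialisation stated here.
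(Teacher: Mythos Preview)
Your proposal is correct and matches the paper's treatment exactly: the paper does not prove this theorem but simply states it as the specialisation of \cite[Theorem~2.3.4]{CD98} obtained by observing that a lattice-based algebra has the lattice median as a ternary near-unanimity term. Your additional remark about unary subuniverses being absorbed via diagonals is a harmless clarification not present in the paper.
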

In general, the set $\Sub(\M^2)$ of subuniverses of $\M^2$
can be extremely large, even when $\M$ is a small algebra.
For example,
computer calculations reveal that $|\Sub(\JB_3^2)|=200$.
Although we are guaranteed a duality via the entire set $\mathcal R_\M$
of compatible binary relations, we want to reduce the size of the set of relations, ideally to some minimal set.

The first application of natural duality to bilattices was
by Cabrer and Priestley~\cite{CPdbl}, who looked at both bounded and unbounded distributive bilattices. They showed that the knowledge order alone yields a duality on the class $\ISP(\JB_0)$ of  bounded distributive bilattices.
(Except for $\JB_0$,
in our class of bilattices the truth operations do not preserve $\le_\k$,
and hence $\le_\k$ is not a compatible relation and cannot be used in the alter ego.)

\begin{thm}[{\cite[Theorem 4.2]{CPdbl}}]\label{thm:lekdualfour}
Consider the four-element bilattice
\[
\JB_0 = \langle \{\top,\mbf_0,\mbt_0,\bot\}; \otimes, \oplus,\wedge,\vee, \neg, \top, \mbf_0,\mbt_0,\bot\rangle \cong \FOUR.
\]
The alter ego
\[
\JT_0 = \langle \{\top,\mbf_0,\mbt_0,\bot\}; \le_\k, \T \rangle
\]
yields a strong,
and therefore full, duality on $\CV_0 = \ISP(\JB_0)$.
\end{thm}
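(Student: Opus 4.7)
The plan is to work in three stages: compatibility of the dualising relation, the duality itself, and the promotion to a strong duality.

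First, one verifies that $\le_\k$ is a subuniverse of $\JB_0^2$ by checking that every basic operation of $\JB_0$ preserves it. The operations $\otimes$ and $\oplus$ preserve $\le_\k$ tautologically (it is their induced order); $\neg$ preserves $\le_\k$ by the definition of a bilattice; the constants preserve every relation; and $\wedge$ and $\vee$ preserve $\le_\k$ because $\JB_0$ is interlaced, a fact one reads off directly from Figure~\ref{fig:FOUR}. Hence $\JT_0$ is a legitimate alter ego of $\JB_0$.

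Second, to obtain the duality I would apply the Special NU Duality Theorem (Theorem~\ref{NUDT}): since $\JB_0$ is lattice-based, the alter ego carrying all non-empty binary subuniverses $\mathcal R_{\JB_0}$ dualises $\CV_0 = \ISP(\JB_0)$. The substance of the proof is then a reduction: every relation $R \in \mathcal R_{\JB_0}$ must be shown to be entailed by $\le_\k$ (equivalently, primitive-positive definable from $\le_\k$ together with equality), so that the smaller alter ego $\JT_0$ still yields a duality. The observation that facilitates the reduction is that $(\{\top, \mbf_0, \mbt_0, \bot\}; \le_\k)$ is the 4-element Boolean lattice, and because every element of $J_0$ appears as a constant in the signature of $\JB_0$, each homomorphism $x\colon \A \to \JB_0$ is determined by the pair $(x^{-1}(\top), x^{-1}(\bot))$, which function as a prime filter and a prime ideal. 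This places us in a Priestley-style setting in which $\le_\k$ alone suffices to recover $\A$ from $\CV_0(\A, \JB_0)$, exactly as the order on the two-element chain does in classical Priestley duality.

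Third, for strongness I would invoke the standard criterion that it suffices to verify that $\JT_0$ is injective in the dual category $\CX = \IScP(\JT_0)$; see Clark and Davey~\cite{CD98}. Since $\JT_0$ is a finite Priestley-like structure whose order is a 4-element Boolean lattice, it is a retract of a finite power of the 2-element chain in the Priestley category, and this retraction transports through the equivalence to yield the required injectivity of $\JT_0$ in $\CX$; strongness then implies fullness automatically.

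The main obstacle lies in the second stage: demonstrating that $\le_\k$ alone primitive-positively generates every compatible binary relation on $\JB_0$. This requires either direct enumeration of $\Sub(\JB_0^2)$ together with explicit entailment calculations in the sense of~\cite{CD98}, or a more structural argument that exploits the product representation $\JB_0 \cong \two\boxtimes\two$ for distributive bilattices to transfer Priestley duality wholesale; neither route is entirely automatic and both involve careful bookkeeping of the constants in the signature.
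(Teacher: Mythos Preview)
The paper does not give a dedicated proof of this theorem; it is cited from Cabrer and Priestley~\cite{CPdbl}. However, the $n=0$ case is re-derived inside the proof of Theorem~\ref{cor:bigduality}, and that is what you should compare against.

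Your framework for the duality stage is correct and matches the paper: NU Duality Theorem plus entailment reduction. But what you identify as the ``main obstacle'' dissolves once you actually compute $\Sub(\JB_0^2)$. The paper shows (as a special case of Theorem~\ref{thm:MI}, and explicitly in the proof of Proposition~\ref{prop:absunavoid}) that $\Sub(\JB_0^2) = \{\Delta, S_{0,0}, S\convsub{0,0}, J_0^2\}$, where $S_{0,0} = {\le_\k}$. The only meet-irreducibles are $\le_\k$ and its converse, and converse is an admissible entailment construct, so $\{\le_\k\}$ entails everything. No Priestley-style argument, no prime filter/ideal bookkeeping, and no product representation is needed; the lattice of compatible binary relations is simply tiny.

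Your strongness argument is where there is a genuine gap. The claim that $\JT_0$ is injective in $\CX$ because it is ``a retract of a finite power of the $2$-element chain in the Priestley category, and this retraction transports through the equivalence'' is not justified: you have not exhibited an equivalence between $\CX = \IScP(\JT_0)$ and the Priestley category, and in any case retractions do not automatically transport along functors in the way you suggest. The paper's route is both different and cleaner: $\JB_0$ is subdirectly irreducible with no proper subalgebras, hence has irreducibility index~$1$, and the only compatible unary partial operation on $\JB_0$ is the identity; so \cite[Theorem~3.3.7]{CD98} gives strongness directly. Alternatively, the single-sorted case of Theorem~\ref{MultiNUDT} applies.
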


Let $n\in \omega\comp\{0\}$.
It follows from Proposition~\ref{prop:thevariety} that the variety $\CV_n$ generated by $\JB_n$ satisfies
\[
\CV_n = \ISP(\{\M_0,\M_1,\ldots, \M_n\}),
\]
where $\M_0 = \JB_{0,n}$ and $\M_k = \JB_{1,n,k}$, for $k\in \{1, \dots, n\}$,  are the subdirectly irreducible algebras in~$\CV_n$.
Consequently, it is natural to find a multi-sorted duality for $\CV_n$  using $M_0, \dots, M_n$ as the sorts. We shall give a brief introduction to multi-sorted dualities in the special case that the algebras are lattice based. We refer to Davey and Priestley~\cite[Section 2]{DP-multi}, where they were first introduced, and to Clark and Davey~\cite[Chapter~7]{CD98} for a detailed discussion of multi-sorted dualities in general.

Let $\{\M_0,\dots, \M_n\}$ be a set of finite, pairwise non-isomorphic lattice-based algebras of the same signature and let $\CA = \ISP(\{\M_0,\dots, \M_n\})$ be the quasivariety generated by them.
We shall refer to a non-empty subuniverse of $\M_j \times \M_k$ as a \emph{compatible relation from $\M_j$ to~$\M_k$}, for all $j, k \in \{0, \dots, n\}$.
As an alter ego for the set $\{\M_0,\dots, \M_n\}$, we will use a multi-sorted structure of the following kind:
\[
\MT = \langle M_0\du \cdots \du M_n; \mathcal G, \mathcal R, \T\rangle,
\]
where, for each $g\in \mathcal G$, there exist $j, k \in \{0, \dots, n\}$, such that ${g\colon \M_j \to \M_k}$ is a homomorphism, each relation $R\in
\mathcal R$ is a compatible relation from $\M_j$~to~$\M_k$, for some
$j, k \in \{0, \dots, n\}$, and $\T$ is the disjoint union topology
obtained from the discrete topology on the sorts.
(In general,
multi-sorted operations and relations of higher arity are permitted,
but
we do not require them.)

Objects in the dual category will now be multi-sorted Boolean
topological structures $\X$ in the signature of~$\MT$. Thus,
$\X = \langle X_0 \du \cdots \du X_n ; \mathcal G^\X, \mathcal R^\X,
\T^\X\rangle$, where each $X_j$ carries a Boolean topology and
$\T^\X$ is the corresponding disjoint-union topology, if $g\colon
\M_j\to \M_k$ is in $\mathcal G$, then the corresponding $g^\X\in
\mathcal G^\X$  is a continuous map $g^\X \colon X_j \to X_k$, and
if $R\in \mathcal R$ is a relation from $\M_j$ to $\M_k$, then the
corresponding $R^\X\in \mathcal R^\X$ is a topologically closed
subset of $X_j\times X_k$. Given two such multi-sorted topological
structures $\X$ and $\Y$, a morphism $\varphi\colon \X \to \Y$ is a
continuous map that preserves sorts (so $\varphi(X_j) \subseteq
Y_j$, for all~$j$) and preserves the operations and relations.

For a non-empty set $S$, the power $\MT^S$ is defined in the natural sort-wise way; the underlying set of $\MT^S$ is $M_0^S \du \dots \du M_n^S$ and the operations and relations between the sorts are defined pointwise. The potential dual category is now defined to be the category
$\CX =\IScP(\MT)$ whose objects are isomorphic copies of topologically closed substructures of non-zero powers of~$\MT$, where substructure has its natural multi-sorted meaning.

Given an algebra $\A \in \CA$, its dual $\mathrm{D}(\A)\in \CX$ is defined to be
\[
\mathrm{D}(\A) := \CA(\A, \M_0) \du \dots \du \CA(\A, \M_n),
\]
as a topologically closed substructure of $\MT^A$.
Given $\X\in \CX$, its dual $\mathrm{E}(\X) \in \CA$ is defined to be
\[
\mathrm{E}(\X) := \CX(\X, \MT) \le \M_0^{X_0} \times \dots \times \M_n^{X_n}.
\]
The fact that the structure on $\MT$ is compatible with the set $\{\M_0,\ldots, \M_n\}$ guarantees that $\mathrm{E}(\X)$ is a subalgebra of $\M_0^{X_0} \times \dots \times \M_n^{X_n}$ and hence $\mathrm{E}$ is well defined. The definitions of $\mathrm{D}$ on homomorphisms and $\mathrm{E}$ on morphisms are the natural extensions of the single-sorted case defined in full above.

The definitions of the unit $e_\A \colon \A \to
\mathrm{ED}(\A)$ and counit $\varepsilon_\X \colon \X \to \mathrm{DE}(\X)$
in the single-sorted case extend naturally to this multi-sorted setting.
The concepts of \emph{duality}, \emph{full duality} and
\emph{strong duality} are defined exactly as they were in the
single-sorted case.

We now present a version of the Multi-sorted NU Strong Duality Theorem~\cite[Theorem~7.1.2]{CD98} that is tailored to the variety~$\CV_n$.

\begin{thm}[\textbf{Special Multi-sorted NU Strong Duality Theorem}]\label{MultiNUDT}
Assume that $\M_0$, \dots, $\M_n$ are finite,
pairwise non-isomorphic lattice-based
algebras of the same signature. Assume also that, for all $k\in \{0, \dots, n\}$, the algebra $\M_k$ is subdirectly irreducible and every element of $\M_k$ is a constant.  Define
\[
\MT = \langle  M_0 \du \cdots\du  M_n;
\mathcal G, \mathcal R, \T \rangle,
\]
where $\mathcal G = \bigcup \{\, \CA(\M_j, \M_k)\mid j, k\in\{0,\dots,n\}\,\}$ is the set of all homomorphisms between the sorts and $ \mathcal R= \bigcup \{\, \Sub(\M_j\times\M_k)\mid j, k\in\{0,\dots,n\}\,\}$ is the set of all compatible relations between the sorts. Then $\MT$ yields a multi-sorted strong, and therefore full, duality on $\ISP(\{\M_0,\dots, \M_n\})$.
\end{thm}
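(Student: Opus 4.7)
The plan is to deduce this as a direct specialization of the general Multi-sorted NU Strong Duality Theorem~\cite[Theorem~7.1.2]{CD98}; the task reduces to verifying that our particular alter ego $\MT$ satisfies the hypotheses of that result.

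First I would check the near-unanimity hypothesis. Since each $\M_k$ is lattice-based, the lattice median $m(x,y,z) = (x\wedge y)\vee(y\wedge z)\vee(z\wedge x)$ provides a ternary NU term common to every sort. Next I would obtain the duality itself via the Multi-sorted NU Duality Theorem~\cite[Theorem~7.1.1]{CD98}: once $\mathcal R$ contains every non-empty subuniverse of every product $\M_j\times\M_k$ — which is precisely our hypothesis — the resulting alter ego already yields a multi-sorted duality on $\ISP(\{\M_0,\dots,\M_n\})$. Adding all homomorphisms between sorts to $\mathcal G$ can only strengthen the structure, so the duality portion is in hand.

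The remaining content is the strongness assertion, and I expect this to be the main obstacle; it reduces to showing that $\MT$ is injective in the topological category $\CX = \IScP(\MT)$. Here the hypothesis that every element of $\M_k$ is a constant is decisive: $\M_k$ has no proper subalgebras, so any closed substructure of a power of $\MT$ contains each sort's constants, and any sort-preserving map respects the nullary operations automatically. Combining this with subdirect irreducibility of each $\M_k$ — and exploiting compactness, the discreteness of the sort topologies, and the presence of \emph{every} compatible binary relation in $\mathcal R$ — one extends any morphism $\X\to\MT$ along any closed embedding $\X\hookrightarrow\Y$ in $\CX$ by a standard diagonal-style argument. Once injectivity of $\MT$ is secured, the remaining structural closure hypotheses of~\cite[Theorem~7.1.2]{CD98} hold trivially because $\mathcal G\cup\mathcal R$ is as rich as possible, and strongness, together with fullness, follows.
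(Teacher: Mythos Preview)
Your proposal is correct and matches the paper's approach exactly: the paper does not prove this theorem at all but simply presents it as a version of the Multi-sorted NU Strong Duality Theorem \cite[Theorem~7.1.2]{CD98} tailored to the present setting, which is precisely what you propose. Your sketch of how the hypotheses of the general theorem are verified (NU term from the lattice median, duality from the binary relations, strongness from the absence of proper subalgebras and subdirect irreducibility) is more detail than the paper offers, but it is consistent with how the general result in \cite{CD98} is established.
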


Assume that $\JT_n$ yields a single-sorted duality on the quasivariety $\CJ_n = \ISP(\JB_n)$ and that $\MT_n$ yields an $(n{+}1)$-sorted duality on the variety $\CV_n = \HSP(\JB_n) = \ISP(\{\M_0,\dots, \M_n\})$.
The $S$-generated free algebra $\F_{\CV_n}\!(S)$ in the variety $\CV_n$ is isomorphic to the subalgebra of $\JB_n^{J_n^S}$ generated by the projections and so lies in the quasivariety $\CJ_n$. We can therefore use either duality to find the free algebras in~$\CV_n$.
Indeed, $\F_{\CV_n}\!(S)$ is isomorphic to $\mathrm{E}(\JT_n^S)$ in the single-sorted case and $\mathrm{E}(\MT_n^S)$ in the multi-sorted case. The difference is that $\mathrm{E}(\JT_n^S)$ represents $\F_{\CV_n}\!(S)$ as a subalgebra of $\JB_n^{J_n^S}$ while $\mathrm{E}(\MT_n^S)$ represents $\F_{\CV_n}\!(S)$ as a subalgebra of $\M_0^{M_0^S} \times \dots \times \M_n^{M_n^S}$---see Remark~\ref{rem:compare}.

\section{A natural duality for the quasivariety $\CJ_n$}\label{sec:CJduality}

In this section, we describe an alter ego of $\JB_n$ that yields an optimal duality on the quasivariety $\CJ_n = \ISP(\JB_n)$ generated by~$\JB_n$. The proof that the alter ego yields a duality is in Section~\ref{sec:singleproof} and its optimality is proved in Section~\ref{sec:opt}.

\subsection{The relation $S_{n,n}$}\label{subsec:Snn}
Let $n\in \omega$. Define the subset $S_{n,n}$ of $J_n^2$ by
\[
S_{n,n} := \big(J_n \times \{\top\}\big) \cup \big(\{\bot\} \times J_n\big) \cup   \bF^2 \cup \bT^2.
\]
The relation $S_{n,n}$ is a quasi-order on $J_n$---see Figure~\ref{fig:Sni}.
(When depicting a quasi-order $R$, we draw $x$ and $y$ in the same block if $x\mathbin{R}y$ and $y\mathbin{R}x$.)

\begin{figure}[ht]
\centering
\begin{tikzpicture}
\node at (-1.75,0) {$S_{n,n}$};
\node[quasi] (bot) at (0,0) {$\bot$};
\node[quasi] (f) at (-1.25,1) {$\mbf_0,\dots,\mbf_n$};
\node[quasi] (t) at (1.25,1) {$\mbt_0,\dots,\mbt_n$};
\node[quasi] (top) at (0,2) {$\top$};
\draw[order] (bot) -- (f) -- (top);
\draw[order] (bot) -- (t) -- (top);
\end{tikzpicture}
\caption{The binary relation $S_{n,n}$ drawn as a quasi-order.}\label{fig:Sni}
\end{figure}
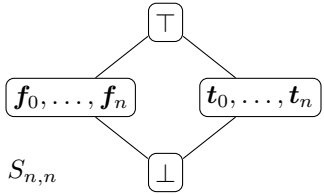

Note that the relation $S_{0,0}$ is just the knowledge order $\le_\k$ on $\JB_0$.
For $n > 0$, the quasi-order $S_{n,n}$ is not an order.

\subsection{The relation $S_{n,i}$}\label{subsec:Sni}
For $n \in \omega\setminus \{0\}$ and $i \in \{0,\ldots,n-1\}$, define the subset $S_{n,i}$ of $J_n^2$ by:
\begin{multline*}
S_{n,i} := \{(\top, \top), (\bot, \bot) \}\cup \Big(\bF^2\setminus \big(\{\mbf_0, \dots, \mbf_i\}\times \{\mbf_{i+1},\dots, \mbf_n\}\big)\Big)\notag\\
\cup \Big(\bT^2\setminus \big(\{\mbt_0, \dots, \mbt_i\}\times \{\mbt_{i+1},\dots, \mbt_n\}\big)\Big).
\end{multline*}

The relation $S_{n,i}$ is also a quasi-order on $J_n$---see Figure~\ref{fig:RforJ3}. Note that, unlike the quasi-order $S_{n,n}$,  both $\top$ and $\bot$ are isolated in the quasi-order~$S_{n,i}$. The quasi-order $S_{n,i}$ is an order if and only if $n = 1$ and $i = 0$.

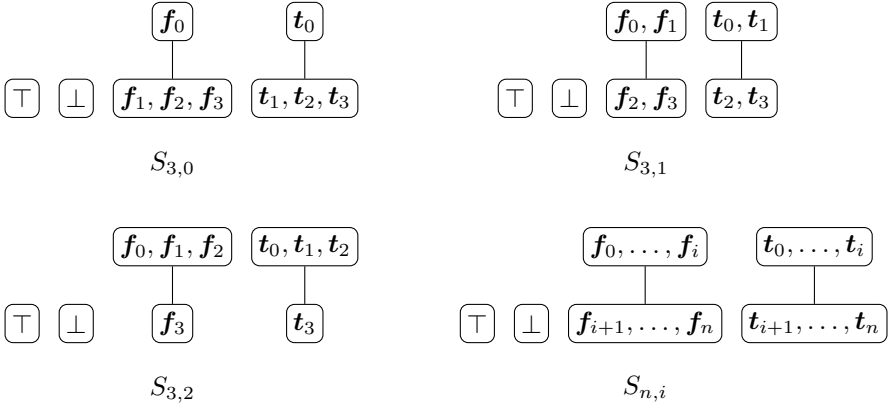
\begin{figure}[!ht]
\centering
\begin{tikzpicture}
\begin{scope}
  \node[quasi] (top) at (0,0) {$\top$};
  \node[quasi,anchor=west,right=0.25cm of top.east] (bot) {$\bot$};
  \node[quasi,anchor=west,right=0.25cm of bot.east] (f1) {$\mbf_1,\mbf_2,\mbf_3$};
  \node[quasi,anchor=south,above=0.5cm of f1.north] (f0) {$\mbf_0$};
  \node[quasi,anchor=west,right=0.25cm of f1.east] (t1) {$\mbt_1,\mbt_2,\mbt_3$};
  \node[quasi,anchor=south,above=0.5cm of t1.north] (t0) {$\mbt_0$};
  \draw[order] (f1) -- (f0);
  \draw[order] (t1) -- (t0);
  \node[anchor=base,below=0.5cm of f1.base] {$S_{3,0}$};
\end{scope}
\begin{scope}[xshift=6.5cm] 
  \node[quasi] (top) at (0,0) {$\top$};
  \node[quasi,anchor=west,right=0.25cm of top.east] (bot) {$\bot$};
  \node[quasi,anchor=west,right=0.25cm of bot.east] (f2) {$\mbf_2,\mbf_3$};
  \node[quasi,anchor=south,above=0.5cm of f2.north] (f0) {$\mbf_0,\mbf_1$};
  \node[quasi,anchor=west,right=0.25cm of f2.east] (t2) {$\mbt_2,\mbt_3$};
  \node[quasi,anchor=south,above=0.5cm of t2.north] (t0) {$\mbt_0,\mbt_1$};
  \draw[order] (f2) -- (f0);
  \draw[order] (t2) -- (t0);
  \node[anchor=base,below=0.5cm of f2.base] {$S_{3,1}$};
\end{scope}
\begin{scope}[yshift=-3cm]
  \node[quasi] (top) at (0,0) {$\top$};
  \node[quasi,anchor=west,right=0.25cm of top.east] (bot) {$\bot$};
  \node[quasi,anchor=south west,above right=0.5cm and 0.25cm of bot.north east] (f0) {$\mbf_0,\mbf_1,\mbf_2$};
  \node[quasi,anchor=north,below=0.5cm of f0.south] (f3) {$\mbf_3$};
  \node[quasi,anchor=west,right=0.25cm of f0.east] (t0) {$\mbt_0,\mbt_1,\mbt_2$};
  \node[quasi,anchor=north,below=0.5cm of t0.south] (t3) {$\mbt_3$};
  \draw[order] (f3) -- (f0);
  \draw[order] (t3) -- (t0);
  \node[anchor=base,below=0.5cm of f3.base] {$S_{3,2}$};
\end{scope}
\begin{scope}[xshift=6.0cm,yshift=-3cm] 
  \node[quasi] (top) at (0,0) {$\top$};
  \node[quasi,anchor=west,right=0.25cm of top.east] (bot) {$\bot$};
  \node[quasi,anchor=west,right=0.25cm of bot.east] (fn) {$\mbf_{i+1},\dots,\mbf_n$};
  \node[quasi,anchor=south,above=0.5cm of fn.north] (f0) {$\mbf_0,\dots,\mbf_i$};
  \node[quasi,anchor=west,right=0.25cm of fn.east] (tn) {$\mbt_{i+1},\dots,\mbt_n$};
  \node[quasi,anchor=south,above=0.5cm of tn.north] (t0) {$\mbt_0,\dots,\mbt_i$};
  \draw[order] (fn) -- (f0);
  \draw[order] (tn) -- (t0);
  \node[anchor=base,below=0.5cm of fn.base] {$S_{n,i}$};
\end{scope}
\end{tikzpicture}
\caption{The binary relations $S_{3,0}$, $S_{3,1}$, $S_{3,2}$ and $S_{n,i}$ drawn as quasi-orders.}\label{fig:RforJ3}
\end{figure}

\subsection{The relation $R_{n,i,j}$}\label{subsec:Rnij}
For $n\in \omega\setminus\{0, 1\}$ and $i,j \in \{0,\ldots,n-1\}$, we also need the union
\[
R_{n, i, j}:= S_{n,i}\cup S_{n,j}.
\]
It is easily seen that, if $i < j$, then
\begin{multline*}
R_{n, i, j} = \{(\top, \top), (\bot, \bot) \} \cup \Big(\bF^2\setminus (\{\mbf_0, \dots, \mbf_i\}\times \{\mbf_{j+1},\dots, \mbf_n\}\big)\Big)\\
 \cup \Big(\bT^2\setminus \big(\{\mbt_0, \dots, \mbt_i\}\times \{\mbt_{j+1},\dots, \mbt_n\}\big)\Big).
\end{multline*}

We shall see in Section~\ref{sec:CompOnHoms} that each of the relations $S_{n,n}$, $S_{n,i}$ and $R_{n,i,j}$ defined above is a compatible relation on $\JB_n$ and hence may be used as part of the structure on an alter ego of~$\JB_n$---see Lemma~\ref{lem:Sle=Sgle} for $S_{n,n}$ and Lemma~\ref{prop:Sab-subalg} for $S_{n,i}$ and $R_{n,i,j}$.

We can now state our single-sorted duality theorem. The dualities for $\CJ_1$ and $\CJ_2$ were obtained via computer calculations in the first author's DPhil thesis~\cite{C-thesis}.

\begin{thm}\label{cor:bigduality}
Let $n\in \omega$. Define the alter ego $\JT_n = \langle J_n; \mathcal{R}_{(n)}, \T\rangle$ of $\JB_n$, where
$\mathcal R_{(n)}$ is the set of compatible binary relations on $J_n$ given by
\begin{alignat*}{2}
\mathcal R_{(0)} &= \{S_{0,0}\}, &\quad \mathcal R_{(1)} &= \{S_{1,0}, S_{1,1}\},\\
\mathcal R_{(2)} &= \{S_{2,0}, S_{2,1}, S_{2,2}\}, &\quad \mathcal R_{(3)} &= \{S_{3,0}, S_{3,1}, S_{3,2}, S_{3,3}, R_{3,0,2}\},
\end{alignat*}
and, in general, for $n\ge 3$,
\[
\mathcal R_{(n)} = \big\{\,S_{n,i} \mid 0\le i \le n\,\big\} \cup  \big\{\,R_{n,i,j}\mid i,j \in \{0,\ldots,n-1\} \text{ with } i < j-1\,\big\}.
\]
\begin{enumerate}[\normalfont(1)]

\item
The alter ego $\JT_n$ yields an optimal duality on $\CJ_n =\ISP(\JB_n)$.

\item $\JT_0$ and $\JT_1$ yield strong, and therefore full, dualities on $\CJ_0$ and $\CJ_1$, respectively.

\item For all $n\ge 2$, the duality on $\CJ_n$ can be upgraded to a strong, and therefore full, duality by adding all compatible $n$-ary partial operations on $\JB_n$ to the structure of the alter ego $\JT_n$.

\item $|\mathcal R_{(0)}| = 1$ and $|\mathcal R_{(n)}| = \frac12(n^2 - n +4)$, for all $n\in \omega\setminus\{0\}$.

\end{enumerate}
\end{thm}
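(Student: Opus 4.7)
The plan is to invoke the Special NU Duality Theorem (Theorem~\ref{NUDT}), which guarantees that the set of \emph{all} non-empty subuniverses of $\JB_n^2$ yields a duality on $\CJ_n$. To prove part~(1), I would first confirm that each of $S_{n,i}$ and $R_{n,i,j}$ is a compatible relation on $\JB_n$, so that the alter ego $\JT_n$ is legitimate; this will be delivered by the subuniverse analysis in Section~\ref{sec:CompOnHoms}. It then suffices to show that every member of $\Sub(\JB_n^2)$ is entailed by $\mathcal R_{(n)}$. I would identify the meet-irreducible elements of $\Sub(\JB_n^2)$ with members of $\mathcal R_{(n)}$ (up to converse) using the structural description of $\Sub(\A\times\B)$ for non-trivial homomorphic images $\A,\B$ of $\JB_n$. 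The two families in $\mathcal R_{(n)}$ should correspond to the two shapes of meet-irreducibles that arise: the $S_{n,i}$ from a single ``split'' of $\bF$ and $\bT$ into two blocks, and the $R_{n,i,j}$ with $j>i+1$ from a pair of non-adjacent splits whose union is not further intersection-decomposable.

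For optimality, I need to show that no relation $\rho\in\mathcal R_{(n)}$ is entailed by $\mathcal R_{(n)}\setminus\{\rho\}$. Meet-irreducibility in $\Sub(\JB_n^2)$ rules out recovering $\rho$ as an intersection of strictly larger compatible relations. The remaining entailment operations available in a binary signature are converses and applications of unary term functions; converses I would rule out by inspection of Figures~\ref{fig:Sni} and~\ref{fig:RforJ3}, since each $\rho$ distinguishes $\top$ from $\bot$ asymmetrically in a way that is not realised by any other relation on the list, and unary term action is limited to $\id$ and $\neg$, neither of which permutes the labels of the blocks. The main obstacle here is the meet-irreducibility check, which is combinatorially intricate because $|\Sub(\JB_n^2)|$ grows rapidly with $n$ (the paper records $|\Sub(\JB_3^2)|=200$); this is precisely why the subuniverse analysis of Section~\ref{sec:CompOnHoms} is cast at the level of products of homomorphic images of $\JB_n$ rather than just $\JB_n^2$.

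Parts~(2) and~(3) concern upgrading the duality to a strong duality, for which the standard test is injectivity of the alter ego in the topological quasivariety it generates. For $n=0$, we have $S_{0,0}={\le_\k}$ and the result follows directly from Theorem~\ref{thm:lekdualfour}. For $n=1$, the alter ego carries only $S_{1,0}$ and $S_{1,1}$, and injectivity of $\JT_1$ in $\IScP(\JT_1)$ is a finite check that every partial endomorphism of $\JT_1$ extends to a total endomorphism compatible with both relations, which I expect to follow by routine case analysis on the six-element carrier. For $n\ge 2$, the obstruction to strongness with only binary relations is the existence of partial endomorphisms on proper subuniverses of $\JB_n$ that do not extend totally; adjoining all compatible $n$-ary partial operations provides the missing extensions and upgrades the duality to a strong one via the standard NU strong duality results of~\cite{CD98}.

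Part~(4) is a counting exercise. There are $n+1$ relations $S_{n,0},\dots,S_{n,n}$, and for $n\ge 3$ a relation $R_{n,i,j}$ for each pair $(i,j)$ with $0\le i<j-1$ and $j\le n-1$; for fixed $i\in\{0,\dots,n-3\}$ the index $j$ ranges over $\{i+2,\dots,n-1\}$, contributing $n-i-2$ pairs. Hence the total number of $R_{n,i,j}$'s is $\sum_{i=0}^{n-3}(n-i-2)=\tfrac12(n-1)(n-2)$, giving $|\mathcal R_{(n)}|=(n+1)+\tfrac12(n-1)(n-2)=\tfrac12(n^2-n+4)$. For $n\in\{0,1,2\}$ the formula (with the convention that the empty sum is zero for the $R$-family) is checked against the explicit lists in the theorem statement.
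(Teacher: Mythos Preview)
Your overall architecture (NU Duality Theorem, then reduce to meet-irreducibles, then prune and check optimality) matches the paper, but there are two genuine gaps.

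\textbf{Duality.} You claim the meet-irreducibles of $\Sub(\JB_n^2)$ coincide (up to converse) with $\mathcal R_{(n)}$. They do not: the analysis in Section~\ref{sec:CompOnHoms} (specialised to $\A=\B=\JB_n$) shows that \emph{every} $R_{n,i,j}$ with $0\le i<j\le n-1$ is meet-irreducible, including the ``adjacent'' ones $R_{n,i,i+1}$ that are deliberately omitted from $\mathcal R_{(n)}$. So after discarding meet-reducibles and converses you arrive at $\mathcal R_{(n)}^{\mathrm{mi}}\supsetneq \mathcal R_{(n)}$. The missing step is to show that each $R_{n,i,i+1}$ is entailed by $\{S_{n,i},S_{n,i+1}\}$. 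The paper does this by proving $R_{n,i,i+1}=S_{n,i}\cdot S_{n,i+1}$ is a \emph{homomorphic} relational product: one exhibits a homomorphism $u\colon \mathbf R_{n,i,i+1}\to \JB_n$ with $(a,u(a,b))\in S_{n,i}$ and $(u(a,b),b)\in S_{n,i+1}$ for all $(a,b)$. This is precisely the extra entailment construct you omit, and it is essential.

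\textbf{Optimality.} Your argument that ``the remaining entailment operations available in a binary signature are converses and applications of unary term functions'' is incorrect; the admissible constructs include (homomorphic) relational products and several others (see \cite{DHP-SSE}, \cite[2.4.5]{CD98}), so meet-irreducibility plus the symmetry remarks you make are far from sufficient. Also note that for $i<n$ the relation $S_{n,i}$ isolates both $\top$ and $\bot$, so the asymmetry you invoke is absent. The paper proceeds quite differently: for each $S_{n,i}$ with $i<n$ it exhibits an explicit map $\gamma\colon \mathrm D(\mathbf S_{n,i})\to J_n$ preserving $\mathcal R_{(n)}\setminus\{S_{n,i}\}$ but not $S_{n,i}$; for $S_{n,n}$ and each $R_{n,i,j}$ with $j>i+1$ it proves hom-minimality (i.e., $\mathrm D(\mathbf S)=\{\rho_1,\rho_2\}$) and then appeals to a general lemma that a hom-minimal, diagonal, meet-irreducible relation is absolutely unavoidable. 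Your plan needs to be replaced by something of this strength.

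For (2)--(3): your reference to ``partial endomorphisms on proper subuniverses of $\JB_n$'' misfires, since $\JB_n$ has no proper subalgebras (every element is a constant). The paper instead computes the \emph{irreducibility index} of $\JB_n$ (it is $1$ for $n\le 1$ and $n$ for $n\ge 2$, since $\Con(\JB_n)\cong \two^n\oplus\one$) and invokes \cite[Theorem~3.3.7]{CD98}: this immediately gives strongness for $n\le 1$ and, for $n\ge 2$, strongness after adjoining all compatible $n$-ary partial operations. Your counting in (4) is fine.
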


Note that when $n = 0$, the duality is the strong duality given by Cabrer and Priestley~\cite{CPdbl} as stated in our Theorem~\ref{thm:lekdualfour}.

\section{A natural duality for the variety $\CV_n$}\label{sec:CVduality}

Fix $n\in \omega\comp\{0\}$. In this section, we describe a strong, multi-sorted natural duality for the variety $\CV_n$ generated by~$\JB_n$. The proof that the alter ego yields a strong duality is contained in Section~\ref{sec:multiproof} and its optimality is proved in Section~\ref{sec:optmulti}.

To simplify the notation, we denote $\JB_{0,n}$ by $\M_0$ and, for $k\in \{1, \dots,n\}$, we denote $\JB_{1,n,k}$ by~$\M_k$. Throughout this section, we shall label the
elements of $\M_0$ and $\M_k$, for $k\in \{1, \dots, n\}$, as shown in Figure~\ref{fig:multi}.

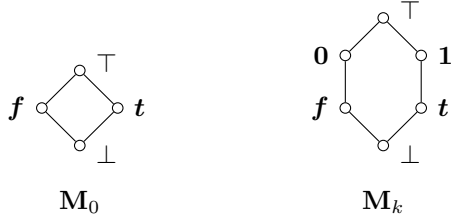
\begin{figure}[hbt]
\centering
\begin{tikzpicture}
\begin{scope}[scale=0.5]
  \node at (0,-1.5) {$\M_0$};
  \node[unshaded] (bot) at (0,0) {};
  \node[unshaded] (f) at (-1,1) {};
  \node[unshaded] (t) at (1,1) {};
  \node[unshaded] (top) at (0,2) {};
  \draw[order] (bot) -- (f) -- (top);
  \draw[order] (bot) -- (t) -- (top);
  \node[label,anchor=west,yshift=-3pt] at (bot) {$\bot$};
  \node[label,anchor=east] at (f) {$\mbf$};
  \node[label,anchor=west] at (t) {$\mbt$};
  \node[label,anchor=west,yshift=3pt] at (top) {$\top$};
\end{scope}
\begin{scope}[scale=0.5,xshift=8cm]
  \node at (0,-1.5) {$\M_k$};
  \node[unshaded] (bot) at (0,0) {};
  \node[unshaded] (f) at (-1,1) {};
  \node[unshaded] (t) at (1,1) {};
  \node[unshaded] (0) at (-1,2.4) {};
  \node[unshaded] (1) at (1,2.4) {};
  \node[unshaded] (top) at (0,3.4) {};
  \draw[order] (bot) -- (f) -- (0) -- (top);
  \draw[order] (bot) -- (t) -- (1) -- (top);
  \node[label,anchor=west,yshift=-3pt] at (bot) {$\bot$};
  \node[label,anchor=east] at (f) {$\mbf$};
  \node[label,anchor=west] at (t) {$\mbt$};
  \node[label,anchor=east] at (0) {$\zero$};
  \node[label,anchor=west] at (1) {$\one$};
  \node[label,anchor=west,yshift=3pt] at (top) {$\top$};
\end{scope}
\end{tikzpicture}
\caption{The  bilattice reducts of $\M_0$ and $\M_k$ in their knowledge order. \label{fig:multi}}
\end{figure}
Note that, for all $k\in \{1, \dots, n\}$, the algebra  $\M_k$ has underlying set
\[
M_k = \{\top, \zero, \mbf, \one, \mbt, \bot\}.
\]
Thus, all algebras $\M_k$, with $k\in\{1, \dots,n\}$, have the same  bilattice reduct but their constants $\mbf_0, \ldots, \mbf_n$ and $\mbt_0, \ldots, \mbt_n$ take different values:
\begin{itemize}

\item in $\M_0$ all of the `false' constants $\mbf_0,\ldots,\mbf_n$ 
take the value $\mbf$ and all of the `true' constants $\mbt_0, \ldots, \mbt_n$ take the value $\mbt$,

\item in $\M_k$, for $k\in \{1, \dots, n\}$, the constants $\mbf_0,\ldots,\mbf_{k-1}$ take the value $\zero$  and $\mbf_k, \ldots, \mbf_n$ 
take the value ~$\mbf$, and similarly the constants $\mbt_0,\ldots,\mbt_{k-1}$ 
take the value $\one$  and $\mbt_k, \ldots, \mbt_n$ 
take the value ~$\mbt$.
\end{itemize}

It follows from Proposition~\ref{prop:thevariety} that
\[
\CV_n = \ISP(\{\M_0, \M_1, \dots, \M_n\}); 
\]
so we will use an alter ego with $n + 1$ sorts. Strictly
speaking, to make the sorts disjoint we should take the underlying
set of $M_k$ to be $\{\top, \zero, \mbf, \one, \mbt, \bot\} \times
\{k\}$, for all $k\in \{1, \dots, n\}$. To keep the notation simple,
we will refrain from doing this but, as in the dot points below,
will always make it clear which sort is intended.

We will require the following multi-sorted relations; each is a compatible relation from $\M_j$ to $\M_k$, for some $j, k\in \{1, \dots,n\}$ with $j\le k$.
\begin{itemize}
\item ${\lez} = \big(M_0 \times \{\top\}\big) \cup \big(\{\bot\} \times M_0\big) \cup \{(\mbf, \mbf), (\mbt, \mbt) \}$ interpreted as a binary relation on $\M_0$,

\item ${\lek} = \{(\top, \top), (\bot, \bot) \} \cup \big(\{(\mbf, \mbf), (\mbf, \zero), (\zero, \zero)\}\big) \cup
\big(\{ (\mbt, \mbt), (\mbt,\one), (\one, \one)\}\big)$ interpreted as a binary relation on $\M_k$, for $k\in \{1, \dots, n\}$,

\item ${\lejk} = \{(\top, \top), (\bot, \bot)\} \cup \big(\{(\mbf, \mbf), (\mbf, \zero), (\zero, \zero)\}\big) \cup
\big(\{ (\mbt, \mbt), (\mbt,\one), (\one, \one)\}\big)$ interpreted as a relation from $\M_j$ to $\M_k$, for $j, k\in \{1, \dots, n\}$ with $j < k$.

\end{itemize}
Note that $\lez$ is the relation $S_{0,0}$ on $\JB_0$ interpreted as a relation on~$\M_0$,
that $\lek$ is the relation $S_{1,0}$ on $\JB_1$ interpreted as a  relation on $M_k$,
and that $\lejk$ is the relation $S_{1,0}$ on $\JB_1$ interpreted as a relation from $\M_j$ to $\M_k$.
For all $k\in \{0, \dots, n\}$, the relation $\lek$ is an order; in particular, $\lez$ is the knowledge order on $\JB_0$ interpreted as an order on $M_0$. The relation $\lejk$ can be thought of as the order relation $S_{1,0}$ on $\JB_1$ `stretched' from $M_j$ to $M_k$. See Figure~\ref{fig:orders}.

\begin{figure}[ht]
\centering
\begin{tikzpicture}[scale=0.5]
\begin{scope}
    \node[anchor=north] at (0,-1.5) {$\lez$};
  \node[unshaded] (bot) at (0,0) {};
  \node[unshaded] (f) at (-1,1) {};
  \node[unshaded] (t) at (1,1) {};
  \node[unshaded] (top) at (0,2) {};
  \draw[order] (bot) -- (f) -- (top);
  \draw[order] (bot) -- (t) -- (top);
  \node[label,anchor=north] at (bot) {$\bot$};
  \node[label,anchor=east] at (f) {$\mbf$};
  \node[label,anchor=west] at (t) {$\mbt$};
  \node[label,anchor=south] at (top) {$\top$};
\end{scope}
\begin{scope}[xshift=5cm] 
    \node[anchor=north] at (2.1,-1.5) {$\lek$};
  \node[unshaded] (top) at (0,0) {};
  \node[unshaded] (bot) at (1.4,0) {};
  \node[unshaded] (f) at (2.8,0) {};
  \node[unshaded] (0) at (2.8,2) {};
  \node[unshaded] (t) at (4.2,0) {};
  \node[unshaded] (1) at (4.2,2) {};
  \draw[order] (f) -- (0);
  \draw[order] (t) -- (1);
  \node[label,anchor=north] at (bot) {$\bot$};
  \node[label,anchor=north] at (f) {$\mbf$};
  \node[label,anchor=north] at (t) {$\mbt$};
  \node[label,anchor=south] at (0) {$\zero$};
  \node[label,anchor=south] at (1) {$\one$};
  \node[label,anchor=north] at (top) {$\top$};
\end{scope}
\begin{scope}[xshift=14cm] 
    \node[anchor=north] at (2.5,-1.5) {$\lejk$};
    \node at (-1.25,0) {$M_j$};
  \node at (-1.25,1.4) {$M_k$};
  \node[unshaded] (topj) at (0,0) {};
  \node[unshaded] (botj) at (1,0) {};
  \node[unshaded] (fj) at (2,0) {};
  \node[unshaded] (0j) at (3,0) {};
  \node[unshaded] (tj) at (4,0) {};
  \node[unshaded] (1j) at (5,0) {};
  \node[unshaded] (topk) at (0,2) {};
  \node[unshaded] (botk) at (1,2) {};
  \node[unshaded] (fk) at (2,2) {};
  \node[unshaded] (0k) at (3,2) {};
  \node[unshaded] (tk) at (4,2) {};
  \node[unshaded] (1k) at (5,2) {};
  \draw[arrow] (topj) -- (topk);
  \draw[arrow] (botj) -- (botk);
  \draw[arrow] (fj) -- (fk);
  \draw[arrow] (fj) -- (0k);
  \draw[arrow] (0j) -- (0k);
  \draw[arrow] (tj) -- (tk);
  \draw[arrow] (tj) -- (1k);
  \draw[arrow] (1j) -- (1k);
  \node[label,anchor=north] at (botj) {$\bot$};
  \node[label,anchor=north] at (fj) {$\mbf$};
  \node[label,anchor=north] at (tj) {$\mbt$};
  \node[label,anchor=north] at (0j) {$\zero$};
  \node[label,anchor=north] at (1j) {$\one$};
  \node[label,anchor=north] at (topj) {$\top$};
  \node[label,anchor=south] at (botk) {$\bot$};
  \node[label,anchor=south] at (fk) {$\mbf$};
  \node[label,anchor=south] at (tk) {$\mbt$};
  \node[label,anchor=south] at (0k) {$\zero$};
  \node[label,anchor=south] at (1k) {$\one$};
  \node[label,anchor=south] at (topk) {$\top$};
\end{scope}
\end{tikzpicture}
\caption{The relations $\lez$, $\lek$ and $\lejk$.}\label{fig:orders}
\end{figure}

For all $k\in \{1, \dots, n\}$, let $g_k \colon \M_k \to \M_0$ be the homomorphism that maps $\mbf$ and $\zero$ to $\mbf$ and maps $\mbt$ and $\one$ to $\mbt$.

\begin{thm}\label{cor:bigmultiduality}
Let $n\in \omega\comp\{0\}$. Define the multi-sorted alter ego
\[
\MT_n = \langle M_0\du M_1\du \cdots \du M_n;
\mathcal G_{(n)}, \mathcal{S}_{(n)}, \T\rangle,
\]
where
\begin{align*}
\mathcal{G}_{(n)} &= \big\{\,g_k \mid k\in \{1, \dots, n\}\,\big\}, \text{ and}\\
\mathcal S_{(n)} &= \{\,{\lek} \mid k\in \{0, \dots, n\}\,\} \cup \big\{\,{\lejk} \mid j, k\in \{1, \dots,n\} \text{ with } j < k\,\big\}.
\end{align*}
\begin{enumerate}[\normalfont(1)]

\item
The alter ego $\MT_n$ yields a strong, and therefore full,
 duality on $\CV_n =
 \HSP(\JB_n) = \ISP(\{\M_0, \M_1, \dots, \M_n\})$.

\item $|\mathcal S_{(n)}\cup \mathcal{G}_{(n)}| = \frac12(n^2 + 3n + 2)$.

\end{enumerate}
\end{thm}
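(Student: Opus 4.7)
The starting point is Theorem~\ref{MultiNUDT}: since each $\M_k$ is finite, lattice-based, subdirectly irreducible, and has every element named by a constant, the ``full'' alter ego carrying \emph{all} compatible binary relations $\bigcup\{\,\Sub(\M_j\times\M_k)\mid j,k\in\{0,\ldots,n\}\,\}$ and \emph{all} homomorphisms $\bigcup\{\,\CA(\M_j,\M_k)\mid j,k\in\{0,\ldots,n\}\,\}$ between the sorts yields a strong, and hence full, duality on $\CV_n$. The task is therefore to prune this maximal structure down to the claimed set $\mathcal G_{(n)}\cup \mathcal S_{(n)}$ without losing either duality or strength.

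The pruning proceeds in two stages. Stage one: since intersections of compatible relations are again compatible and intersections are redundant for dualisation, it suffices to retain the meet-irreducible elements of each lattice $\Sub(\M_j\times\M_k)$. This is where the main substance lies, and it is to be carried out in Section~\ref{sec:CompOnHoms} through a careful analysis of $\Sub(\A\times \B)$ for arbitrary non-trivial homomorphic images $\A,\B$ of $\JB_n$. The expectation, read off the geometry of $\M_0$ and of the six-element $\M_k$'s, is that the meet-irreducibles fall into three families matched exactly by our alter ego: the single-sort orders $\lek$, the cross-sort stretched orders $\lejk$, and the graphs $\graph(g_k)$ of the homomorphisms $\M_k \to \M_0$.

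Stage two: wherever a meet-irreducible of $\Sub(\M_j\times\M_k)$ does not itself appear in $\mathcal G_{(n)}\cup\mathcal S_{(n)}$---typically a relation between $\M_0$ and some $\M_k$ whose structure ``factors through'' $g_k$---one shows it is entailed by the composite of a relation in $\mathcal S_{(n)}$ with one of the $g_k$'s, and may therefore be deleted. Once duality is established via this pruned set, strength comes almost for free: strength in Theorem~\ref{MultiNUDT} amounts to injectivity of the alter ego in the dual topological quasivariety, and this property is preserved on passing to an alter ego whose structure entails, and is entailed by, the original, which is exactly our situation.

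Part~(2) is a direct count: $|\mathcal G_{(n)}|=n$; there are $n+1$ single-sort orders $\lek$ for $k\in\{0,\dots,n\}$; and the cross-sort orders $\lejk$ with $1\le j<k\le n$ contribute a further $\binom{n}{2}$, giving
\[
|\mathcal G_{(n)}\cup \mathcal S_{(n)}| \;=\; n+(n+1)+\binom{n}{2} \;=\; \tfrac{1}{2}(n^2+3n+2).
\]
The principal obstacle, by a clear margin, is stage one: the subuniverse lattices $\Sub(\M_j\times\M_k)$ grow rapidly with $n$ (recall $|\Sub(\JB_3^2)|=200$), and pinning down their meet-irreducibles without brute enumeration is the real technical content, deferred to Section~\ref{sec:CompOnHoms}. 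The remaining entailment reductions, the promotion to strength, and the counting step should be routine by comparison.
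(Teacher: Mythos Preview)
Your overall architecture is right---start from Theorem~\ref{MultiNUDT}, restrict to meet-irreducibles, then entail away the surplus---but your prediction of what the meet-irreducibles actually are is wrong, and this matters for the entailment step.

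The meet-irreducibles of $\Sub(\M_j\times\M_k)$ are computed in Theorem~\ref{thm:MImulti} via Theorem~\ref{thm:MI}. For every pair $(j,k)$ with $j\le k$ there are, in addition to the relations you name, two \emph{maximal} proper subuniverses $S_\le^{jk}$ and $S_\ge^{jk}$ (the specialisations of $S_\le$, $S_\ge$ from Subsection~\ref{subsec:S_le}); these are always meet-irreducible by Lemma~\ref{lem:maximal}. So, for instance, $\Sub(\M_k\times\M_k)$ has four meet-irreducibles $\lek,\gek,S_\le^{kk},S_\ge^{kk}$, and $\Sub(\M_j\times\M_k)$ for $1\le j<k$ has three: $\lejk,S_\le^{jk},S_\ge^{jk}$. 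Your list omits all of the $S_\le^{jk}$, $S_\ge^{jk}$ and the converses $\gek$. Conversely, $\graph(g_k)$ is \emph{not} meet-irreducible: since every element of each $\M_i$ is a constant, $\graph(g_k)$ is precisely the set $K$ of constants of $\M_k\times\M_0$, hence the \emph{bottom} of $\Sub(\M_k\times\M_0)$, not a meet-irreducible.

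The entailment of the surplus meet-irreducibles is therefore more substantial than ``factoring through $g_k$ via composition''. The $\gek$ are handled by converse. The key observation for the $S_\le^{jk}$, $S_\ge^{jk}$ is Lemma~\ref{lem:Sle=Sgle}: each equals $(g_j,g_k)^{-1}(\lez)$ or $(g_j,g_k)^{-1}(\gez)$ (with $g_0=\id_{M_0}$), and the multi-sorted analogue of \emph{action by an endomorphism} gives $\{g_j,g_k,\lez\}\vdash (g_j,g_k)^{-1}(\lez)$. This is inverse image, not relational product, and it is the reason the maps $g_k$ must be present in the alter ego. Your strength argument and the count in~(2) are fine; the paper's route to strength is slightly more concrete (observe that $\mathcal G$ consists only of the identities and the $g_k$, and identities may always be dropped), but yours via structural equivalence would also work.
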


\begin{eg}\label{eg:multi:n=1}
The multi-sorted alter ego
\[
\MT_1 =\langle M_0\du M_1;
g_1, \lez,  \le^1, \T\rangle
\]
yields a strong duality on the variety $\CV_1 = \HSP(\JB_1)$, and the multi-sorted alter ego
\[
\MT_2 =\langle M_0\du M_1\du M_2;
g_1,g_2,\lez, \le^1, \le^2, \le^{12},   \T\rangle
\]
yields a strong duality on the variety $\CV_2 = \HSP(\JB_2)$. 
\end{eg}

\begin{rem}[Comparing the dualities]\label{rem:compare}
Perhaps because we are more accustomed to working with orders rather than quasi-orders, the multi-sorted duality for the variety appears to be simpler than the single-sorted duality for the quasivariety. For example, the duality for the quasivariety $\CJ_1$ has an alter ego consisting of the order $S_{1,0}$ and the quasi-order $S_{1,1}$ on the six-element base set~$J_1$. The multi-sorted duality for the variety $\CV_1$ has an alter ego with two sorts: the four-element set $M_0$, equipped with the knowledge order $\lez$, and the six-element set $M_1$, equipped with the order $\le^1$ (which is the order $S_{1,0}$ on $J_1$ interpreted on $M_1$), along with a connecting map $g_1\colon M_1 \to M_0$.
Since the free algebras in the variety $\CV_n$ lie in the quasivariety $\CJ_n$,  we can use either the single sorted-duality for $\CJ_n$ or the multi-sorted duality for $\CV_n$ to find the free algebras in~$\CV_n$.
The authors used both dualities to verify that the size of the free algebra $\F_{\CV_1}\!(1)$ is 266. That is, we found all maps from $J_1$ to $J_1$ that preserve $S_{1,0}$ and $S_{1,1}$, thus representing $\F_{\CV_1}\!(1)$ as a subalgebra of $\JB_1^{J_1}$, and we found all multi-sorted maps from $M_0\cup M_1$ to $M_0\cup M_1$ that preserve $\lez$, $\le^1$ and $g_1$, thus representing $\F_{\CV_1}\!(1)$ as a subalgebra of $\M_0^{M_0}\times \M_1^{M_1}$. We found the latter calculation much easier as we were first able to find the 36
maps from $M_0$ to $M_0$ that preserve $\lez$ and then to link each of these via $g_1$ to a number of $\le^1$-preserving maps from $M_1$ to $M_1$.
\end{rem}

\section{Subuniverses of products of homomorphic images of $\JB_n$}\label{sec:CompOnHoms}

The Special NU Duality Theorem~\ref{NUDT} and the Special
Multi-sorted NU Strong Duality Theorem~\ref{MultiNUDT} tell us that
the set of all subuniverses of $\JB_n^2$ yields a duality on the
quasivariety $\CJ_n$ and that the set of all subuniverses of
$\M_j\times \M_k$, for $j,k\in \{0, \dots, n\}$, yields a duality on
the variety~$\CV_n$. It is always possible to restrict to
subuniverses that are meet-irreducible in $\Sub(\JB_n^2)$ and in
${\Sub(\M_j\times \M_k)}$---see Definition~\ref{def:entails} and the
discussion that follows it. Since $\M_k$ is a non-trivial
homomorphic image of $\JB_n$, we will treat both cases
simultaneously and describe the meet-irreducible members of the
lattice $\Sub(\A\times \B)$, where $\A$ and $\B$ are non-trivial
homomorphic images of~$\JB_n$.

We shall use the following observations, usually without comment.
\begin{enumerate}[(a)]

\item Let $u\colon \JB_n\to \A$ and $v\colon \JB_n\to \B$ be homomorphisms. Since each element of $\JB_n$
is a constant, every subuniverse of $\A \times \B$ contains the set
$K = \{\, (u(c), v(c))\mid c\in J_n\,\}$ of constants of $\A \times \B$.

\item Let $u\colon \JB_n\to \A$ be a homomorphism with $\A$ non-trivial. For all
$c \in J_n$, if $c\not\in \{\top, \bot\}$, then $u(c) \not\in
\{\top, \bot\}$. (For example, if $u(c)=\top$, then in $\A$ we have
$\bot = u(\bot) = u(c \otimes \neg c) = u(c) \otimes \neg(u(c)) =
\top \otimes \top =\top$, a~contradiction.)

\item Let $u\colon \JB_n\to \A$ be a surjective homomorphism with $\A$ non-trivial.

\begin{enumerate}[(i)]

\item
The bilattice reduct of $\A$ is isomorphic to the bilattice reduct of $\JB_k$, for some $k~\in~\{0, \dots, n\}$,

\item $u$ is the unique homomorphism from $\JB_n$ to $\A$
 (since $u$ preserves the constants).
\end{enumerate}
\end{enumerate}

We begin with a lemma that gives simple sufficient conditions for a subuniverse of $\A\times \B$ to contain large rectangular blocks, that is, large subsets of the form $A'\times B'$, for some $A'\subseteq A$ and $B'\subseteq B$.

Given $\A \in \CV_n$, let $\bsF_\A$ and $\bsT_\A$ denote, respectively, 
the sets of `false' constants and `true' constants in~$\A$.
Note that $\bF = \bsF_{\JB_n}$ and $\bT = \bsT_{\JB_n}$.

\begin{lem}\label{lem:rectangle1}
Let $n \in \omega$, let $\A$ and $\B$ be non-trivial homomorphic images of $\JB_n$ and let $S$ be a subuniverse of $\A \times \B$.
\begin{enumerate}[ \normalfont(a)]
\item The following are equivalent:
\begin{enumerate}[\normalfont (i)]
\item $(a,\top) \in S$ for some $a \in A\comp \{\top\}$;
\item $(\bot,b) \in S$ for some $b \in B\comp\{\bot\}$;
\item $A\times \{\top\}\subseteq S$;
\item $\{\bot\}\times B \subseteq S$.
\end{enumerate}

\item  The following are equivalent:
\begin{enumerate}[\normalfont(i)]
\item $(\top, b) \in S$ for some $b \in B\comp\{\top\}$;
\item $(a, \bot) \in S$ for some $a \in A\comp\{\bot\}$;
\item $\{\top\} \times B \subseteq S$;
\item $A\times \{\bot\} \subseteq S$.
\end{enumerate}

\item If $S$ satisfies any of the eight conditions listed in \textup{(a)} and \textup{(b)}, then $\bsF_\A \times \bsF_\B \subseteq S$ and $\bsT_\A \times \bsT_\B \subseteq S$.
\end{enumerate}
\end{lem}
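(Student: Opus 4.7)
The plan is to reduce each of (a) and (b) to the existence in $S$ of a single canonical ``corner'' pair---$(\bot,\top)$ for (a) and $(\top,\bot)$ for (b)---and then to fill out the required rectangles by combining that corner with the constant pairs $(u(c),v(c)) \in S$, where $u\colon \JB_n\to\A$ and $v\colon \JB_n\to\B$ are the unique surjective homomorphisms. Throughout I will rely on the three observations stated just before the lemma: $u$ and $v$ fix $\top$ and $\bot$; neither maps a non-$\{\top,\bot\}$ element into $\{\top,\bot\}$; and every constant pair $(u(c),v(c))$ lies in $S$.

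For part (a), the implications (iii)$\Rightarrow$(i) and (iv)$\Rightarrow$(ii) are immediate from the non-triviality of $\A$ and $\B$. The content is the reverse direction. Given a witness $(a,\top) \in S$ with $a \neq \top$, I apply $\neg$ componentwise (using $\neg\top=\top$) to obtain $(\neg a,\top) \in S$, and then
\[
(a,\top)\otimes(\neg a,\top) \;=\; (a\otimes\neg a,\ \top) \;=\; (\bot,\top),
\]
where the last equality uses the identity $c\otimes\neg c=\bot$, valid for every $c\in J_n\setminus\{\top\}$. A witness $(\bot,b) \in S$ with $b \neq \bot$ is handled dually by the knowledge-join $(\bot,b)\oplus(\bot,\neg b) = (\bot,\top)$. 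From $(\bot,\top)\in S$, the pointwise identities
\[
(\bot,\top)\oplus(u(c),v(c)) = (u(c),\top), \qquad (\bot,\top)\otimes(u(c),v(c)) = (\bot,v(c))
\]
deliver (iii) and (iv) once $c$ ranges over $J_n$ and the surjectivity of $u$ and $v$ is invoked.

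Part (b) will follow by the precisely dual argument: extract the corner $(\top,\bot)$ from $(\top,b)\otimes(\top,\neg b)$ or $(a,\bot)\oplus(\neg a,\bot)$, and propagate via $(\top,\bot)\otimes(u(c),v(c)) = (u(c),\bot)$ and $(\top,\bot)\oplus(u(c),v(c)) = (\top,v(c))$. For part (c), I will use the truth-order structure of $\JB_n$ (Figure~\ref{fig:Jn}), in which $\mbf_i \le_\t \top$ and $\mbf_i \le_\t \bot$, and symmetrically $\top,\bot \le_\t \mbt_i$, for every $i$. Under the hypotheses of (a), for $f_\A = u(\mbf_i) \in \bsF_\A$ and $f_\B = v(\mbf_j) \in \bsF_\B$ the truth-meet $(u(\mbf_i),\top) \wedge (\bot,v(\mbf_j))$ evaluates to $(u(\mbf_i),v(\mbf_j))$, giving $\bsF_\A \times \bsF_\B \subseteq S$; the corresponding truth-join yields $\bsT_\A \times \bsT_\B \subseteq S$. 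Under the hypotheses of (b) the same $\le_\t$-identities handle the meets and joins of $(u(\mbf_i),\bot)$ with $(\top,v(\mbf_j))$ and of the $\mbt$-analogues. The main obstacle, as I see it, is the corner-extraction step: this is where one must combine the $\neg$-invariance of $\top,\bot$ with the bilattice identities $c\otimes\neg c=\bot$ and $c\oplus\neg c=\top$ to straighten an arbitrary witness into canonical form, after which propagation is purely routine.
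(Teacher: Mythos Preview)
Your proof is correct and follows essentially the same route as the paper's: both arguments hinge on extracting the corner pair $(\bot,\top)$ (respectively $(\top,\bot)$) from a witness via $x\otimes\neg x=\bot$ or $x\oplus\neg x=\top$, and then propagating to the full rectangles by combining with constant pairs $(u(c),v(c))$; part~(c) is handled identically using the truth-order meet (for $\bsF$) and join (for $\bsT$). Your organisation is marginally cleaner in that you reduce all four conditions in (a) to the single statement ``$(\bot,\top)\in S$'', whereas the paper proves (i)$\Rightarrow$(iii), (ii)$\Rightarrow$(iv) and (i)$\Rightarrow$(ii) as separate implications---but the computations are the same. One small point of phrasing: when you invoke ``$c\otimes\neg c=\bot$, valid for every $c\in J_n\setminus\{\top\}$'' you are applying it to $a\in A$, not to an element of $J_n$; the paper makes this transfer explicit by recording that $\A$ and $\B$, having bilattice reducts isomorphic to some~$\JB_k$, themselves satisfy $x\ne\top\Rightarrow x\otimes\neg x=\bot$ and $x\ne\bot\Rightarrow x\oplus\neg x=\top$.
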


\begin{proof}
By symmetry, to prove both (a) and (b),
it suffices to prove (a). Let $u\colon \JB_n\to \A$, $v\colon \JB_n\to \B$ be surjective homomorphisms. Since the
bilattice reduct of a non-trivial homomorphic image of $\JB_n$ is
isomorphic to the  bilattice reduct of~$\JB_k$, for some $k\in \{0,
\dots, n\}$, both $\A$ and $\B$ satisfy
\[
x \ne \top \implies x \otimes \neg x = \bot \qquad\text{and}\qquad
x \ne \bot \implies x \oplus \neg x = \top.\tag{$\dagger$}
\]
The implications (a)(iii) $\Rightarrow$ (a)(i) and (a)(iv) $\Rightarrow$ (a)(ii) are of course trivial. We now use $(\dagger)$ to prove
the implications
(a)(i)~$\Rightarrow$~(a)(iii) and (a)(ii)~$\Rightarrow$~(a)(iv).

(a)(i)~$\Rightarrow$~(a)(iii):
Assume that (a)(i) holds, i.e., there exists $a \in A\comp \{\top\}$ with $(a,\top) \in S$. Let
$a'\in A$ and choose $c\in J_n$ with $u(c) = a'$. By $(\dagger)$ we
have $a\otimes \neg a = \bot$ and thus
\[
(a', \top) = (\bot, \top) \oplus (a', v(c)) = \big((a,\top) \otimes \neg(a,\top)\big) \oplus (u(c), v(c))\in S,
\]
as $(a, \top), (u(c), v(c)) \in S$. So (a)(iii) holds. The implication (a)(ii)~$\Rightarrow$~(a)(iv) is
similar.

(a)(i)~$\Rightarrow$~(a)(ii): Assume again that (a)(i) holds
and let $a \in A\comp \{\top\}$ with $(a,\top) \in S$.
If $a = \bot$, then we can conclude (a)(ii) immediately as $B$ is non-trivial. Assume now that $a \neq \bot$ and let $c\in J_n$ with $u(c) = a$. Note that $c\ne \bot$. As $a\ne \top$, by $(\dagger)$ we have
$a\otimes \neg a = \bot$ and thus
\[
(\bot,\neg v(c)) = (a, \top) \otimes \neg(a, v(c)) = (a,\top) \otimes \neg(u(c), v(c)) \in S,
\]
as $(a, \top), (u(c),v(c))\in S$. Note that $c\ne \bot$ implies $v(c) \ne \bot$, since $\B$ is non-trivial, and consequently $\neg v(c)\ne \bot$. Hence (a)(ii) holds. The implication (a)(ii)~$\Rightarrow$~(a)(i) holds by symmetry and duality.

(c) By symmetry, it is enough to assume that
the equivalent conditions in (a) hold. Let $(a, b) \in \bsF_\A \times \bsF_\B$. Then $(a, b) = (a,\top) \wedge (\bot, b)\in S$ as $(a,\top),  (\bot,b) \in S$, whence $\bsF_\A \times \bsF_\B \subseteq S$. Similarly, $\bsT_\A \times \bsT_\B \subseteq S$.
\end{proof}

The following lemma provides a test for whether a subuniverse of $\A \times \B$ is proper.

\begin{lem}\label{lem:AtimesB}
Let $n \in \omega$, let $\A$ and $\B$ be non-trivial homomorphic images of $\JB_n$ and let $S$ be a subuniverse of $\A\times \B$. The following are equivalent:
\begin{enumerate}[ \normalfont(i)]

\item $S = A\times B$;

\item $(\top, \bot), (\bot, \top)  \in S$;

\item $(\bsF_\A\times \bsT_\B)\cap S \ne \varnothing$;

\item $(\bsT_\A\times \bsF_\B)\cap S \ne \varnothing$.

\end{enumerate}
\end{lem}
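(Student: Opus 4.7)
The plan is to dispatch (i)$\Rightarrow$(ii), (iii), (iv) by inspection, to observe (iii)$\Leftrightarrow$(iv) by applying $\neg$ componentwise (since $\neg$ reverses $\le_\t$, it exchanges $\bsF_\A$ with $\bsT_\A$ and $\bsF_\B$ with $\bsT_\B$, so any witness pair for (iii) becomes a witness pair for (iv) and vice versa), and then to reduce the real work to the chain (iii)$\Rightarrow$(ii)$\Rightarrow$(i). Lemma~\ref{lem:rectangle1} will do almost all of the lifting: once we put one pair of a particular shape into $S$, its conclusions propagate that to large rectangular blocks.

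For (iii)$\Rightarrow$(ii), I would start from $(f,t)\in S$ with $f\in\bsF_\A$ and $t\in\bsT_\B$. Surjectivity of $v\colon\JB_n\to\B$ yields some $c\in\bT$ with $v(c)=t$ (the true constants of $\B$ are exactly $v(\mbt_0),\dots,v(\mbt_n)$), and then $u(c)\in\bsT_\A$ and the pair $(u(c),t)=(u(c),v(c))$ lies in $S$ as the interpretation of the constant $c$ of $\JB_n$ in $\A\times\B$. The defining identities $\mbt_i\oplus\mbf_j=\top$ and $\mbt_i\otimes\mbf_j=\bot$ pass to every homomorphic image of $\JB_n$, so
\[
(f,t)\oplus(u(c),t)=(\top,t)\in S \quad\text{and}\quad (f,t)\otimes(u(c),t)=(\bot,t)\in S.
\]
Because $t\in\bsT_\B$ forces $t\notin\{\top,\bot\}$, parts (b) and (a) of Lemma~\ref{lem:rectangle1} respectively give $\{\top\}\times B\subseteq S$ and $\{\bot\}\times B\subseteq S$, whence $(\top,\bot),(\bot,\top)\in S$.

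For (ii)$\Rightarrow$(i), having both $(\top,\bot)$ and $(\bot,\top)$ in $S$ triggers Lemma~\ref{lem:rectangle1}(a),(b) to place all four edge rectangles $A\times\{\top\}$, $A\times\{\bot\}$, $\{\top\}\times B$, $\{\bot\}\times B$ inside $S$, and part (c) then adds $\bsF_\A\times\bsF_\B$ and $\bsT_\A\times\bsT_\B$. The two remaining rectangles are produced by knowledge meets with $\top$: for $(a,b)\in\bsF_\A\times\bsT_\B$ we have $(a,\top)\otimes(\top,b)=(a,b)\in S$ because $\top$ is the identity of $\otimes$; the case $\bsT_\A\times\bsF_\B$ is identical. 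Hence $S=A\times B$. The only nontrivial computation is in (iii)$\Rightarrow$(ii), and matching the auxiliary constant's second coordinate to $t$ makes both the $\oplus$ and $\otimes$ calculations on the second coordinate collapse immediately, so I foresee no real obstacle.
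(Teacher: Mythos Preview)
Your proof is correct. The overall logical structure matches the paper's (the cycle (i)$\Rightarrow$(iv)$\Leftrightarrow$(iii)$\Rightarrow$(ii)$\Rightarrow$(i), with (iii)$\Leftrightarrow$(iv) via $\neg$), but the two substantive implications are handled differently.

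For (iii)$\Rightarrow$(ii), you match a constant to the \emph{second} coordinate $t$, so that both $\oplus$ and $\otimes$ with $(u(c),t)$ fix the second coordinate and collapse the first to $\top$ and $\bot$; you then feed $(\top,t)$ and $(\bot,t)$ into Lemma~\ref{lem:rectangle1}. The paper instead matches a constant to the \emph{first} coordinate $a$, computes $(a,\bot)=(u(c),v(c))\otimes(a,b)$ directly, and then gets $(\top,\bot)$ via $(a,\bot)\oplus\neg(a,\bot)$ without invoking Lemma~\ref{lem:rectangle1}. For (ii)$\Rightarrow$(i), you again route through Lemma~\ref{lem:rectangle1} to pick up the edge rectangles and the blocks $\bsF_\A\times\bsF_\B$, $\bsT_\A\times\bsT_\B$, and then close the two mixed rectangles with the neat observation $(a,\top)\otimes(\top,b)=(a,b)$. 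The paper bypasses all of this with the single formula
\[
(a,b)=\big((u(c),v(c))\otimes(\top,\bot)\big)\oplus\big((u(d),v(d))\otimes(\bot,\top)\big),
\]
which works uniformly for every $(a,b)\in A\times B$. Your approach is more modular and reuses Lemma~\ref{lem:rectangle1} to good effect; the paper's is more self-contained and slightly shorter. Either is perfectly acceptable.
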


\begin{proof} (i) implies (iv) is trivial and (iv) is equivalent to (iii) by applying~$\neg$. Now assume (iii), and let
$(a, b)\in (\bsF_\A\times \bsT_\B)\cap S$.
Let $u\colon \JB_n\to \A$ and $v\colon \JB_n\to \B$ be surjective homomorphisms and let $c\in J_n$ with $u(c) = a$.
We have $c\in \bF$ since $a\in \bsF_\A$, thus $v(c)\in \bsF_\B$, whence $v(c)\otimes b = \bot$. Hence
\[
(a, \bot) = (a, v(c)) \otimes (a, b) = (u(c), v(c)) \otimes (a, b) \in S,
\]
since $(u(c), v(c))\in S$, and so
\[
(\top, \bot) = (a, \bot) \oplus \neg (a, \bot) \in S.
\]
Similarly, $(\bot, \top) \in S$. Hence (ii) holds.
Finally, assume (ii). Let $(a, b)\in A \times B$ and assume that $c, d \in J_n$ with $u(c) = a$ and $v(d) = b$. Then
\[
(a,b) = (u(c), v(d)) =\big((u(c), v(c)) \otimes (\top, \bot)\big) \oplus \big((u(d), v(d))\otimes (\bot, \top)\big) \in S,
\]
since $(u(c), v(c)), (u(d), v(d))\in S$. Hence (i) holds.
\end{proof}

\subsection{The relations $S_\le$ and $S_\ge$}\label{subsec:S_le}

Let $n\in \omega$ and fix two surjective homomorphisms $u\colon \JB_n\to \A$ and $v\colon \JB_n\to \B$ with $\A$ and $\B$ non-trivial. Define subsets $S_\le$ and $S_\ge$ of $A\times B$ by
\begin{align*}
S_\le &= \big(A\times \{\top\}\big)\cup \big(\{\bot\}\times B\big) \cup \big(\bsF_\A \times \bsF_\B\big) \cup \big(\bsT_\A \times \bsT_\B\big),\\ S_\ge &= \big(A\times \{\bot\}\big)\cup \big(\{\top\}\times B\big) \cup \big(\bsF_\A \times \bsF_\B\big) \cup \big(\bsT_\A \times \bsT_\B\big). \end{align*}
Note that if $\A$ and $\B$ are both $\JB_n$, then $S_\le$ and $S_\ge$ are the relations $S_{n,n}$ and $S\convsub{n,n}$, respectively.

Since, up to isomorphism, $\A$ has the same  bilattice reduct as $\JB_k$, for some $k \in \{0, \dots, n\}$,
there is a unique homomorphism $u_0 \colon \A\to \JB_{0, n}$. Similarly there is a unique homomorphism $v_0 \colon \B\to \JB_{0, n}$. We shall use these homomorphisms to show that $S_\le$ and $S_\ge$ are subuniverses of $\A \times \B$. Recall that we denote the knowledge order on $\JB_n$ by $\le_\k^n$. We shall also denote the knowledge order on $\JB_{0, n}$ by~$\le_\k^0$.

\begin{lem}\label{lem:Sle=Sgle}
Let $n\in \omega$ and let $u\colon \JB_n\to \A$ and $v\colon \JB_n\to \B$ be surjective homomorphisms with $\A$ and $\B$ non-trivial. Then $S_\le$ and $S_\ge$ are subuniverses of $\A \times \B$. Indeed,
\begin{align*}
\sg_{\A\times \B}\big((u, v)(\le_\k^n)\big) &= S_\le = (u_0, v_0)^{-1}(\le_\k^0),\\
\sg_{\A\times \B}\big((u, v)(\ge_\k^n)\big) &= S_\ge = (u_0, v_0)^{-1}(\ge_\k^0).
\end{align*}
\end{lem}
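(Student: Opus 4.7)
I will prove the statements for $S_\le$ in full; the statements for $S_\ge$ follow by the same argument with $\top$ and $\bot$ interchanged, using the pair $(\top,\bot)\in (u,v)(\ge_\k^n)$ in place of $(\bot,\top)\in (u,v)(\le_\k^n)$. The three claims for $S_\le$---that it is a subuniverse, that it equals the preimage, and that it equals the generated subuniverse---will be handled by (i) describing $\le_\k^0$ explicitly, (ii) computing the preimage and recognising it as a subuniverse, and then (iii) pinning down the generated subuniverse with the help of Lemma~\ref{lem:rectangle1}.

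First I would spell out the knowledge order on the four-element bilattice $\JB_{0,n}$ as
\[
{\le_\k^0} \;=\; \bigl(\{\bot\}\times J_{0,n}\bigr) \cup \bigl(J_{0,n}\times\{\top\}\bigr) \cup \{(\mbf_0, \mbf_0),\, (\mbt_0, \mbt_0)\}.
\]
Using observation~(b) and the fact that $u_0$ preserves constants, the fibres of $u_0$ are $u_0^{-1}(\top)=\{\top\}$, $u_0^{-1}(\bot)=\{\bot\}$, $u_0^{-1}(\mbf_0)=\bsF_\A$ and $u_0^{-1}(\mbt_0)=\bsT_\A$, and similarly for $v_0$. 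A direct case analysis on $(u_0(a), v_0(b))$ then matches the four pieces of $\le_\k^0$ with the four pieces of $S_\le$, giving $(u_0, v_0)^{-1}(\le_\k^0) = S_\le$. Since $\JB_{0,n}$ is term equivalent to $\FOUR$ and $\FOUR$ is interlaced, all four bilattice operations preserve $\le_\k^0$; negation preserves $\le_\k^0$ by definition of a bilattice; and every constant pair $(c,c)$ lies in $\le_\k^0$. Therefore $\le_\k^0$ is a subuniverse of $\JB_{0,n}^2$, and hence $S_\le = (u_0, v_0)^{-1}(\le_\k^0)$ is a subuniverse of $\A\times \B$.

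Next, for the equality $\sg_{\A\times\B}((u,v)(\le_\k^n)) = S_\le$, the inclusion $\subseteq$ is immediate: by observation~(c)(ii), both composites $u_0\circ u$ and $v_0\circ v$ equal the unique homomorphism $\pi\colon \JB_n \to \JB_{0,n}$, which is a bilattice homomorphism and hence sends $\le_\k^n$ into $\le_\k^0$. Therefore $(u, v)(\le_\k^n) \subseteq (u_0, v_0)^{-1}(\le_\k^0) = S_\le$, and since $S_\le$ is already known to be a subuniverse, $\sg_{\A\times\B}((u, v)(\le_\k^n)) \subseteq S_\le$. For the reverse inclusion, let $S := \sg_{\A\times\B}((u,v)(\le_\k^n))$. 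The pair $(\bot,\top)=(u(\bot),v(\top))$ lies in $S$ since $\bot \le_\k^n \top$ in $\JB_n$. Applying Lemma~\ref{lem:rectangle1}(a) with $a=\bot$ yields $A\times\{\top\},\ \{\bot\}\times B \subseteq S$, and Lemma~\ref{lem:rectangle1}(c) then gives $\bsF_\A\times\bsF_\B,\ \bsT_\A\times\bsT_\B \subseteq S$. These four rectangles together are exactly $S_\le$, so $S_\le\subseteq S$.

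\textbf{Main obstacle.} There is no genuine obstacle once Lemma~\ref{lem:rectangle1} is available: the essential observation is that a single corner pair $(\bot,\top)$ in a subuniverse already forces the entire four-rectangle shape of $S_\le$. The only routine bookkeeping is the fibre calculation for $(u_0, v_0)^{-1}(\le_\k^0)$, which is dictated entirely by how $u_0$ and $v_0$ act on constants, together with the elementary verification that the interlaced property of $\FOUR$ makes $\le_\k^0$ a subuniverse of $\JB_{0,n}^2$.
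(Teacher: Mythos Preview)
Your proof is correct and follows essentially the same route as the paper: you identify $S_\le$ with the preimage $(u_0,v_0)^{-1}(\le_\k^0)$ to see it is a subuniverse, and then sandwich $(u,v)(\le_\k^n)\subseteq S_\le\subseteq \sg_{\A\times\B}\bigl((u,v)(\le_\k^n)\bigr)$ using Lemma~\ref{lem:rectangle1} for the second inclusion. The only cosmetic differences are that the paper checks $(u,v)(\le_\k^n)\subseteq S_\le$ by direct inspection of where $u$ and $v$ send $\bF$ and $\bT$, whereas you route it through the factorisation $u_0\circ u=v_0\circ v=\pi$; and your description of $\le_\k^0$ is slightly more explicit (you include the diagonal pairs $(\mbf_0,\mbf_0)$ and $(\mbt_0,\mbt_0)$, which the paper's shorthand leaves implicit).
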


\begin{proof}
It suffices to prove the result for $S_\le$. Since $u$ satisfies $u(\bF) \subseteq \bsF_\A$ and $u(\bT) \subseteq \bsT_\A$, and similarly for $v$, we have $(u, v)(\le_\k^n)\subseteq S_\le$. It follows at once from Lemma~\ref{lem:rectangle1}
that $S_\le \subseteq \sg_{\A\times \B}\big((u, v)(\le_\k^n)\big)$. Hence
\[
(u, v)(\le_\k^n)\subseteq S_\le \subseteq \sg_{\A\times \B}\big((u, v)(\le_\k^n)\big).
\]
Since $\le_\k^0$ is a subuniverse of $\JB_{0,n}^2$, it follows that
$(u_0, v_0)^{-1}(\le_\k^0)$ is a subuniverse of $\A\times \B$. As
the knowledge order on the bilattice
$\JB_{0,n}$ is given by ${\le}_\k^0 =
\big(J_{0,n}\times \{\top\}\big) \cup  \big(\{\bot\} \times
J_{0,n}\big)$, it is clear that
\[
(u_0, v_0)^{-1}(\le_\k^0) = \big(A \times \{\top\}\big) \cup \big(\{\bot\} \times B\big) \cup \big(\bsF_\A \times \bsF_\B\big) \cup \big(\bsT_\A \times \bsT_\B\big) = S_\le.
\]
Hence $S_\le$ is a subuniverse of $\A \times \B$, and
$\sg_{\A\times \B}\big((u, v)(\le_\k^n)\big) = S_\le$ follows immediately.
\end{proof}

\begin{lem}\label{lem:maximal}
Let $n\in \omega$ and let $u\colon \JB_n\to \A$ and $v\colon \JB_n\to \B$ be surjective homomorphisms with $\A$ and $\B$ non-trivial. Then $S_\le$ and $S_\ge$ are the only maximal proper subuniverses of $\A \times \B$.
\end{lem}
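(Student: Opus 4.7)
The plan is to prove the two statements simultaneously: that $S_\le$ and $S_\ge$ are proper subuniverses, and that every proper subuniverse of $\A\times\B$ is contained in one of them. The key observation that makes this work is the precise relationship between $S_\le \cup S_\ge$ and the ``forbidden'' sets of Lemma~\ref{lem:AtimesB}.

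First I would verify that $S_\le$ and $S_\ge$ are proper. They are subuniverses by Lemma~\ref{lem:Sle=Sgle}. Inspecting the definitions shows directly that $(\top,\bot) \notin S_\le$ and $(\bot,\top) \notin S_\ge$, so neither equals $A\times B$. Using non-triviality of $\A$ and $\B$, the points $(\bot,\top) \in S_\le \setminus S_\ge$ and $(\top,\bot) \in S_\ge \setminus S_\le$ show the two are incomparable.

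The crux is showing that every proper subuniverse $S$ is contained in $S_\le$ or $S_\ge$. First I would compute
\[
(A\times B) \setminus (S_\le \cup S_\ge) = (\bsF_\A \times \bsT_\B) \cup (\bsT_\A \times \bsF_\B),
\]
so by Lemma~\ref{lem:AtimesB}(iii),(iv), any proper subuniverse is automatically disjoint from this complement, giving $S \subseteq S_\le \cup S_\ge$. Next I would compute
\begin{align*}
S_\ge \setminus S_\le &= \big((A\setminus\{\bot\})\times\{\bot\}\big) \cup \big(\{\top\}\times(B\setminus\{\top\})\big),\\
S_\le \setminus S_\ge &= \big((A\setminus\{\top\})\times\{\top\}\big) \cup \big(\{\bot\}\times(B\setminus\{\bot\})\big),
\end{align*}
and argue by contradiction: assume $S$ contains some $(a_1,b_1)\in S_\ge\setminus S_\le$ and some $(a_2,b_2)\in S_\le\setminus S_\ge$. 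In each of the (at most four) sub-cases for $(a_1,b_1)$, one verifies that the point satisfies one of the equivalent conditions (i) or (ii) of Lemma~\ref{lem:rectangle1}(b), and hence $A\times\{\bot\}\subseteq S$, giving $(\top,\bot)\in S$. Symmetrically, $(a_2,b_2)$ triggers Lemma~\ref{lem:rectangle1}(a), yielding $(\bot,\top)\in S$. But then Lemma~\ref{lem:AtimesB} forces $S = A\times B$, contradicting properness.

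Finally, I would conclude: any proper subuniverse is contained in $S_\le$ or $S_\ge$, and these two are themselves proper and incomparable, so they are exactly the maximal proper subuniverses of $\A\times\B$. The main (minor) obstacle is just bookkeeping in the case analysis of the contradiction step, but each case reduces mechanically to the appropriate clause of Lemma~\ref{lem:rectangle1}; nothing deeper is needed.
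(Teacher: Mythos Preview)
Your proof is correct and follows essentially the same route as the paper's: both first use Lemma~\ref{lem:AtimesB} to conclude that any proper subuniverse $S$ satisfies $S\subseteq S_\le\cup S_\ge$, then argue by contradiction that an element of $S\cap(S_\ge\setminus S_\le)$ triggers Lemma~\ref{lem:rectangle1}(b) to force $(\top,\bot)\in S$, symmetrically $(\bot,\top)\in S$, and hence $S=A\times B$ via Lemma~\ref{lem:AtimesB}. You add the explicit verification that $S_\le$ and $S_\ge$ are proper and incomparable, which the paper leaves implicit; otherwise the arguments coincide.
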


\begin{proof}
Let $S$ be a proper subuniverse of $\A\times \B$. We have $(\bsF_\A\times \bsT_\B)\cap S = \varnothing$ and $(\bsT_\A\times \bsF_\B)\cap S = \varnothing$, by Lemma~\ref{lem:AtimesB}. Hence
\[
S\subseteq \big(A \times \{\top, \bot\}\big) \cup  \big(\{\top, \bot\} \times B\big) \cup \big(\bsF_\A \times \bsF_\B\big) \cup \big(\bsT_\A\times \bsT_\B\big). \tag{$*$}
\]
Suppose, by way of contradiction, that $S\nsubseteq S_\le$ and $S\nsubseteq S_\ge$. Thus,
there exist 
$(a,b)\in S\comp S_\le$ and $(c,d)\in S\comp S_\ge$.
By $(*)$ we have
\[
(a, b) \in \big(A\times \{\bot\}\big)\comp \{(\bot, \bot)\} \text{ or } (a, b)\in \big(\{\top\}\times B\big)\comp\{(\top, \top)\}.
\]
By Lemma~\ref{lem:rectangle1}, both cases yield $(\top, \bot)\in S$. Similarly, $(c,d)\in S\comp S_\ge$ yields $(\bot, \top)\in S$. By Lemma~\ref{lem:AtimesB}, this gives $S = A\times B$, a contradiction.
\end{proof}

\subsection{The relations $S_{ab}$}\label{subsec:Sab}

By Lemma~\ref{lem:maximal}, both
$S_\le$ and $S_\ge$ are meet-irreducible in $\Sub(\A\times \B)$. Our next step is to describe the non-maximal meet-irreducibles. To do this we first require a simple lemma. Recall that, given homomorphisms $u\colon \JB_n\to \A$ and $v\colon \JB_n\to \B$, we define
\[
K = \{\, (u(c), v(c))\mid c\in J_n\,\}\subseteq A\times B.
\]

Given $\C\in \CV_n$, let
$\bFC =\langle F_\C; \le_\k\rangle$ and $\bTC =\langle T_\C; \le_\k\rangle$ be the chains consisting of the `false' constants and the `true' constants of $\C$, respectively, in their knowledge order. We shall abbreviate $\mathbf F_{\!\JB_n}$ and $\mathbf T_{\!\JB_n}$ to $\bFn$ and $\bTn$, respectively.

Note that the following lemma says nothing when at least one of $\A$ and $\B$ is isomorphic to $\JB_{0,n}$. For example, if $\A \cong \JB_{0,n}$, then $|\bsF_\A| = 1$ and so $\bsF_\A \times \bsF_\B \subseteq K$;
whence (i), (ii) and (iii) of Part (1) of the lemma are false. 

\begin{lem}\label{lem:abNotInD}
Let $n\in \omega$ and let $u\colon \JB_n\to \A$ and $v\colon \JB_n\to \B$ be surjective homomorphisms with $\A$ and $\B$ non-trivial. Let $(a, b)\in \bsF_\A \times \bsF_\B$.
\begin{enumerate}[\normalfont(1)]

\item The following are equivalent:

\begin{enumerate}[\normalfont(i)]

\item $(a, b)\in \big(\bsF_\A \times \bsF_\B\big) \comp K$;

\item $u^{-1}(a) \cap v^{-1}(b) = \varnothing$;

\item {\normalfont(du)}\, $\max_\k(u^{-1}(a)) <_\k \min_\k(v^{-1}(b))$ \ or

\item[] {\normalfont(ud)}\, $\min_\k(u^{-1}(a)) >_\k \max_\k(v^{-1}(b))$.

\end{enumerate}

\item Condition {\normalfont(du)} holds if and only if $(\down_{\bFA} a \times \up_{\bFB}b)\cap K = \varnothing$.

\item Condition {\normalfont(ud)} holds  if and only if $(\up_{\bFA} a \times \down_{\bFB}b)\cap K = \varnothing$.

\end{enumerate}
\end{lem}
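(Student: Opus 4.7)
My plan is to leverage the explicit description of $\Con(\JB_n)$ from Lemma~\ref{lem:ConJn}: every congruence on $\JB_n$ arises by independently collapsing adjacent pairs $(\mbf_i,\mbf_{i+1})$ together with the corresponding pairs in $\bT$. Hence for any surjective homomorphism $u\colon\JB_n\to\A$ with $\A$ non-trivial, the restriction of $\ker u$ to the chain $\bFn$ partitions $\bFn$ into intervals, and the induced quotient chain is order-isomorphic to $\bFA$; the analogous statement holds for $v$ and $\bFB$. With this observation in hand, the entire lemma becomes a calculation in the single chain~$\bFn$.

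For Part~(1), the equivalence (i)$\Leftrightarrow$(ii) is immediate: since $a\in\bsF_\A$ and $b\in\bsF_\B$, any $c\in J_n$ witnessing $(u(c),v(c))=(a,b)$ must already lie in~$\bF$, so $(a,b)\in K$ is equivalent to $u^{-1}(a)\cap v^{-1}(b)\ne\varnothing$. For (ii)$\Leftrightarrow$(iii), I would use that both $u^{-1}(a)$ and $v^{-1}(b)$ are intervals in the chain $\bFn$ by the first paragraph; two intervals in a chain are disjoint iff one lies strictly above the other in $\le_\k$, giving precisely the dichotomy~(du)/(ud).

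For Part~(2), the key computation is the identity
\[
u^{-1}(\down_{\bFA} a) = \down_{\bFn}\bigl(\max\nolimits_\k u^{-1}(a)\bigr),
\]
which follows because $u$ preserves $\le_\k$ and the blocks of $\ker u$ in $\bFn$ map injectively, in an order-preserving way, onto $\bFA$; the symmetric identity $v^{-1}(\up_{\bFB} b)=\up_{\bFn}(\min_\k v^{-1}(b))$ is analogous. Combining these, the condition $(\down_{\bFA} a\times\up_{\bFB} b)\cap K\ne\varnothing$ becomes a non-empty intersection of a down-set and an up-set in the chain $\bFn$, which is equivalent to $\min_\k v^{-1}(b)\le_\k\max_\k u^{-1}(a)$ --- exactly the negation of~(du). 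Part~(3) follows by interchanging the roles of $\down$ and $\up$ (equivalently of $u$ and~$v$). The only substantive bookkeeping issue is the orientation convention: the indices on the $\mbf_i$ grow as we descend in $\le_\k$, so the $\max_\k$ and $\min_\k$ of an interval $[\mbf_p,\mbf_q]$ (with $p \le q$) correspond to $\mbf_p$ and $\mbf_q$ respectively. Once that is fixed, the verifications are routine.
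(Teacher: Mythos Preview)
Your proposal is correct and follows essentially the same route as the paper: both reduce everything to the fact that $u^{-1}(a)$ and $v^{-1}(b)$ are intervals in the chain $\bFn$, and then do an interval/down-set/up-set calculation there. Your formulation of Part~(2) via the identities $u^{-1}(\down_{\bFA} a) = \down_{\bFn}\bigl(\max_\k u^{-1}(a)\bigr)$ and $v^{-1}(\up_{\bFB} b) = \up_{\bFn}\bigl(\min_\k v^{-1}(b)\bigr)$ is a slightly more structural repackaging of the paper's argument, which instead quantifies over $(a',b')\in\down_{\bFA} a\times\up_{\bFB} b$ and observes $\max_\k u^{-1}(a')\le_\k\max_\k u^{-1}(a)$; the underlying computation is the same.
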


Conditions (2) and (3) explain the notation: (du) and (ud) are abbreviations for \emph{down-up} and \emph{up-down}, respectively.
\begin{proof}
We have
$\begin{aligned}[t]
&(a, b) \in \big(\bsF_\A \times \bsF_\B\big)\comp K\\
 \iff{}& (\forall c \in \bF) \ (u(c), v(c)) \ne (a, b)\\
  \iff{}& u^{-1}(a) \cap v^{-1}(b) = \varnothing.
\end{aligned}$

As $u^{-1}(a)$ and $v^{-1}(b)$ are intervals in $\bFn$, we have
$u^{-1}(a) \cap v^{-1}(b) = \varnothing$ if and only if
\[
{\textstyle\max_\k}(u^{-1}(a)) <_\k {\textstyle\min_\k}(v^{-1}(b)) \ \text{ or } \ {\textstyle\min_\k}(u^{-1}(a)) >_\k {\textstyle\max_\k}(v^{-1}(b)).
\]
This proves (1). Since $u^{-1}(a')$ and $v^{-1}(b')$ are intervals in  $\bFn$ for all $a'\in \bsF_\A$ and all $b' \in \bsF_\B$, we have
\begin{align*}
&{\textstyle\max_\k}(u^{-1}(a)) <_\k {\textstyle\min_\k}(v^{-1}(b))\\
\iff{}& (\forall a' \in\down_{\bFA} a)(\forall b' \in \up_{\bFB}b) \ {\textstyle\max_\k}(u^{-1}(a')) <_\k {\textstyle\min_\k}(v^{-1}(b'))\\
\iff {}&(\down_{\bFA} a \times \up_{\bFB}b)\cap K = \varnothing.
\end{align*}
Hence (2) holds, and therefore (3) holds by symmetry.
\end{proof}

Given $(a, b) \in \big(\bsF_\A \times \bsF_\B\big)\comp K $,
precisely one of the conditions (du) and (ud) in
Lemma~\ref{lem:abNotInD}(1)(iii) holds. If $(a,b)\models
\text{(du)}$, then we define
\begin{multline*}
S_{ab} := \{(\top, \top), (\bot, \bot) \} \cup \big((\bsF_\A \times \bsF_\B)\comp (\down_{\bFA} a \times \up_{\bFB}b)\big)\\
\cup \big((\bsT_\A \times \bsT_\B)\comp (\down_{\bTA} \neg a \times \up_{\bTB}\neg b)\big),
    \end{multline*}
and if $(a,b)\models \text{(ud)}$, then we define
{\allowdisplaybreaks\begin{multline*}
S_{ab} := \{(\top, \top), (\bot, \bot)\} \cup \big((\bsF_\A \times \bsF_\B)\comp (\up_{\bFA} a \times \down_{\bFB}b)\big)\\
\cup \big((\bsT_\A \times \bsT_\B)\comp (\up_{\bTA} \neg a \times \down_{\bTB}\neg b)\big).
\end{multline*}}
Assume $(a,b)\models \text{(du)}$. As $\bFA$ and $\bFB$ are chains,
\[
F_{ab} := (\bsF_\A \times \bsF_\B)\comp (\down_{\bFA} a \times
\up_{\bFB}b)\ \text{ and } \  T_{ab} := (\bsT_\A \times
\bsT_\B)\comp (\down_{\bTA} \neg a \times \up_{\bTB}\neg b)
\]
form sublattices of $\bFA \times \bFB$ and $\bTA \times \bTB$, respectively.
The  knowledge and truth orders on the subset $S_{ab}
= \{(\top, \top), (\bot, \bot)\} \cup F_{ab} \cup T_{ab}$ of $A\times B$ are shown in Figure~\ref{fig:Sab}.
Note that in Figure~\ref{fig:Sab}, and in later figures, we abbreviate $(a,b)$ to $ab$ for read\-ability.
With this diagram in hand, the following lemma is almost immediate.

\begin{lem}\label{prop:Sab-subalg}
Let $n\in \omega$ and let $u\colon \JB_n\to \A$ and $v\colon \JB_n\to \B$ be surjective homomorphisms with $\A$ and $\B$ non-trivial. Then $S_{ab}$ is a subuniverse of $\A \times \B$, for all $(a, b) \in \big(\bsF_\A \times \bsF_\B\big)\comp K $.
\end{lem}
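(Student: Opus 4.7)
My plan is a direct verification that $S_{ab}$ is closed under all fundamental operations of $\A\times\B$ and contains the constants. By the manifestly symmetric form of the two cases, it suffices to treat $(a,b)\models\text{(du)}$; the (ud) case follows by swapping $\up$ and $\down$ throughout, with the same arguments. So I assume $S_{ab}=\{(\top,\top),(\bot,\bot)\}\cup F_{ab}\cup T_{ab}$ with $F_{ab}$ and $T_{ab}$ as defined in the text.

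First I would show that the constants of $\A\times\B$, i.e.\ the set $K=\{(u(c),v(c))\mid c\in J_n\}$, lie inside $S_{ab}$. The pairs $(\top,\top)$ and $(\bot,\bot)$ are explicitly present. For a false-constant pair $(\mbf_i,\mbf_i)=(u(\mbf_i),v(\mbf_i))$, Lemma~\ref{lem:abNotInD}(2) gives $(\down_{\bFA}a\times\up_{\bFB}b)\cap K=\varnothing$, so $(\mbf_i,\mbf_i)\in F_{ab}$. For a true-constant pair $(\mbt_i,\mbt_i)$, if it lay in $\down_{\bTA}\neg a\times\up_{\bTB}\neg b$, then applying $\neg$ (which preserves $\le_\k$) would place $(\mbf_i,\mbf_i)$ in $\down_{\bFA}a\times\up_{\bFB}b$, contradicting the previous step; so $(\mbt_i,\mbt_i)\in T_{ab}$. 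Hence $K\subseteq S_{ab}$.

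Next I would check $\neg$-closure. The elements $(\top,\top)$ and $(\bot,\bot)$ are $\neg$-fixed. For $(f,g)\in F_{ab}$ we have $f\nleqslant_\k a$ or $g\ngeqslant_\k b$; since $\neg$ preserves $\le_\k$ on each factor, this gives $\neg f\nleqslant_\k\neg a$ or $\neg g\ngeqslant_\k\neg b$, so $(\neg f,\neg g)\in T_{ab}$. The reverse inclusion $\neg(T_{ab})\subseteq F_{ab}$ is identical, yielding $\neg(S_{ab})=S_{ab}$.

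Finally, closure under $\otimes,\oplus,\wedge,\vee$ is handled by case analysis on the block to which each operand belongs. If both operands lie in $F_{ab}$ (respectively $T_{ab}$), closure is immediate from the sublattice property of $F_{ab}\subseteq\bFA\times\bFB$ (respectively $T_{ab}\subseteq\bTA\times\bTB$) noted just before the lemma, since all four operations act on false (or true) constants as max/min on indices. If one operand $(f,g)$ lies in $F_{ab}$ and the other $(t,t')$ in $T_{ab}$, then the identities $\mbf_i\otimes\mbt_j=\bot$ and $\mbf_i\oplus\mbt_j=\top$ of Definition~\ref{def:Jn} lift through $u$ and $v$, giving $(f,g)\otimes(t,t')=(\bot,\bot)$ and $(f,g)\oplus(t,t')=(\top,\top)$; while $\bsF_\A<_\t\bsT_\A$ and $\bsF_\B<_\t\bsT_\B$ give $(f,g)\wedge(t,t')=(f,g)$ and $(f,g)\vee(t,t')=(t,t')$. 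The remaining cases, in which one operand is $(\top,\top)$ or $(\bot,\bot)$, reduce to the fact that $\top$ absorbs under $\oplus$ and under $\wedge$ from above on $\bsF_\A\cup\bsT_\A$, and dually for $\bot$; each such product is either the other operand or one of $(\top,\top),(\bot,\bot)$.

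The main obstacle is really just bookkeeping in the third step, to make sure every interaction between the three pieces of $S_{ab}$ is accounted for; but each individual check reduces to one of a small list of identities about operations between $\top$, $\bot$, $\bF$ and $\bT$ in $\JB_n$ (and hence in their images under $u,v$), so none of the checks is substantive. The genuine content of the lemma is Steps 1 and 2, which is where the specific shape of the excluded rectangles $\down_{\bFA}a\times\up_{\bFB}b$ and $\down_{\bTA}\neg a\times\up_{\bTB}\neg b$ is used.
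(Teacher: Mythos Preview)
Your approach is correct and is essentially a spelled-out version of what the paper does: the paper simply points to the figure displaying the knowledge and truth orders on $S_{ab}$ and declares the lemma ``almost immediate'', leaving the reader to observe that $S_{ab}$ has the same overall shape as $\JB_k$ (with the sublattices $F_{ab}$, $T_{ab}$ in place of the chains $\bF$, $\bT$) and is therefore visibly closed under all operations and contains the constants. Your three steps make exactly this explicit.

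One small bookkeeping slip in your final paragraph: you assert that when one operand is $(\top,\top)$ or $(\bot,\bot)$, ``each such product is either the other operand or one of $(\top,\top),(\bot,\bot)$''. This fails for $(\top,\top)\wedge(\bot,\bot)$ and $(\top,\top)\vee(\bot,\bot)$: in the truth order of any non-trivial homomorphic image of $\JB_n$, the elements $\top$ and $\bot$ are incomparable, with meet the $\le_\t$-largest false constant and join the $\le_\t$-smallest true constant. Thus $(\top,\top)\wedge(\bot,\bot)=(u(\mbf_n),v(\mbf_n))$ and $(\top,\top)\vee(\bot,\bot)=(u(\mbt_n),v(\mbt_n))$. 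These are constants of $\A\times\B$ and hence lie in $S_{ab}$ by your Step~1, so the fix is immediate; but the sentence as written is not quite right.
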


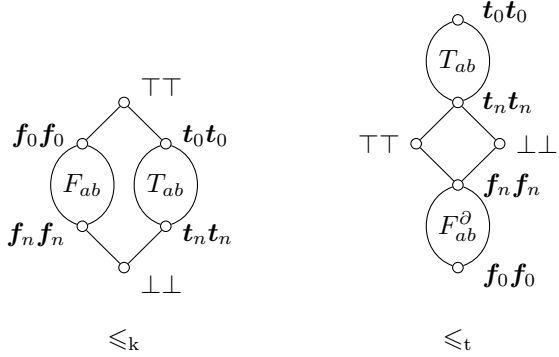
\begin{figure}[!ht]
\centering
\begin{tikzpicture}[scale=0.55]
\begin{scope}
  \node at (0,-1.75) {$\le_\k$};
  \node[unshaded] (bot) at (0,0) {};
  \node[unshaded] (fn) at (-1,1) {};
  \node[unshaded] (tn) at (1,1) {};
  \node[unshaded] (f0) at (-1,3) {};
  \node[unshaded] (t0) at (1,3) {};
  \node[unshaded] (top) at (0,4) {};
  \draw[order] (bot) -- (fn);
  \draw[curvy] (fn) to [bend left] (f0);
  \draw[curvy] (fn) to [bend right] (f0);
  \draw[order] (f0) -- (top);
  \draw[order] (bot) -- (tn);
  \draw[curvy] (tn) to [bend left] (t0);
  \draw[curvy] (tn) to [bend right] (t0);
  \draw[order] (t0) -- (top);
  \node[label,anchor=west,yshift=-6pt] at (bot) {$\bot\bot$};
  \node[label,anchor=east,yshift=-3pt] at (fn) {$\mbf_n\mbf_n$};
  \node[label] at ($0.5*(fn)+0.5*(f0)$) {$F_{ab}$};
  \node[label,anchor=east,yshift=3pt] at (f0) {$\mbf_0\mbf_0$};
  \node[label,anchor=west,yshift=-3pt] at (tn) {$\mbt_n\mbt_n$};
  \node[label] at ($0.5*(tn)+0.5*(t0)$) {$T_{ab}$};
  \node[label,anchor=west,yshift=3pt] at (t0) {$\mbt_0\mbt_0$};
  \node[label,anchor=west,yshift=6pt] at (top) {$\top\top$};
\end{scope}
\begin{scope}[xshift=8cm]
  \node at (0,-1.75) {$\le_\t$};
  \node[unshaded] (f0) at (0,0) {};
  \node[unshaded] (fn) at (0,2) {};
  \node[unshaded] (top) at (-1,3) {};
  \node[unshaded] (bot) at (1,3) {};
  \node[unshaded] (tn) at (0,4) {};
  \node[unshaded] (t0) at (0,6) {};
  \draw[curvy] (f0) to [bend left] (fn);
  \draw[curvy] (f0) to [bend right] (fn);
  \draw[order] (fn) -- (top) -- (tn);
  \draw[order] (fn) -- (bot) -- (tn);
  \draw[curvy] (tn) to [bend left] (t0);
  \draw[curvy] (tn) to [bend right] (t0);
  \node[label,anchor=west,xshift=3pt,yshift=-3pt] at (f0) {$\mbf_0\mbf_0$};
  \node[label] at ($0.5*(fn)+0.5*(f0)$) {$F_{ab}^\partial$};
  \node[label,anchor=west,xshift=3pt] at (fn) {$\mbf_n\mbf_n$};
  \node[label,anchor=east] at (top) {$\top\top$};
  \node[label,anchor=west] at (bot) {$\bot\bot$};
  \node[label,anchor=west,xshift=3pt] at (tn) {$\mbt_n\mbt_n$};
  \node[label] at ($0.5*(tn)+0.5*(t0)$) {$T_{ab}$};
  \node[label,anchor=west,xshift=3pt,yshift=3pt] at (t0) {$\mbt_0\mbt_0$};
\end{scope}
\end{tikzpicture}
\caption{The subuniverse $S_{ab}$ of $\A \times \B$.}\label{fig:Sab}
\end{figure}

Let $\mathcal F$ be a topped intersection structure on a non-empty set~$X$, that is, $\mathcal F$ contains $X$ and is closed under intersections of non-empty families, and let $x\in X$. An element $Y$ of $\mathcal F$ is a \defn{value at $x$} if $Y$ is maximal in $\mathcal F$ with respect to not containing $x$. The following lemma will help us to identify the meet-irreducible elements of the lattice $\Sub(\A\times \B)$. The proof is very easy---see \cite[Lemma 8.5.1]{CD98} for the proof in the case that $\mathcal F$ is the lattice of subuniverses of some algebra.

\begin{lem} \label{lem:CMI=Value}
Let $\mathcal F$ be a topped intersection structure on a non-empty set~$X$. An element $Y$ of $\mathcal F$ is completely meet-irreducible in the lattice $\mathcal F$ if and only if $Y$ is a value at $x$ for some $x\in X$.
\end{lem}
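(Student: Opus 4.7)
The plan is to prove both directions by direct manipulation of the defining property of complete meet-irreducibility.

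For the forward direction, suppose $Y \in \mathcal F$ is completely meet-irreducible. The plan is to show there is a unique upper cover of $Y$ in $\mathcal F$, and use an element witnessing that cover to produce the required $x$. Specifically, let $\mathcal G = \{\, Z \in \mathcal F \mid Z \supsetneq Y\,\}$. If $\mathcal G = \varnothing$, then $Y = X$, and we may choose any $x \in X$ (or argue separately if we adopt the convention that $X$ itself is meet-irreducible only vacuously). Otherwise, the complete meet-irreducibility of $Y$ forces $\bigcap \mathcal G \ne Y$, since $Y \notin \mathcal G$; combined with $Y \subseteq \bigcap \mathcal G$ and the fact that $\bigcap \mathcal G \in \mathcal F$, we conclude $\bigcap \mathcal G \in \mathcal G$, so $Y$ has a least proper extension $Y^\ast$ in $\mathcal F$. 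Pick any $x \in Y^\ast \comp Y$; then every $Z \in \mathcal F$ with $Z \supsetneq Y$ contains $Y^\ast$ and hence contains $x$, while $Y$ itself does not contain $x$. So $Y$ is a value at $x$.

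For the reverse direction, suppose $Y$ is a value at some $x \in X$. Let $\mathcal G \subseteq \mathcal F$ with $Y = \bigcap \mathcal G$; I need to show $Y \in \mathcal G$. If $Y \notin \mathcal G$, then for every $Z \in \mathcal G$ we have $Y \subseteq Z$ and $Y \ne Z$, so $Y \subsetneq Z$; by maximality of $Y$ among elements of $\mathcal F$ not containing $x$, each such $Z$ must contain $x$. Hence $x \in \bigcap \mathcal G = Y$, contradicting the assumption that $x \notin Y$. Therefore $Y \in \mathcal G$, which is the defining property of complete meet-irreducibility.

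The proof is essentially routine order-theoretic bookkeeping and contains no real obstacle; the only mild subtlety is the argument in the forward direction that the set $\mathcal G$ of proper extensions of $Y$ has a least element, which uses exactly the toppedness of $\mathcal F$ together with the complete meet-irreducibility hypothesis. Since the excerpt cites \cite[Lemma 8.5.1]{CD98} for the analogous fact about subuniverse lattices, one could also simply note that the proof given there goes through verbatim once $\mathcal F$ is replaced by an arbitrary topped intersection structure.
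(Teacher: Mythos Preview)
Your proof is essentially correct and follows the standard approach. The paper itself does not give a proof; it simply declares the result ``very easy'' and refers to \cite[Lemma 8.5.1]{CD98} for the special case of a subuniverse lattice.

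One point needs tightening: in the forward direction, your handling of the case $\mathcal G = \varnothing$ (i.e., $Y = X$) is incorrect as written. You cannot ``choose any $x \in X$'', because a value at $x$ must \emph{omit} $x$, and $Y = X$ omits nothing. The resolution is simply that this case does not arise: under the standard convention, the top element $X$ is never completely meet-irreducible, since $X$ equals the meet of the empty family and $X \notin \varnothing$. Your parenthetical gestures at this but does not state it cleanly; you should just note that complete meet-irreducibility of $Y$ forces $Y \ne X$, hence $\mathcal G \ne \varnothing$, and proceed directly to your main argument. With that correction, both directions are fine.
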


Given a topped intersection structure $\mathcal F$ on $X$ and $x\in X$, let $\Val(x)$ denote the set of values of $\mathcal F$ at~$x$.
Note that, by Lemma~\ref{lem:CMI=Value}, the union over all $(a, b)\in A\times B$ of the sets $\Val(a,b)$ is the set of all meet-irreducible
elements of the lattice $\Sub(\A\times \B)$.

Note that Case (d) in the following theorem arises only when neither $\A$ nor $\B$ is isomorphic to~$\JB_{0,n}$.

\begin{thm}\label{thm:MI}
Let $n\in \omega$ and let $u\colon \JB_n\to \A$ and $v\colon \JB_n\to \B$ be surjective homomorphisms with $\A$ and $\B$ non-trivial.
The meet-irreducible elements in the lattice $\Sub(\A\times \B)$ are the sets $S_\le$ and $S_\ge$, and $S_{ab}$, for all pairs
$(a, b) \in \big(\bsF_\A \times \bsF_\B\big)\comp K$.
Indeed,
\begin{enumerate}[ \normalfont(a)]

\item $\Val(a, b) = \{S_\le, S_\ge\}$, for all $(a, b) \in \big(A\times B\big)\comp \big(S_\le \cup S_\ge\big)$,

\item $\Val(a, b) = \{S_\le\}$, for all $(a, b) \in S_\ge\comp S_\le$,

\item $\Val(a, b) = \{S_\ge\}$, for all $(a, b) \in S_\le\comp S_\ge$,

\item $\Val(a,b) = \{S_{ab}\}$, for all $(a, b) \in \big(S_\le \cap S_\ge\big)\comp K$,

\item $\Val(a, b) = \varnothing$, for all $(a, b)\in K$.
\end{enumerate}\end{thm}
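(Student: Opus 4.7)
The approach is to invoke Lemma~\ref{lem:CMI=Value}, which identifies the meet-irreducibles of $\Sub(\A\times\B)$ with the union of the value-sets $\Val(a,b)$ over all $(a,b)\in A\times B$. Thus it suffices to compute $\Val(a,b)$ for every $(a,b)$, and the five cases (a)--(e) form the natural case-split according to which of $S_\le$, $S_\ge$ and $K$ the pair $(a,b)$ meets. The statement of the theorem then follows by collecting these values; note that $\{(\top,\top),(\bot,\bot)\}\subseteq K$, so $(S_\le\cap S_\ge)\comp K=((\bsF_\A\times\bsF_\B)\cup(\bsT_\A\times\bsT_\B))\comp K$.

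Case~(e) is immediate, since every subuniverse of $\A\times\B$ contains~$K$, so no subuniverse can omit a point of~$K$. Cases (a), (b) and (c) follow at once from Lemma~\ref{lem:maximal}: any value at $(a,b)$ is a proper subuniverse and hence is contained in one of the two maximal proper subuniverses $S_\le$ or $S_\ge$. Since neither $S_\le$ nor $S_\ge$ is contained in the other, whichever of them misses $(a,b)$ is itself a value at $(a,b)$, and the three sub-cases are read off from whether $(a,b)$ itself lies in $S_\le$ and/or~$S_\ge$.

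All the real work is in Case~(d). Fix $(a,b)\in(\bsF_\A\times\bsF_\B)\comp K$; the sub-case $(a,b)\in(\bsT_\A\times\bsT_\B)\comp K$ reduces via $\neg$. By Lemma~\ref{lem:abNotInD}(1) exactly one of (du) and (ud) holds; I shall treat (du), the other being symmetric in the two coordinates. Since $S_{ab}$ is a subuniverse (Lemma~\ref{prop:Sab-subalg}) and obviously omits $(a,b)$, the task is to show that every subuniverse $S$ with $(a,b)\notin S$ satisfies $S\subseteq S_{ab}$. I would proceed in two steps. First, show $S\subseteq S_\le\cap S_\ge$: if $S$ meets $(S_\le\cup S_\ge)\comp (S_\le\cap S_\ge)$, then Lemma~\ref{lem:rectangle1} forces $\bsF_\A\times\bsF_\B\subseteq S$ (while any element of $S$ in $\bsF_\A\times\bsT_\B$ or $\bsT_\A\times\bsF_\B$ is ruled out by Lemma~\ref{lem:AtimesB}); since $(a,b)\in\bsF_\A\times\bsF_\B$, this is a contradiction. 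Second, suppose $(a',b')\in S$ with $a'\in\down_{\bFA}a$ and $b'\in\up_{\bFB}b$; pick $c\in u^{-1}(a)$ and $d\in v^{-1}(b)$, whose existence follows from surjectivity and whose fibres are disjoint by (du). The (du) hypothesis then gives $u(d)>_\k a$ and $v(c)<_\k b$, and since $\le_\t$ on $\bFn$ is dual to $\le_\k$, the two-move computation
\[
(a',b')\vee(u(d),v(d))=(a',b)\quad\text{and then}\quad (a',b)\oplus(u(c),v(c))=(a,b)
\]
places $(a,b)$ in $S$, a contradiction. This shows $S$ avoids $\down_{\bFA}a\times\up_{\bFB}b$; the parallel computation on the T-block (or, equivalently, an application of $\neg$) shows $S$ avoids $\down_{\bTA}\neg a\times\up_{\bTB}\neg b$. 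Hence $S\subseteq S_{ab}$, and $\Val(a,b)=\{S_{ab}\}$.

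The main obstacle is that short two-move computation in Step~2: it is the only place in the proof that really uses the fibre structure of $u$ and $v$, the (du) hypothesis, and the $\le_\k$/$\le_\t$ duality on $\bFn$ at the same time. Everything else is bookkeeping on top of Lemmas~\ref{lem:rectangle1}--\ref{prop:Sab-subalg} and Lemma~\ref{lem:CMI=Value}.
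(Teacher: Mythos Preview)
Your approach is the same as the paper's: compute $\Val(a,b)$ case by case and appeal to Lemma~\ref{lem:CMI=Value}. Case~(d) is handled correctly; your two-move computation $(a',b')\vee(u(d),v(d))=(a',b)$ followed by $(a',b)\oplus(u(c),v(c))=(a,b)$ is valid (the paper instead uses $\big((c,d)\otimes(u(b'),v(b'))\big)\oplus(u(a'),v(a'))$, with $\otimes$ in place of your~$\vee$, but both work for the same reason). Step~1 of your Case~(d) is also fine.

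There is, however, a genuine gap in your treatment of (b) and (c). You correctly argue that any value lies inside $S_\le$ or $S_\ge$, and that whichever one omits $(a,b)$ is itself a value; but you have not shown it is the \emph{only} value. Concretely, in case~(b) with $(a,b)\in S_\ge\comp S_\le$, why can there not be a value $S\subsetneq S_\ge$ that also omits $(a,b)$ and is incomparable with $S_\le$? The missing observation is that, by Lemma~\ref{lem:rectangle1}(b), any subuniverse that contains even one point of $S_\ge\comp S_\le=\big((\{\top\}\times B)\cup(A\times\{\bot\})\big)\comp\{(\top,\top),(\bot,\bot)\}$ must contain all of it; hence a subuniverse omitting $(a,b)$ omits the entire arm and is therefore contained in $S_\le\cap S_\ge\subseteq S_\le$, forcing $S=S_\le$ by maximality. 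The paper makes exactly this argument explicit; once you add it, your proof is complete.
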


\begin{proof}
(a) By Lemma~\ref{lem:maximal}, $S_\le$ and $S_\ge$ are the
only maximal subuniverses of $\A\times \B$. It is therefore
trivial that, for all $(a, b) \in
\big(A\times B\big)\comp \big(S_\le \cup S_\ge\big)$, we have  $\Val(a, b) = \{S_\le, S_\ge\}$.

(b) Let $(a, b) \in S_\ge\comp S_\le =
\big(\{\top\}\times B\big) \cup \big(A\times \{\bot\}\big)\comp \{(\top, \top), (\bot, \bot)\}$ and assume that $S$ is a value at $(a, b)$. Since $S$ is a proper subuniverse, by Lemma~\ref{lem:maximal} we have either (i) $S\subseteq S_\le$ or (ii) $S\subseteq S_\ge$. Assume that (ii) holds. By Lemma~\ref{lem:rectangle1}, $S$~is disjoint from $\big(\{\top\}\times B\big) \cup \big(A\times \{\bot\}\big)\comp\{(\top, \top), (\bot, \bot)\}$ and so
\[
S\subseteq \{(\top, \top), (\bot, \bot)\} \cup \big(\bsF_\A \times \bsF_\B\big) \cup \big(\bsT_\A \times \bsT_\B\big) \subseteq S_\le.
\]
Hence, in both cases (i) and (ii) we have $S\subseteq S_\le$. As $(a, b)\notin S_\le$, the maximality of $S$ yields $S = S_\le$.

(c) If $S$ is a value at $(a, b) \in S_\le\comp S_\ge$, then similarly we derive $S = S_\ge$.

(d) Let $(a, b) \in \big(S_\le \cap S_\ge\big)\comp K = \big(\big(\bsF_\A \times \bsF_\B\big)\cup \big(\bsT_\A \times \bsT_\B\big)\big)\comp K
$. By symmetry, we may assume that ${(a, b) \in \big(\bsF_\A \times \bsF_\B\big)\comp K}$. By
Lemma~\ref{lem:abNotInD} we may assume without loss of
generality that $(a,b)\models \text{(du)}$, in which case
\[
S_{ab} = \{(\top, \top), (\bot, \bot)\} \cup F_{ab} \cup T_{ab}.
\]
Assume $S$ is a value at $(a, b)$. By Lemma~\ref{lem:rectangle1}, we know that $S$ is
disjoint from
\[
\big((A \times \{\bot, \top\})\cup (\{\bot, \top\}\times B)\big)\setminus \{(\top, \top), (\bot, \bot)\}.
\]
As $S$ is a proper subuniverse of $\A\times \B$, by Lemma~\ref{lem:AtimesB} it is also disjoint from $(\bsF_\A\times \bsT_\B)\cup (\bsT_\A\times \bsF_\B)$. Hence
\[
S\subseteq \{(\top, \top), (\bot, \bot)\} \cup \big((\bsF_\A \times \bsF_\B)\big)
\cup \big((\bsT_\A \times \bsT_\B)\big). 
\]
We prove that $(\down_{\bFA} a \times \up_{\bFB}b)\cap S = \varnothing$. Suppose
$(c,d)\in (\down_{\bFA} a \times \up_{\bFB}b) \cap S$, whence $c\le_\k a$ and $d\ge_\k b$.
Choose $a'\in u^{-1}(a)$, $b'\in v^{-1}(b)$. As $(a,b)\models \text{(du)}$, by Lemma~\ref{lem:abNotInD}(1) we have $a' <_\k b'$, whence $u(b') \ge_\k u(a') = a \ge_\k c$
and $v(a') \le_\k v(b') = b \le_\k d$. Thus,
\begin{align*}
(a, b) &= (c \oplus a, b \oplus v(a'))\\
&= \big((c \otimes u(b'))\oplus u(a'), (d\otimes v(b'))\oplus v(a')\big)\\
&= \big((c, d) \otimes (u(b'), v(b'))\big)\oplus(u(a'), v(a')).
\end{align*}
It follows that $(a, b) \in S$ since $(c, d)\in S$ by assumption, and since we have
$(u(b'), v(b')), (u(a'), v(a'))\in K$, and $K\subseteq S$. This contradiction shows that $(\down_{\bFA} a \times \up_{\bFB}b)\cap S = \varnothing$.
By applying $\neg$, we get $(\down_{\bTA} \neg a \times \up_{\bTB}\neg b)\cap S = \varnothing$. We conclude that
\begin{align*}
S\subseteq  &\{(\top, \top), (\bot, \bot)\} \cup F_{ab} \cup T_{ab} = S_{ab}.
\end{align*}
Since $(a, b)\notin S_{ab}$ and $S$ is maximal with respect to not containing $(a, b)$, we have $S = S_{ab}$.

(e) It is trivial that $\Val(a,b) =\varnothing$, for all $(a, b)\in K$, as $K$ is the set of constants of $\A\times \B$.
\end{proof}

\section{The proof that $\protect\JT_n$ yields a duality on $\protect\CJ_n$}\label{sec:singleproof}

Recall from the NU Duality Theorem~\ref{NUDT} that $\JT^2_n = \langle J_n; \Sub(\JB_n^2), \T\rangle$ yields a duality on the quasivariety
$\CJ_n =\ISP(\JB_n)$, where $\Sub(\JB_n^2)$
is the set of all compatible binary relations on $\JB_n$. Our aim is to remove relations from the set $\Sub(\JB_n^2)$ without destroying the duality until we arrive at the set $\mathcal R_{(n)}$ described in Theorem~\ref{cor:bigduality}.

The concept of \emph{entailment}~\cite[Section 2.4]{CD98} is crucial to understanding how and why it is possible to reduce the number of compatible relations required to yield a duality.

\begin{df}\label{def:entails}
Let $\M$ be a finite algebra, let $\mathcal{R}\cup \{S\}$ be a set of compatible finitary relations on $\M$ and let
$\CA =\ISP(\M)$. We say that $\mathcal{R}$ \defn{entails} $S$
and write $\mathcal{R} \vdash S$
if, for every $\A \in \CA$, every continuous map $u \colon  \mathrm{D}(\A) \to M$ that preserves the relations in $\mathcal{R}$  also preserves $S$.

There is an obvious extension of the concept of entailment to the multi-sorted setting that we will use just once in Section~\ref{sec:optmulti}.
\end{df}

The significance of entailment is that if $\MT = \langle M; \mathcal R, \T\rangle$ yields a duality on $\CA$ and $\mathcal R\setminus \{S\}\vdash S$, then
$\MT' = \langle M; \mathcal R\setminus \{S\}, \T\rangle$ also yields a duality on~$\CA$. There are several admissible constructs that yield entailment;
for example, every compatible binary relation,
$R$, on $\M$ entails its converse, $R\conv$, and every pair
$R, S$ of compatible binary relations on $\M$ entail their intersection:  
\[
R\vdash R\conv \qquad \text{and}\qquad  \{R, S\} \vdash R \cap S.
\]
Thus, we can certainly remove all meet-reducible members of the lattice $\Sub(\JB_n^2)$ without destroying the duality. Hence our first task is to describe the meet-irreducible members of $\Sub(\JB_n^2)$.

With the help of the Universal Algebra Calculator
(UAC)~\cite{UACalc}, we have drawn the lattice $\Sub(\JB_2^2)$---see
Figure~\ref{fig:subalg-J2sq}.
Computer calculations yield 200 compatible
binary relations on $\JB_3$ (107 up to converses). While the size of
$\Sub(\JB_n^2)$ grows very quickly with~$n$, we will see that the
set of meet-irreducible elements of $\Sub(\JB_n^2)$ is much more
manageable and has size~$O(n^2)$.

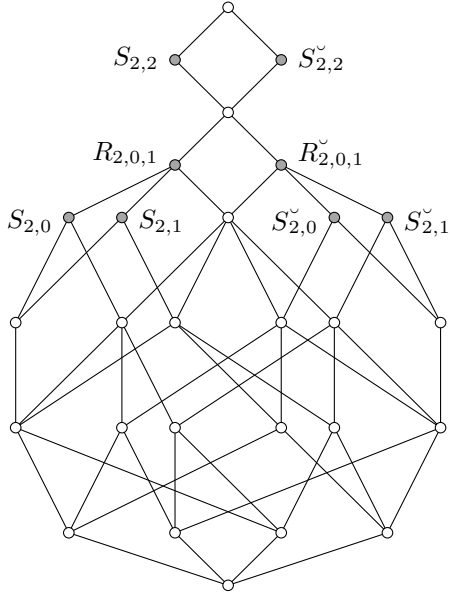
\begin{figure}[h!]
\centering
\begin{tikzpicture}[scale=0.7]
\draw[order] (0,4) -- (3,5) -- (4,7) -- (4,9) -- (3,11) -- (1,12) -- (0,13) -- (1,14) -- (0,15);
\draw[order] (-1,7) -- (2,9);
\draw[order] (-1,9) -- (1,7) -- (1,9);
\draw[order] (1,5) -- (-1,7) -- (-2,9);
\draw[order] (-2,9) -- (-2,7) -- (1,9);
\draw[order] (-1,9) -- (2,7) -- (2,9);
\draw[order] (-1,5) -- (-1,7);
\draw[order] (3,5) -- (1,7);
\draw[order] (-3,5) -- (1,7);
\draw[order] (-3,5) -- (-2,7) -- (-1,5) -- (4,7);
\draw[order] (3,5) -- (2,7) -- (1,5) -- (-4,7);
\draw[order] (-3,11) -- (-2,9) -- (0,11) -- (2,9) -- (3,11);
\draw[order] (-4,9) -- (-2,11) -- (-1,9) -- (0,11) -- (1,9) -- (2,11);
\draw[order] (0,4) -- (1,5);
\draw[order] (0,4) -- (-1,5);
\draw[order] (0,4) -- (-3,5) -- (-4,7) -- (-4,9) -- (-3,11) -- (-1,12) -- (0,13) -- (-1,14) -- (0,15);
\draw[order] (-2,11) -- (-1,12) -- (0,11) -- (1,12) -- (2,11) -- (4,9);
\draw[order] (-2,9) -- (-4,7) -- (-1,9);
\draw[order] (1,9) -- (4,7) -- (2,9);
\node[unshaded] at (0,15) {};
\node[shaded] at (-1,14) {};
\node[shaded] at (1,14) {};
\node[unshaded] at (0,13) {};
\node[shaded] at (-1,12) {};
\node[shaded] at (1,12) {};
\node[shaded] at (-3,11) {};
\node[shaded] at (-2,11) {};
\node[unshaded] at (0,11) {};
\node[shaded] at (2,11) {};
\node[shaded] at (3,11) {};
\node[unshaded] at (-4,9) {};
\node[unshaded] at (-2,9) {};
\node[unshaded] at (-1,9) {};
\node[unshaded] at (1,9) {};
\node[unshaded] at (2,9) {};
\node[unshaded] at (4,9) {};
\node[unshaded] at (-4,7) {};
\node[unshaded] at (-2,7) {};
\node[unshaded] at (-1,7) {};
\node[unshaded] at (1,7) {};
\node[unshaded] at (2,7) {};
\node[unshaded] at (4,7) {};
\node[unshaded] at (-3,5) {};
\node[unshaded] at (-1,5) {};
\node[unshaded] at (1,5) {};
\node[unshaded] at (3,5) {};
\node[unshaded] at (0,4) {};
\node[label,anchor=east] at (-1,14) {$S_{2,2}$};
\node[label,anchor=west] at (1,14) {$S\convsub{2,2}$};
\node[label,anchor=east,yshift=4pt] at (-1,12) {$R_{2,0,1}$};
\node[label,anchor=west,yshift=4pt] at (1,12) {$R\convsub{2,0,1}$};
\node[label,anchor=east,yshift=-2pt] at (-3,11) {$S_{2,0}$};
\node[label,anchor=west,yshift=-2pt] at (-2,11) {$S_{2,1}$};
\node[label,anchor=east,yshift=-2pt] at (2,11)  {$S\convsub{2,0}$};
\node[label,anchor=west,yshift=-2pt] at (3,11) {$S\convsub{2,1}$};
\end{tikzpicture}
\caption{The lattice $\Sub(\JB_2^2)$ with its meet-irreducible elements shaded and labelled.}\label{fig:subalg-J2sq}
\end{figure}

\begin{thm}\label{thm:MIJ_n^2}
Let $n\in \omega$. The meet-irreducibles of the lattice $\Sub(\JB_n^2)$ are
\[
S_{n,i}, \text{ for }0\le i\le n\quad \text{ and }\quad R_{n, i, j}, \text{ for }
0\le i< j\le n-1 \text{ when } n\ge 2,
\]
and their converses.
\end{thm}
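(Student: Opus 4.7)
The plan is to specialise Theorem~\ref{thm:MI} to the case $\A = \B = \JB_n$, where $u$ and $v$ can both be taken to be the identity on $\JB_n$, so that $K = \{(c, c) \mid c \in J_n\}$ is the diagonal. In this setting Theorem~\ref{thm:MI} asserts that the meet-irreducibles of $\Sub(\JB_n^2)$ are exactly $S_\le$, $S_\ge$, and the relations $S_{ab}$ for $(a, b) \in (\bF \times \bF) \setminus K$, so the remaining work is to identify these abstract relations concretely with the relations listed in the theorem.

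For $S_\le$ and $S_\ge$, I would first observe that comparing definitions directly gives $S_\le = (J_n \times \{\top\}) \cup (\{\bot\} \times J_n) \cup \bF^2 \cup \bT^2 = S_{n, n}$, and hence $S_\ge = S_{n, n}^\conv$. The substantive step is then the computation of $S_{ab}$ for $(a, b) = (\mbf_i, \mbf_j)$ with $i \ne j$. Since $u^{-1}(\mbf_i) = \{\mbf_i\}$ and $v^{-1}(\mbf_j) = \{\mbf_j\}$, Lemma~\ref{lem:abNotInD}(1) implies that condition (du) holds precisely when $\mbf_i <_\k \mbf_j$, i.e.\ when $i > j$, and (ud) holds precisely when $i < j$. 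Taking the (ud) case $i < j$, the facts that $\up_{\bFn} \mbf_i = \{\mbf_0, \dots, \mbf_i\}$ and $\down_{\bFn} \mbf_j = \{\mbf_j, \dots, \mbf_n\}$ in the knowledge order (and similarly in $\bTn$), substituted into the definition of $S_{ab}$, yield
\[
S_{\mbf_i, \mbf_j} = S_{n, i} \ \text{ when } j = i + 1, \qquad S_{\mbf_i, \mbf_j} = R_{n, i, j - 1} \ \text{ when } j > i + 1,
\]
by direct comparison with the formulas in Subsections~\ref{subsec:Sni} and~\ref{subsec:Rnij}. For $i > j$, the relation $S_{\mbf_i, \mbf_j}$ is the converse of $S_{\mbf_j, \mbf_i}$, producing the converses of the above. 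Letting $(i, j)$ range over all distinct ordered pairs from $\{0, \dots, n\}$, this recovers $\{S_{n, 0}, \dots, S_{n, n-1}\} \cup \{R_{n, i, j} \mid 0 \le i < j \le n - 1\}$ together with their converses, which combined with $S_{n, n}$ and $S_{n, n}^\conv$ matches the statement of the theorem.

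The argument is essentially bookkeeping, with Theorem~\ref{thm:MI} supplying the substantive content; the hardest part is making sure the (du)/(ud) dichotomy is converted correctly into the asymmetric combinatorial descriptions of $S_{n,i}$ and $R_{n,i,j}$. The only other point I would watch carefully is whether pairs $(a, b) \in (\bT \times \bT) \setminus K$ (which lie in the domain of Theorem~\ref{thm:MI}(d) but are absorbed by the symmetry remark in that proof) could contribute additional relations. They cannot, because $\neg$ is an automorphism of $\JB_n$ and the definition of $S_{ab}$ is symmetric under $\neg$ in the roles of $\bF$ and $\bT$, so $S_{\mbt_i, \mbt_j} = S_{\mbf_i, \mbf_j}$ and no new relation arises.
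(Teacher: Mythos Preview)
Your proposal is correct and follows essentially the same approach as the paper: specialise Theorem~\ref{thm:MI} to $\A=\B=\JB_n$, identify $S_\le$ and $S_\ge$ with $S_{n,n}$ and $S\convsub{n,n}$, and compute each $S_{\mbf_i\mbf_j}$ explicitly via the (du)/(ud) dichotomy to obtain the relations $S_{n,i}$ and $R_{n,i,j}$ and their converses. One small wording point: $\neg$ is not an automorphism of the full algebra $\JB_n$ (it does not fix the constants $\mbf_i$, $\mbt_i$), but your intended argument is sound because the symmetry you need is already built into the definition of $S_{ab}$, and in any case Theorem~\ref{thm:MI} only ranges over $(a,b)\in(\bF\times\bF)\comp K$, so the $\bT$-pairs need no separate treatment.
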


\begin{proof}
We shall apply Theorem~\ref{thm:MI} in the case that $\A = \B = \JB_n$. In this case,
 $S_\le = S_{n,n}$ and $S_\ge = S\convsub{n,n}$. Since $(\mbf_i, \mbf_j) \in \big(\bsF_\A \times \bsF_\B\big)\comp K$ if and only if $i \ne j$, the remaining meet-irreducibles in $\Sub(\JB_n^2)$ are of
the form $S_{\mbf_i \mbf_j}$, for some $i, j \in \{0, \dots, n\}$ with~$i\ne j$. If $i < j$, then $(\mbf_i, \mbf_j)\models \text{(ud)}$
and hence the relation $S_{\mbf_i \mbf_j}$ can be expressed as
\begin{align*}
&\{(\top, \top), (\bot, \bot)\} \cup \big((\bsF_n^2)\comp (\up_{\bFn} \mbf_i \times \down_{\bFn}\mbf_j)\big) \cup
\big((\bsT_n^2)\comp (\up_{\bTn} \neg \mbf_i \times \down_{\bTn}\neg \mbf_j)\big)\\
={}&\{(\top, \top), (\bot, \bot)\} \cup \big((\bsF_n^2)\comp (\up_{\bFn} \mbf_i \times \down_{\bFn}\mbf_j)\big) \cup
\big((\bsT_n^2)\comp (\up_{\bTn}  \mbt_i \times \down_{\bTn} \mbt_j)\big),
\end{align*}
which is the relation $R_{n, i, j-1}$.
In particular, we have $S_{\mbf_i \mbf_{i+1}}= R_{n,i,i} = S_{n, i}$.
If $i > j$, then $(\mbf_i, \mbf_j)\models \text{(du)}$
and hence the relation $S_{\mbf_i \mbf_j}$ is
\begin{align*}
&\{(\top, \top), (\bot, \bot)\} \cup \big((\bsF_n^2)\comp (\down_{\bFn} \mbf_i \times \up_{\bFn}\mbf_j)\big) \cup
\big((\bsT_n^2)\comp (\down_{\bTn} \neg \mbf_i \times \up_{\bTn}\neg \mbf_j)\big)\\
={}&\{(\top, \top), (\bot, \bot)\} \cup \big((\bsF_n^2)\comp (\down_{\bFn} \mbf_i \times \up_{\bFn}\mbf_j)\big) \cup
\big((\bsT_n^2)\comp (\down_{\bTn}  \mbt_i \times \up_{\bTn} \mbt_j)\big),
\end{align*}
which is the relation $R\convsub{n, j, i-1}$. In particular, we have $S_{\mbf_{i+1} \mbf_i}= R\convsub{n,i,i} = S\convsub{n, i}$.
Note that relations of the form $R_{n, i, j}$ that are not of the form $S_{n,i}$ occur only when $n\ge 2$; hence the restriction $n
\ge 2$ in the statement of the theorem.
\end{proof}

For $i,j\in \{0, \dots, n\}$ with $i\le j$, define
\[
\bF^{i,j} = \bF^2\setminus (\{\mbf_0, \dots, \mbf_i\}\times \{\mbf_{j+1},\dots, \mbf_n\}\big).
\]
Note that $\bF^{i,j}$ forms a sublattice of $\bFn^2$ and is obtained from $\bFn^2$ by removing a product of an up-set and a down-set from the
right-hand corner---see Figure~\ref{fig:ker(u)} for a drawing of~$\mathbf F_{\!3}^{0,1}$. The sublattice $\bTn^{i,j}$ of $\bTn^2$ is defined analogously.
The relation $R_{n, i, j}$ as a subuniverse of $\JB_n^2$ is then as shown in Figure~\ref{fig:Rnij}.

\begin{figure}[h!t]
\centering
\begin{tikzpicture}[scale=0.55]
\begin{scope}
  \node at (0,-1.75) {$\le_\k$};
  \node[unshaded] (bot) at (0,0) {};
  \node[unshaded] (fn) at (-1.25,1) {};
  \node[unshaded] (tn) at (1.25,1) {};
  \node[unshaded] (f0) at (-1.25,3) {};
  \node[unshaded] (t0) at (1.25,3) {};
  \node[unshaded] (top) at (0,4) {};
  \draw[order] (bot) -- (fn);
  \draw[fatcurvy] (fn) to [bend left] (f0);
  \draw[fatcurvy] (fn) to [bend right] (f0);
  \draw[order] (f0) -- (top);
  \draw[order] (bot) -- (tn);
  \draw[fatcurvy] (tn) to [bend left] (t0);
  \draw[fatcurvy] (tn) to [bend right] (t0);
  \draw[order] (t0) -- (top);
  \node[label,anchor=west,yshift=-6pt] at (bot) {$\bot\bot$};
  \node[label,anchor=east,yshift=-3pt] at (fn) {$\mbf_n\mbf_n$};
  \node[label] at ($0.5*(fn)+0.5*(f0)$) {$F_n^{ij}$};
  \node[label,anchor=east,yshift=5pt] at (f0) {$\mbf_0\mbf_0$};
  \node[label,anchor=west,yshift=-5pt] at (tn) {$\mbt_n\mbt_n$};
  \node[label] at ($0.5*(tn)+0.5*(t0)$) {$T_n^{ij}$};
  \node[label,anchor=west,yshift=5pt] at (t0) {$\mbt_0\mbt_0$};
  \node[label,anchor=west,yshift=6pt] at (top) {$\top\top$};
\end{scope}
\begin{scope}[xshift=8cm]
  \node at (0,-1.75) {$\le_\t$};
  \node[unshaded] (f0) at (0,0) {};
  \node[unshaded] (fn) at (0,2) {};
  \node[unshaded] (top) at (-1,3) {};
  \node[unshaded] (bot) at (1,3) {};
  \node[unshaded] (tn) at (0,4) {};
  \node[unshaded] (t0) at (0,6) {};
  \draw[fatcurvy] (f0) to [bend left] (fn);
  \draw[fatcurvy] (f0) to [bend right] (fn);
  \draw[order] (fn) -- (top) -- (tn);
  \draw[order] (fn) -- (bot) -- (tn);
  \draw[fatcurvy] (tn) to [bend left] (t0);
  \draw[fatcurvy] (tn) to [bend right] (t0);
  \node[label,anchor=west,xshift=3pt,yshift=-3pt] at (f0) {$\mbf_0\mbf_0$};
  \node[label] at ($0.5*(fn)+0.5*(f0)$) {$(F_n^{ij})^\partial$};
  \node[label,anchor=west,xshift=3pt,yshift=3pt] at (fn) {$\mbf_n\mbf_n$};
  \node[label,anchor=east] at (top) {$\top\top$};
  \node[label,anchor=west] at (bot) {$\bot\bot$};
  \node[label,anchor=west,xshift=3pt,yshift=-3pt] at (tn) {$\mbt_n\mbt_n$};
  \node[label] at ($0.5*(tn)+0.5*(t0)$) {$T_n^{ij}$};
  \node[label,anchor=west,xshift=3pt,yshift=3pt] at (t0) {$\mbt_0\mbt_0$};
\end{scope}
\end{tikzpicture}
\caption{The subuniverse $R_{n,i,j}= S_{n,i}\cup S_{n,j}$ of $\JB_n^2$.}\label{fig:Rnij}
\end{figure}
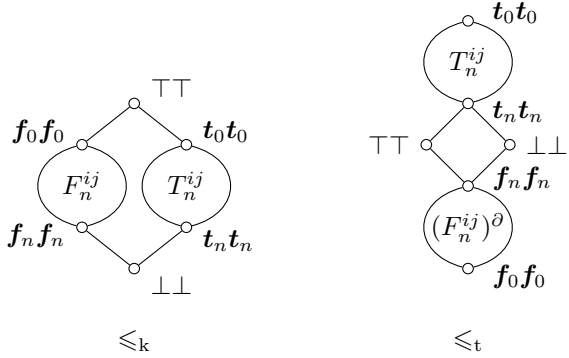

\begin{eg}
By Theorem~\ref{thm:MIJ_n^2},
up to converses, sets of meet-irreducible members of $\Sub(\JB_2^2)$ and $\Sub(\JB_3^2)$ are
\begin{align*}
\mathcal R_{(2)}^{\mathrm{mi}}&=\{S_{2,0}, S_{2,1}, S_{2,2},  R_{2,0,1}\} \text{ and }\\
\mathcal R_{(3)}^{\mathrm{mi}}&=\{S_{3,0}, S_{3,1}, S_{3,2}, S_{3,3}, R_{3,0,1}, R_{3,0,2}, R_{3,1,2}\},
\end{align*}
respectively. It follows at once that $\JT_2^{\mathrm{mi}} = \langle J_2; \mathcal{R}_{(2)}^{\mathrm{mi}}, \T\rangle$ yields a duality on $\CJ_2$ and that $\JT_3^{\mathrm{mi}} = \langle J_3; \mathcal{R}_{(3)}^{\mathrm{mi}}, \T\rangle$ yields a duality on $\CJ_3$. To obtain the $n = 2$ and $n = 3$ versions of the duality given in Theorem~\ref{cor:bigduality} we must prove that the relations $R_{2,0,1}$, $R_{3,0,1}$ and $R_{3,1,2}$ can be removed without destroying the dualities.
\end{eg}

Admissible constructs for entailment were investigated by Davey, Haviar and Priestley~\cite{DHP-SSE}.
They showed that there is a finite number of
admissible constructs that can be used to obtain $S$ from $\mathcal{R}$ whenever $\mathcal{R}\vdash S$.
An extensive list of constructs is given in~\cite[2.4.5]{CD98}. Here, in addition to intersection and converse, we need only one further construct.

Given compatible binary relations $R$ and $S$, their relational product
\[
R \cdot S := \{\, (a, b) \in M^2 \mid (\exists c\in M)\ (a, c) \in R \And (c, b)\in S\,\}
\]
is also a compatible binary relation and we denote the corresponding subalgebra of $\M^2$ by $\mathbf R \cdot \mathbf S$. In general, $\{R, S\}$ does not entail $R\cdot S$, but there is one important case where it does.

\begin{df}   Let $\M$ be a finite algebra and let $R$ and $S$ be compatible binary relations on~$\M$. We say that the relational product $R \cdot S$ is a \emph{homomorphic relational product} if there exists a homomorphism $u\colon \mathbf R \cdot \mathbf S \to \M$ such
that $(a, u(a, b)) \in R$ and $(u(a, b), b) \in S$, for all $(a, b) \in R\cdot S$. It is straightforward to check that if $R\cdot S$ is a homomorphic relational product, then $\{R, S\} \vdash R\cdot S$.
\end{df}

\begin{prop}\label{thm:homrelprod}
Let $n \in \omega\setminus\{0, 1\}$ and assume that $0\le i < n-1$. Then
${S_{n,i} \cdot S_{n, i+1}}$ is a homomorphic relational product and equals $R_{n,i,i+1}$.
Consequently, ${\{S_{n,i}, S_{n, i+1}\} \vdash R_{n,i,i+1}}$.
\end{prop}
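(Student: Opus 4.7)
The plan proceeds in three steps: first verify the set equality $S_{n,i}\cdot S_{n,i+1}=R_{n,i,i+1}$; next exhibit an explicit homomorphism $u\colon \mathbf R_{n,i,i+1}\to \JB_n$ that witnesses the composition as a homomorphic relational product; and finally deduce the entailment $\{S_{n,i},S_{n,i+1}\}\vdash R_{n,i,i+1}$ directly from the definition recalled immediately before the statement.

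The inclusion $R_{n,i,i+1}=S_{n,i}\cup S_{n,i+1}\subseteq S_{n,i}\cdot S_{n,i+1}$ is immediate: given $(x,y)\in S_{n,i}$, take $z=y$ and use reflexivity of $S_{n,i+1}$, and dually for elements of $S_{n,i+1}$. For the reverse inclusion I would case-split on where the coordinates of $(x,y)\in S_{n,i}\cdot S_{n,i+1}$ lie in $\{\top\},\{\bot\},\bF,\bT$. The only substantive case is $(x,y)=(\mbf_\alpha,\mbf_\beta)$: the existence of a witness $z=\mbf_\gamma$ forces $(\alpha>i\lor\gamma\le i)$ and $(\gamma>i+1\lor\beta\le i+1)$, and elementary logic shows that such a $\gamma$ exists precisely when $\lnot(\alpha\le i\land\beta\ge i+2)$, which is exactly membership in $R_{n,i,i+1}$ on the $\bF$-block. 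The $\bT$-block is handled by $\neg$-symmetry, and pairs involving $\top$ or $\bot$ collapse to $(\top,\top)$ or $(\bot,\bot)$ because these elements are isolated in both quasi-orders.

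For the second step, I define $u$ piecewise. On $R_{n,i,i+1}\cap(\bF\times\bF)$, partition into
\[
A=\{(\mbf_\alpha,\mbf_\beta):\alpha\le i+1,\ \beta\le i+1\},\ \ B=\{(\mbf_\alpha,\mbf_\beta):\alpha\ge i+1,\ \beta\ge i+1\},\ \ C=\{(\mbf_\alpha,\mbf_\beta):\alpha\ge i+2,\ \beta\le i\},
\]
and set $u(\mbf_\alpha,\mbf_\beta):=\mbf_\alpha$ on $A$, $\mbf_\beta$ on $B$, and $\mbf_{i+1}$ on $C$. The three regions are pairwise disjoint except at $(\mbf_{i+1},\mbf_{i+1})\in A\cap B$, where all prescriptions return $\mbf_{i+1}$, and together they cover $R_{n,i,i+1}\cap(\bF\times\bF)$ (the complementary rectangle $\{\mbf_0,\dots,\mbf_i\}\times\{\mbf_{i+2},\dots,\mbf_n\}$ is precisely what $R_{n,i,i+1}$ excludes). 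Extend by $\neg$-symmetry to $\bT\times\bT$ via $u(\mbt_\alpha,\mbt_\beta):=\neg\,u(\mbf_\alpha,\mbf_\beta)$, and set $u(\top,\top):=\top$ and $u(\bot,\bot):=\bot$.

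The main obstacle is verifying that $u$ is a homomorphism. Because $\otimes,\oplus,\wedge,\vee$ all act on $\bF\times\bF$ as coordinatewise $\min$ or $\max$ of indices, and mixed $\bF/\bT$ inputs collapse to $(\top,\top)$ or $(\bot,\bot)$ under the knowledge operations and preserve sorts under the truth operations, the essential task is to check that the index function $f(\alpha,\beta)$ given by the three-region recipe preserves coordinatewise $\min$ and $\max$. This is a finite case analysis over the six region combinations $AA$, $AB$, $AC$, $BB$, $BC$, $CC$; the region partition is itself closed under coordinatewise meet and join, and in each mixed case the identity hinges on the fact that the boundary index $i+1$ acts as an upper clamp on $A$-coordinates and a lower clamp on $B$-coordinates---I expect the $AC$ and $BC$ mixings to be the only non-trivial ones. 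Preservation of $\neg$, of the constants, and of all operations involving $\top$ and $\bot$ is routine. Once this is in hand, the witness conditions $(a,u(a,b))\in S_{n,i}$ and $(u(a,b),b)\in S_{n,i+1}$ are immediate on each region: in $A$, $u$ is the first projection and the second condition holds because $\beta\le i+1$; in $B$, $u$ is the second projection and the first condition holds because $\alpha\ge i+1$; in $C$, the constant value $\mbf_{i+1}$ satisfies both conditions because $\alpha\ge i+2>i$ and $\beta\le i<i+2$. The entailment $\{S_{n,i},S_{n,i+1}\}\vdash R_{n,i,i+1}$ then follows at once.
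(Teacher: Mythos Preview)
Your proposal is correct and follows essentially the same approach as the paper. Your three-region decomposition $A$, $B$, $C$ is a reparametrisation of the paper's three cases (the paper splits according to $k<i{+}1$, $k\ge i{+}1\land l\le i{+}1$, $l>i{+}1$), and both define the identical map $u$; the paper likewise leaves the lattice-homomorphism verification as ``easy to see'' with a supporting figure, and checks the witness conditions case by case exactly as you do.
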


\begin{proof}
The inclusion $R_{n,i,i+1} \subseteq S_{n,i} \cdot S_{n,i+1}$ is evident 
since the relations $S_{n,i}$ and $S_{n, i+1}$ are reflexive.
For the reverse inclusion, let us suppose that $(\mbf_k,\mbf_l)~\in~(S_{n,i} \cdot S_{n,i+1})\setminus R_{n,i,i + 1}$. From the description of $R_{n,i,i+ 1}$ given in Subsection~\ref{subsec:Rnij},  we know that $0\le k\le
i$ and $i + 2\le l\le n$. Hence $\mbf_k$ is from the top block of the relation $S_{n,i}$ and $\mbf_l$ is from the bottom block of the
relation $S_{n,i+1}$---see Figure~\ref{fig:RforJ3}.
Since $(\mbf_k,\mbf_l)\in S_{n,i} \cdot S_{n,i+1}$, there exists $c\in J_n$ such that $(\mbf_k,c)\in S_{n,i}$ and $(c,\mbf_l)\in S_{n,i+1}$.
Consequently, $c\in \{\mbf_0, \dots, \mbf_i\}\cap \{\mbf_{i+2}, \dots, \mbf_n\}$, a contradiction. The same argument can be applied to the pairs
$(\mbt_k,\mbt_l)\in (S_{n,i} \cdot S_{n,i+1})\setminus R_{n,i,i +1}$. Hence we obtain $R_{n,i,i+1} = S_{n,i} \cdot S_{n,i+1}$.

To show that this relational product is homomorphic, we define a
homomorphism $u\colon \mathbf{R}_{n,i,i+1} \to \JB_n$ such that for all
$(\mbf_k,\mbf_l)\in R_{n,i,i + 1}$  we have
\[
(\mbf_k,u(\mbf_k,\mbf_l))\in S_{n,i} \text{ and }
(u(\mbf_k,\mbf_l),\mbf_l)\in S_{n,i+1},
\]
and likewise for $(\mbt_k,\mbt_l)\in R_{n,i,i + 1}$. The homomorphism $u$ is defined on the pairs $(\mbf_k,\mbf_l)\in R_{n,i,i+1}$
by:
\[
u(\mbf_k,\mbf_l)
:= \begin{cases} \mbf_k & \quad\text{if $k < i + 1$},\\
\mbf_{i+1} & \quad\text{if $k \ge i + 1 \text{ and } l\le i + 1$},\\
\mbf_l & \quad\text{if $l > i + 1$},
\end{cases}
\]
and likewise on the pairs $(\mbt_k,\mbt_l)\in R_{n,i,i + 1}$. We note that the cases $k < i + 1$ and $l > i + 1$ cannot happen
simultaneously since $(\mbf_k,\mbf_l)\in
R_{n,i,i + 1}$ (see the description of $R_{n,i,i+1}$ in Subsection~\ref{subsec:Rnij}), and that for $k < i +1$ the map $u$ behaves as the first
projection on $(\mbf_k,\mbf_l)$, while for $l > i +1$ it behaves as the second projection on $(\mbf_k,\mbf_l)$.

The fact that the map $u$ is a lattice homomorphism is easy to see;
Figure~\ref{fig:Rnij} gives a drawing of $R_{n,i,i+1}$ and
Figure~\ref{fig:ker(u)} shows the kernel of the map $u \colon
\mathbf{R}_{3,0,1} \to \JB_3$ restricted to $F_{\!3}^{0,1}$. By
construction, the map $u$ preserves~$\neg$. Finally, $u$ preserves
the constants as each block of $\ker(u)$ contains a unique element
of the diagonal of $\JB_n^2$ (this is where we use the fact that we
are dealing with $R_{n,i,i+1}$ rather than a general $R_{n,i,j}$). Hence
indeed the map
$u\colon \mathbf{R}_{n,i,i+1} \to \JB_n$ is a homomorphism.

\begin{figure}
\centering
\begin{tikzpicture}[scale=0.75]
  \node[shaded] (f33) at (0,0) {};
  \node[unshaded] (f23) at (1,1) {};
  \node[unshaded] (f13) at (2,2) {};
  \node[unshaded] (f32) at (-1,1) {};
  \node[shaded] (f22) at (0,2) {};
  \node[unshaded] (f12) at (1,3) {};
  \node[unshaded] (f31) at (-2,2) {};
  \node[unshaded] (f21) at (-1,3) {};
  \node[shaded] (f11) at (0,4) {};
  \node[unshaded] (f01) at (1,5) {};
  \node[unshaded] (f30) at (-3,3) {};
  \node[unshaded] (f20) at (-2,4) {};
  \node[unshaded] (f10) at (-1,5) {};
  \node[shaded] (f00) at (0,6) {};
  \draw[order] (f33) -- (f23) -- (f13);
  \draw[order] (f32) -- (f22) -- (f12);
  \draw[order] (f31) -- (f21) -- (f11) -- (f01);
  \draw[order] (f30) -- (f20) -- (f10) -- (f00);
  \draw[order] (f33) -- (f32) -- (f31) -- (f30);
  \draw[order] (f23) -- (f22) -- (f21) -- (f20);
  \draw[order] (f13) -- (f12) -- (f11) -- (f10);
  \draw[order] (f01) -- (f00);
  \node[label,anchor=west] at (f33) {$\mbf_3\mbf_3$};
  \node[label,anchor=west] at (f23) {$\mbf_2\mbf_3$};
  \node[label,anchor=west] at (f13) {$\mbf_1\mbf_3$};
  \node[label,anchor=east] at (f32) {$\mbf_3\mbf_2$};
  \node[label,anchor=west] at (f22) {$\mbf_2\mbf_2$};
  \node[label,anchor=west] at (f12) {$\mbf_1\mbf_2$};
  \node[label,anchor=east] at (f31) {$\mbf_3\mbf_1$};
  \node[label,anchor=east] at (f21) {$\mbf_2\mbf_1$};
  \node[label,anchor=east] at (f11) {$\mbf_1\mbf_1$};
  \node[label,anchor=west] at (f01) {$\mbf_0\mbf_1$};
  \node[label,anchor=east] at (f30) {$\mbf_3\mbf_0$};
  \node[label,anchor=east] at (f20) {$\mbf_2\mbf_0$};
  \node[label,anchor=east] at (f10) {$\mbf_1\mbf_0$};
  \node[label,anchor=west] at (f00) {$\mbf_0\mbf_0$};
  \node[blob,rotate=45,inner xsep=1.5cm,inner ysep=0.45cm] at (f23) {};
  \node[blob,rotate=45,inner xsep=1.5cm,inner ysep=0.45cm] at (f22) {};
  \node[blob,rotate=45,inner xsep=1.5cm,inner ysep=0.95cm] at ($0.5*(f20)+0.5*(f21)$) {};
  \node[blob,rotate=-45,inner xsep=0.95cm,inner ysep=0.45cm] at ($0.5*(f00)+0.5*(f01)$) {};
\end{tikzpicture}
\caption{The kernel of $u \colon \mathbf{R}_{3,0,1} \to \JB_3$ restricted to $\F_{\!3}^{0,1}$}\label{fig:ker(u)}
\end{figure}

To complete the proof we first assume that $(\mbf_k,\mbf_l)\in R_{n,i,i + 1}$ with $k < i +1$. Then
\[
(\mbf_k,u(\mbf_k,\mbf_l))= (\mbf_k,\mbf_k)\in S_{n,i}. 
\]
As $k < i +1$ and $(\mbf_k,\mbf_l)\in R_{n,i,i + 1}$, it follows that
$l < i + 1$ and hence
\[
(u(\mbf_k,\mbf_l),\mbf_l)=(\mbf_k,\mbf_l)\in S_{n,i + 1}.
\]
Now assume that $(\mbf_k,\mbf_l)\in R_{n,i,i + 1}$
with $l > i + 1$. Then
necessarily $k > i$ and so
\[
(\mbf_k,u(\mbf_k,\mbf_l))= (\mbf_k,\mbf_l)\in S_{n,i} \text{ and }
(u(\mbf_k,\mbf_l),\mbf_l)=(\mbf_l,\mbf_l)\in S_{n,i + 1}.
\]
The final case that is that $k \ge i + 1$ and $l\le i + 1$. Then
\[
(\mbf_k,u(\mbf_k,\mbf_l))= (\mbf_k,\mbf_{i+1})\in S_{n,i} \text{ and }
(u(\mbf_k,\mbf_l),\mbf_l)=(\mbf_{i+1},\mbf_l)\in S_{n,i + 1}.
\]
The same arguments apply for the pairs $(\mbt_k,\mbt_l)\in R_{n,i,i + 1}$. Hence $R_{n,i,i + 1}$ is a homomorphic relational product of $S_{n,i}$ and $S_{n, i+1}$.
\end{proof}

\begin{rem}\label{rem:2nd-opt}
Let $n \in \omega\setminus\{0, 1\}$ and let $0\le i < j < n$. The first half of the proof of
Proposition~\ref{thm:homrelprod}
is easily modified to show that $R_{n,i,j} = S_{n,i} \cdot S_{n, j}$. We will see in Proposition~\ref{prop:Rnij-hom-min} that, for $j > i +1$, the relation $R_{n,i,j}$ is not entailed by $\{S_{n,i}, S_{n, j}\}$. Hence the relational product $S_{n,i} \cdot S_{n, j}$ is homomorphic if and only if $j = i + 1$.
\end{rem}

\begin{proof}[\textbf{Proof of Theorem~\ref{cor:bigduality}: duality}]

As we already observed, the NU Duality Theorem~\ref{NUDT} implies that $\JT^2_n = \langle J_n; \Sub(\JB_n^2), \T\rangle$, yields a duality on
$\CJ_n =\ISP(\JB_n)$, where $\Sub(\JB_n^2)$
is the set of all compatible binary relations on $\JB_n$. Moreover, by using the admissible constructs of intersection and converse, it follows from Theorem~\ref{thm:MI} that $\JT_n^{\mathrm{mi}} = \langle J_n; \mathcal{R}_{(n)}^{\mathrm{mi}}, \T\rangle$ yields a duality on $\CJ_n$, where
\begin{align*}
\mathcal \mathcal{R}_{(0)}^{\mathrm{mi}} &= \{S_{0,0}\}, \qquad \mathcal{R}_{(1)}^{\mathrm{mi}} = \{S_{1,0}, S_{1,1}\},
\qquad \mathcal{R}_{(2)}^{\mathrm{mi}} = \{S_{2,0}, S_{2,1}, S_{2,2}, R_{2,0,1}\}, \\
 \mathcal{R}_{(3)}^{\mathrm{mi}} &= \{S_{3,0}, S_{3,1}, S_{3,2}, S_{3,3}, R_{3,0,1}, R_{3,0,2}, R_{3,1,2}\},
\end{align*}
and, in general, for $n\ge 2$,
\[
\mathcal{R}_{(n)}^{\mathrm{mi}} = \big\{\,S_{n,i} \mid 0\le i \le n\,\big\} \cup \big\{\,R_{n,i,j}\mid i,j \in \{0,\ldots,n-1\} \text{ with } i < j\,\big\}.
\]
By Theorem~\ref{thm:homrelprod}, we can remove all the relations of the form $R_{n,i,i+1}$ from the alter egos without destroying the duality. This proves that, for all $n\in \omega$, the alter ego $\JT_n$ yields a duality on the quasivariety~$\CJ_n$. Other than the optimality claim, which will be proved in the next section, this proves~(1).

Both $\JB_0$ and $\JB_1$ are subdirectly irreducible
and have no proper subalgebras. Therefore they both have irreducibility index equal to~$1$---see \cite[page~82]{CD98}.
Hence, by  \cite[Theorem 3.3.7]{CD98}, the dualities induced by $\JT_0$ and $\JT_1$ are strong as the only compatible unary partial operations on $\JB_0$ and $\JB_1$ are the identity maps on $J_0$ and $J_1$, respectively. Hence (2) holds.
(The fact that these dualities are strong also follows from the
single-sorted version of our Special Multi-sorted NU Strong Duality Theorem~\ref{MultiNUDT}.)

Now let $n\ge 2$. As $\JB_n$ has no proper subalgebras and $\Con(\JB_n) \cong \two^n \oplus \one$, it follows that the irreducibility index of $\JB_n$ equals $n$. Hence, again by \cite[Theorem 3.3.7]{CD98}, the duality given by $\JT_n$ may be upgraded to a strong duality by adding all compatible $n$-ary partial operations on $\JB_n$ to the structure of the alter ego $\JT_n$. This proves (3).

Finally, (4) is an easy calculation.
\end{proof}

\section{Proving that the duality on $\CJ_n$ given by $\JT_n$ is optimal}\label{sec:opt}

The last step in the proof of Theorem~\ref{cor:bigduality} is to prove that the duality is optimal for all $n\in \omega$. We will do this in two steps.

\begin{description}
\item[\textit{Step 1}] We prove that, for all $n\in \omega$ and for $0\le i< n$, none of the relations $S_{n, i}$ can be deleted from $\mathcal R_{(n)}$ without destroying the duality.
\end{description}
For the second step of the proof, we require another definition. A compatible binary relation $R$ on a finite algebra $\M$ is \emph{absolutely unavoidable} within $\Sub(\M^2)$ if, for every set $\mathcal{R}$ of compatible binary relations on $\M$ such that $\MT = \langle M; \mathcal{R}, \T\rangle$ yields a duality on $\ISP(\M)$, we have $\mathcal{R}\cap \{R, R\conv\,\} \neq \varnothing$.

\begin{description}
\item[\textit{Step 2}] We prove that all of the remaining relations, that is, $S_{n,n}$, for $n\in \omega$, and $R_{n, i, j}$, for $n\ge 3$ and $i,j \in \{0,\ldots,n-1\}$  with $i < j-1$, are absolutely unavoidable and therefore cannot be deleted from the set $\mathcal R_{(n)}$ without destroying the duality.
\end{description}

To show that $S_{n, i}$ cannot be removed from the alter ego $\JT_n$ without destroying the duality, we must find an algebra $\A\in \CJ_n$ and a continuous map $\gamma \colon \mathrm{D}(\A) \to J_n$ that preserves all the relations in $\mathcal R_{(n)} \setminus \{S_{n, i}\}$ but does not preserve~$S_{n, i}$. The Test Algebra Lemma~\cite[8.1.3]{CD98} tells us that we can choose $\A$ to be the subalgebra $\S_{n, i}$ of $\JB_n^2$ with underlying set~$S_{n, i}$.

Let $S$ be a compatible binary relation on $\JB_n$ and let $\S$ be the subalgebra of $\JB_n^2$ with underlying set~$S$. Throughout this section, much use will be made of the two restricted projections
$\rho_i^\S := \pi_i\rest S \colon \S \to \JB_n$, for $i\in \{1, 2\}$. Since it will always be clear which restriction is intended, to simplify the notation we will write $\rho_i$ rather than $\rho_i^\S$.

The following proposition completes \emph{Step 1}.
\begin{prop}\label{prop:Sni-in-dual}
Let $n \in \omega\setminus \{0\}$. For each of the relations $S_{n,i}$ such that $0\le i \le n-1$, there is a map
\[
\gamma\colon \mathrm{D}(\mathbf{S}_{n,i}) \to J_n
\]
that preserves all the relations $S_{n,j}$, for $j\in \{0,\dots,n\}\setminus \{i\}$, but does not preserve the relation $S_{n,i}$. In addition, for all $n\ge 3$, the map $\gamma$ preserves all the relations $R_{n, j, k}$, for $j,k \in \{0,\ldots,n-1\}$  with $j < k-1$.
\end{prop}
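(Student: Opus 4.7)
The plan is to apply the Test Algebra Lemma~\cite[8.1.3]{CD98}. For each $i$ with $0\le i\le n-1$, it suffices to exhibit a continuous map $\gamma\colon \mathrm{D}(\mathbf{S}_{n,i})\to J_n$ that preserves every relation in $\mathcal{R}_{(n)}\setminus\{S_{n,i}\}$ but fails to preserve $S_{n,i}$. The canonical failure pair is $(\rho_1,\rho_2)$: these restricted projections lie in $S_{n,i}^{\mathrm{D}(\mathbf{S}_{n,i})}$ because $(\rho_1(s),\rho_2(s))=s\in S_{n,i}$ for every $s\in\mathbf{S}_{n,i}$, so the task reduces to arranging $(\gamma(\rho_1),\gamma(\rho_2))$ to land in the removed block $\{\mbf_0,\dots,\mbf_i\}\times\{\mbf_{i+1},\dots,\mbf_n\}$ of $S_{n,i}$.

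The natural target is the boundary pair $(\mbf_i,\mbf_{i+1})$, which sits in this removed block but, by direct inspection of Subsections~\ref{subsec:Sni} and~\ref{subsec:Rnij}, satisfies $(\mbf_i,\mbf_{i+1})\in S_{n,j}$ for every $j\in\{0,\dots,n\}\setminus\{i\}$ and $(\mbf_i,\mbf_{i+1})\in R_{n,j,k}$ for every $j,k$ with $j<k-1$. So the assignment $\rho_1\mapsto\mbf_i$, $\rho_2\mapsto\mbf_{i+1}$ is compatible, on this single pair, with preservation of every surviving relation; the analogous choice on the $\mbt$-side can be obtained via $\neg$. To extend the definition to all of $\mathrm{D}(\mathbf{S}_{n,i})$, I would use that $\JB_n$ has no proper subalgebras, so every $h\in\mathrm{D}(\mathbf{S}_{n,i})$ is surjective; the constants force $h$ to fix the diagonal of $\mathbf{S}_{n,i}$, and a short case analysis with the knowledge and truth orders pins each off-diagonal value $h(\mbf_k,\mbf_l)$ (and its $\mbt$-dual) into a small interval. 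Since $\mathrm{D}(\mathbf{S}_{n,i})$ is then finite, $\gamma$ may be specified point-by-point in terms of these values, chosen so that $\gamma(\rho_1)=\mbf_i$ and $\gamma(\rho_2)=\mbf_{i+1}$.

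The hard part will be verifying that this $\gamma$ preserves every $S_{n,j}$ for $j\ne i$ and every $R_{n,j,k}$ for $j<k-1$. For each such relation $R$ and each pointwise pair $(h,h')\in R^{\mathrm{D}(\mathbf{S}_{n,i})}$, one must check $(\gamma(h),\gamma(h'))\in R$. The structural point that drives the argument is that the removed block of $S_{n,i}$ is pinpointed at the single level $i$, whereas the removed block of every surviving relation either avoids that level entirely or spans several levels which include it, so the defect introduced at $(\mbf_i,\mbf_{i+1})$ remains invisible to every other relation. The main obstacle is the bookkeeping required to track this through all pointwise pairs in the dual, together with writing the case-by-case recipe for $\gamma$ cleanly enough that the verification remains manageable.
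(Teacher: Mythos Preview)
Your core idea is the same as the paper's: take $(\gamma(\rho_1),\gamma(\rho_2))=(\mbf_i,\mbf_{i+1})$, which lies in the removed block of $S_{n,i}$ but in every other $S_{n,j}$ and every $R_{n,j,k}$ with $j<k-1$. Where you diverge is in the extension of $\gamma$ to the rest of $\mathrm{D}(\mathbf{S}_{n,i})$ and in the verification step, both of which you make much harder than necessary.

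You propose to analyse the full structure of $\mathrm{D}(\mathbf{S}_{n,i})$, pin down each homomorphism via a case analysis, assign values to $\gamma$ point-by-point depending on those values, and then check preservation pair-by-pair. None of this is needed. The paper simply sets $\gamma(\rho_1)=\mbf_i$ and $\gamma(h)=\mbf_{i+1}$ for \emph{every} other $h\in\mathrm{D}(\mathbf{S}_{n,i})$. The point you are missing is that once the image of $\gamma$ is contained in $\{\mbf_i,\mbf_{i+1}\}$, preservation of any relation $R$ with $\{\mbf_i,\mbf_{i+1}\}^2\subseteq R$ is automatic: for any pair $(h,h')$ whatsoever, $(\gamma(h),\gamma(h'))\in\{\mbf_i,\mbf_{i+1}\}^2\subseteq R$. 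A direct check of the definitions in Subsections~\ref{subsec:Snn}--\ref{subsec:Rnij} shows that $\{\mbf_i,\mbf_{i+1}\}^2\subseteq S_{n,j}$ for every $j\in\{0,\dots,n\}\setminus\{i\}$ (for $j<n$ the removed block meets $\{\mbf_i,\mbf_{i+1}\}^2$ only at $(\mbf_i,\mbf_{i+1})$ when $j=i$; for $j=n$ the whole of $\bF^2$ is present), and since $R_{n,j,k}=S_{n,j}\cup S_{n,k}$ with $j<k-1$ forces $j\ne i$ or $k\ne i$, the same inclusion holds for each $R_{n,j,k}$. So the ``bookkeeping'' you flag as the main obstacle evaporates entirely: no information about $\mathrm{D}(\mathbf{S}_{n,i})$ beyond the existence of $\rho_1$ and $\rho_2$ is required.
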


\begin{proof}
Define a map $\gamma\colon \mathrm{D}(\mathbf{S}_{n,i}) \to J_n$ by
\[
\gamma(h) := \begin{cases} \mbf_i & \quad\text{if $h=\rho_1$},\\
\mbf_{i+1} & \quad\text{otherwise}.
\end{cases}
\]
The fact that $\gamma$ does not preserve the relation $S_{n,i}$ is witnessed on the pair $(\rho_1,\rho_2)$  from the dual $\mathrm{D}(\mathbf{S}_{n,i})$ as 
$(\rho_1,\rho_2)\in S_{n,i}^{\mathrm{D}(\mathbf{S}_{n,i})}$ yet
$(\gamma(\rho_1),\gamma(\rho_2))\notin S_{n,i}$.
That $\gamma$ preserves all the relations $S_{n,j}$, for $j\in \{0,\dots,n-1\}\setminus \{i\}$, follows by observing that $\{\mbf_i, \mbf_{i+1}\}^2\subseteq S_{n,j}$, for $j\in \{0,\dots,n-1\}\setminus \{i\}$. Indeed, for $h_1,h_2\in \mathrm{D}(\mathbf{S}_{n,i})$ with $(h_1,h_2)\in
S_{n,j}^{\mathrm{D}(\mathbf{S}_{n,i})}$ we have
$(\gamma(h_1),\gamma(h_2))\in \{\mbf_i, \mbf_{i+1}\}^2$ and so
$(\gamma(h_1),\gamma(h_2))\in S_{n,j}$. Now assume 
$n\ge 3$. Since $R_{n, j, k} = S_{n,j}\cup S_{n,k}$, the assumption that $j <
k-1$ guarantees that at least one of $j$ and $k$ is not $i$. Hence
the same argument shows that $\gamma$ preserves $R_{n, j, k}$, for
all $j,k \in \{0,\ldots,n-1\}$ with $j < k-1$.
\end{proof}

We now turn to \emph{Step 2}.
The following general result is new and provides a useful sufficient condition for a compatible binary relation to be absolutely unavoidable. Recall that a compatible binary relation $S$ on $\M$ is \defn{hom-minimal} if $\mathrm{D}(\S) = \{\rho_1, \rho_2\}$.

\begin{prop}\label{lem:hommin}
Let $\M$ be a finite algebra and let $S$ be a compatible binary relation on $\M$.
\begin{enumerate}[ \normalfont(1)]

\item
If $S$ is hom-minimal, is a value at $(a, b)$ and satisfies
$a\in \rho_1(S)$ and $b\in \rho_2(S)$, then $S$ is absolutely unavoidable within $\Sub(\M^2)$.

\item If $S$ is hom-minimal, diagonal and meet-irreducible in $\Sub(\M^2)$, then $S$ is absolutely unavoidable within $\Sub(\M^2)$.

\end{enumerate}
\end{prop}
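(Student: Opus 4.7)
\medskip

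The plan is to prove (1) directly via the Test Algebra Lemma, and then derive (2) from (1) together with Lemma~\ref{lem:CMI=Value}. For (1), suppose that $\mathcal R$ is any set of compatible binary relations on $\M$ such that $\langle M;\mathcal R,\T\rangle$ yields a duality on $\CA=\ISP(\M)$. I will test with the algebra $\mathbf S$ itself: by hom-minimality, $\mathrm{D}(\mathbf S)=\{\rho_1,\rho_2\}$, and I will define a candidate morphism
\[
\gamma\colon \mathrm{D}(\mathbf S)\to M,\qquad \gamma(\rho_1):=a,\ \gamma(\rho_2):=b.
\]
If $\gamma$ were a morphism in the dual category, then surjectivity of $e_{\mathbf S}$ (which follows from the duality) would give some $s=(s_1,s_2)\in S$ with $e_{\mathbf S}(s)=\gamma$, forcing $(s_1,s_2)=(a,b)\in S$; but this contradicts the hypothesis that $S$ is a value at $(a,b)$. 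So $\gamma$ must fail to preserve some $R\in\mathcal R$.

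Since $R$ is binary and $\mathrm{D}(\mathbf S)=\{\rho_1,\rho_2\}$, the failure must be witnessed by one of the four ordered pairs $(\rho_i,\rho_j)\in R^{\mathrm D(\mathbf S)}$. I will rule out the two ``diagonal'' witnesses $(\rho_1,\rho_1)$ and $(\rho_2,\rho_2)$ using the hypothesis $a\in\rho_1(S)$ and $b\in\rho_2(S)$: for instance, if $(\rho_1,\rho_1)\in R^{\mathrm D(\mathbf S)}$ then $(\rho_1(s),\rho_1(s))\in R$ for every $s\in S$, and picking $s\in S$ with $\rho_1(s)=a$ yields $(a,a)=(\gamma(\rho_1),\gamma(\rho_1))\in R$, a contradiction; the case of $\rho_2$ is symmetric. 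Thus the witness is $(\rho_1,\rho_2)$ or $(\rho_2,\rho_1)$. In the first case, $(\rho_1,\rho_2)\in R^{\mathrm D(\mathbf S)}$ translates to $S\subseteq R$, while $(\gamma(\rho_1),\gamma(\rho_2))=(a,b)\notin R$; maximality of $S$ as a value at $(a,b)$ then forces $R=S$. The second case gives $S\convl\subseteq R$ and $(b,a)\notin R$, which similarly forces $R\convl=S$, i.e.\ $R=S\convl$. Either way, $\mathcal R\cap\{S,S\convl\}\ne\varnothing$, which is exactly absolute unavoidability.

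For (2), I only need to observe that the hypotheses reduce to (1). Being diagonal means $S$ contains the diagonal of $M^2$, so $\rho_1(S)=\rho_2(S)=M$; the conditions $a\in\rho_1(S)$ and $b\in\rho_2(S)$ in (1) are then automatic for \emph{any} $(a,b)\in M^2$. Because $\Sub(\M^2)$ is finite, meet-irreducibility coincides with complete meet-irreducibility, so by Lemma~\ref{lem:CMI=Value} there exists $(a,b)\in M^2$ at which $S$ is a value. Applying (1) to this $(a,b)$ finishes the proof.

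I expect the one subtle point to be the case analysis in the second paragraph: namely, seeing clearly that the failure of $\gamma$ to preserve a binary $R\in\mathcal R$ is witnessed by an ordered pair drawn from the two-element set $\{\rho_1,\rho_2\}$, and then converting each of the four possible witnesses into a statement about containment of $S$ or $S\convl$ in $R$. Everything else is a routine application of the Test Algebra Lemma and the definition of a value.
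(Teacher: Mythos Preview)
Your proof is correct and follows essentially the same approach as the paper: both define the same map $\gamma(\rho_1)=a$, $\gamma(\rho_2)=b$ on $\mathrm D(\mathbf S)=\{\rho_1,\rho_2\}$ and carry out the identical four-case analysis on pairs $(\rho_i,\rho_j)$, with (2) reduced to (1) via Lemma~\ref{lem:CMI=Value}. The only cosmetic difference is that the paper phrases the main step as ``$\gamma$ preserves every $R\in\mathcal R_\M\setminus\{S,S\convl\}$'' whereas you phrase the contrapositive ``if $\gamma$ fails $R$ then $R\in\{S,S\convl\}$''; the underlying argument is the same.
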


\begin{proof}
Since (2) is an immediate consequence of (1), by Lemma~\ref{lem:CMI=Value}, we prove only~(1). Assume $S$ is hom-minimal, is a value at $(a, b)$ and satisfies $a\in \rho_1(S)$ and $b\in \rho_2(S)$. Since $S$ is hom-minimal we may define $\gamma \colon \mathrm{D}(\mathbf S)\to M$ by $\gamma(\rho_1) = a$ and $\gamma(\rho_2) = b$. Since $(\rho_1, \rho_2) \in S^{\mathrm{D}(\mathbf S)}$ and $(a, b) \notin S$, the map $\gamma$ does not preserve~$S$, and it remains to prove that $\gamma$ preserves every
relation $R$ in $\mathcal{R}_{\M}\comp \{S, S\conv\,\}$.

Let $R\in \mathcal{R}_{\M}\comp \{S, S\conv\,\}$ and assume $(x,y) \in R^{\mathrm{D}(\mathbf S)}$, for some $x, y \in \mathrm{D}(\mathbf{S})$. We must prove that $(\gamma(x), \gamma(y)) \in R$.
We consider separately the four cases for the pair $(x, y)$.

Assume that $(\rho_1, \rho_2) \in R^{\mathrm{D}(\mathbf S)}$. Then $S\subseteq R$. As $S\ne R$ and $S$ is a value at $(a, b)$,
we have
\[
(\gamma(x), \gamma(y)) = (\gamma(\rho_1), \gamma(\rho_2)) = (a,b) \in R.
\]

Assume that $(\rho_2, \rho_1) \in R^{\mathrm{D}(\mathbf S)}$. Then $S\conv\subseteq R$ and hence $S\subseteq R\conv$. As $S\ne R\conv$, we have $(a, b) \in R\conv$, whence $(b, a)\in R$. Thus,
\[
(\gamma(x), \gamma(y)) = (\gamma(\rho_2), \gamma(\rho_1)) = (b,a) \in R.
\]

Now assume that $(\rho_1, \rho_1) \in R^{\mathrm{D}(\mathbf S)}$. Then $\{\, (c, c) \mid c\in \rho_1(S)\,\}\subseteq R$. As $a\in \rho_1(S)$, by assumption we have
\[
(\gamma(x), \gamma(y)) = (\gamma(\rho_1), \gamma(\rho_1)) = (a,a) \in R.
\]
The case where $(\rho_2, \rho_2) \in R^{\mathrm{D}(\mathbf S)}$ follows by symmetry using the fact that $b\in \rho_2(S)$.
Hence $\gamma$ preserves $R$, as required.
\end{proof}

We will now show that, for all $n\in \omega\setminus\{0\}$, the relation $S_{n,n}$ is hom-minimal, and that, for all $n\ge 3$, the relation $R_{n,i,j}$ is hom-minimal for all $i,j \in \{0,\ldots,n-1\}$ with $i < j-1$; it will then follow, by Proposition~\ref{lem:hommin}(2),
that each of these relations is absolutely unavoidable within $\Sub(\JB_n^2)$. While $S_{0,0}$ is not hom-minimal (indeed, a simple calculation shows that $|\CJ_0(\S_{0,0}, \JB_0)| = 6$), we will prove directly that $S_{0,0}$ is absolutely unavoidable within $\mathcal{R}_{\JB_0}$.

\begin{prop}\label{prop:dualSnnSIX=pi1pi2}
For all $n\in \omega\setminus\{0\}$, the relation $S_{n,n}$ is hom-minimal.
\end{prop}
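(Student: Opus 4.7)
The plan is to fix an arbitrary $h\in \mathrm{D}(\mathbf{S}_{n,n})$ and show $h\in\{\rho_1,\rho_2\}$ by pinning down $h$ on one key pair of elements and then propagating via the bilattice operations. Since every element of $J_n$ is a constant of $\JB_n$ and the whole diagonal $\{(c,c)\mid c\in J_n\}$ lies in $S_{n,n}$, the map $h$ is immediately forced to fix the diagonal: $h(c,c)=c$ for every $c\in J_n$.

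The crux is the analysis of $h$ on the pair $(\bot,\mbf_0)$ and $(\mbf_0,\bot)$. The identities
\[
(\bot,\mbf_0)\oplus(\mbf_0,\bot)=(\mbf_0,\mbf_0)\quad\text{and}\quad(\bot,\mbf_0)\otimes(\mbf_0,\bot)=(\bot,\bot)
\]
pin down the knowledge-join and knowledge-meet of $h(\bot,\mbf_0),h(\mbf_0,\bot)$ as $\mbf_0$ and $\bot$ respectively. A one-line inspection of the chain $\bot<_\k\mbf_n<_\k\cdots<_\k\mbf_0$ of elements $\le_\k\mbf_0$ then shows that the only unordered pair in $J_n$ with knowledge-join $\mbf_0$ and knowledge-meet $\bot$ is $\{\mbf_0,\bot\}$. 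This splits the problem into two cases, perfectly aligned with $\rho_2$ and $\rho_1$.

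In the case $h(\bot,\mbf_0)=\mbf_0$, $h(\mbf_0,\bot)=\bot$, the propagation proceeds in four short waves. First, applying $h$ to $(\mbf_i,\bot)\otimes(\mbf_0,\bot)=(\mbf_i,\bot)$ gives $h(\mbf_i,\bot)\otimes\bot=h(\mbf_i,\bot)$, which forces $h(\mbf_i,\bot)=\bot$; the $\oplus$-identity $(\bot,\mbf_i)\oplus(\mbf_i,\bot)=(\mbf_i,\mbf_i)$ then upgrades this to $h(\bot,\mbf_i)=\mbf_i$. Second, applying $\neg$ transfers these identities to the $\mbt$-side: $h(\mbt_i,\bot)=\bot$ and $h(\bot,\mbt_i)=\mbt_i$. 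Third, the decompositions $(\mbf_i,\mbf_j)=(\mbf_i,\bot)\oplus(\bot,\mbf_j)$ and its $\mbt$-analogue deliver $h(\mbf_i,\mbf_j)=\mbf_j$ and $h(\mbt_i,\mbt_j)=\mbt_j$. Finally, writing $(\bot,\top)=(\bot,\mbf_0)\oplus(\bot,\mbt_0)$ yields $h(\bot,\top)=\mbf_0\oplus\mbt_0=\top$, after which $(c,\top)=(c,c)\oplus(\bot,\top)$ gives $h(c,\top)=c\oplus\top=\top$ for every $c\in J_n$ with $(c,\top)\in S_{n,n}$. Every computed value matches $\rho_2$.

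The other case is handled by a mirror-image argument: one uses $(\bot,\mbf_i)\otimes(\bot,\mbf_0)=(\bot,\mbf_i)$ as the starting identity (forcing $h(\bot,\mbf_i)=\bot$), the same pair argument for $h(\mbf_i,\bot)=\mbf_i$, the $\neg$-symmetry for the $\mbt$-side, and $(\bot,c)=(\bot,\top)\otimes(\bot,c)$ together with $h(\bot,\top)=\bot$ to dispose of the top row in the second coordinate; the outcome is $h=\rho_1$. I do not expect a serious obstacle: the only genuinely non-obvious step is noticing that $(\bot,\top)=(\bot,\mbf_0)\oplus(\bot,\mbt_0)$, which is needed to determine $h$ at the `isolated' element $(\bot,\top)$; once this identity is in hand, all remaining values of $h$ fall out from short compositions of bilattice operations applied to already-determined values.
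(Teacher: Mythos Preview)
Your argument breaks down at the very first step: the element $(\mbf_0,\bot)$ does \emph{not} belong to $S_{n,n}$. Recall that
\[
S_{n,n} = \big(J_n \times \{\top\}\big) \cup \big(\{\bot\} \times J_n\big) \cup \bF^2 \cup \bT^2,
\]
so the only pairs with second coordinate $\bot$ are $(\bot,\bot)$ and $(\top,\bot)$\,---\,wait, not even $(\top,\bot)$: only $(\bot,\bot)$. Thus $h$ is simply not defined at $(\mbf_0,\bot)$, and likewise not at $(\mbf_i,\bot)$ or $(\mbt_i,\bot)$. This kills the initial dichotomy $(\bot,\mbf_0)\oplus(\mbf_0,\bot)=(\mbf_0,\mbf_0)$, the ``first wave'' identity $(\mbf_i,\bot)\otimes(\mbf_0,\bot)=(\mbf_i,\bot)$, the $\neg$-transfer to the $\mbt$-side, and the decomposition $(\mbf_i,\mbf_j)=(\mbf_i,\bot)\oplus(\bot,\mbf_j)$. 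The mirror case inherits the same problem when you invoke ``the same pair argument for $h(\mbf_i,\bot)=\mbf_i$''.

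The asymmetry in $S_{n,n}$ is the whole point: it is the quasi-order with $\bot$ below everything and $\top$ above everything, so pairs of the form $(c,\top)$ and $(\bot,c)$ are available but $(c,\bot)$ and $(\top,c)$ (for $c\notin\{\top,\bot\}$) are not. Any correct argument has to respect this. The paper's route is to start from the $\neg$-fixed point $(\bot,\top)$: since $\neg(\bot,\top)=(\bot,\top)$, one gets $h(\bot,\top)\in\{\bot,\top\}$ for free, and then one works with the legitimate elements $(\mbf_0,\top)$ and $(\bot,\mbf_0)$ of $S_{n,n}$, using $(\mbf_0,\top)\vee(\bot,\mbf_0)=(\bot,\top)$ and $(\mbf_0,\top)\wedge(\bot,\mbf_0)=(\mbf_0,\mbf_0)$ together with a $\le_\k$-comparison (this is where $n\ge 1$ enters) to pin down $h$ on a generating set of the truth-lattice reduct. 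You could try to salvage your approach by replacing $(\mbf_0,\bot)$ with $(\mbf_0,\top)$ throughout, but then the clean $\oplus/\otimes$ identities you rely on no longer hold and a different organising idea is needed.
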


\begin{proof}
Let $n\in \omega\setminus\{0\}$ and let $h\colon \mathbf{S}_{n,n}
\to \JB_n$ be a homomorphism. To show $h\in \{\rho_1,
\rho_2\}$, we first analyse the structure of the truth-lattice
reduct of $\S_{n,n}$.

The truth-lattice order on $\S_{n,n}$ is best viewed as
\begin{multline*}
\big(\{\mbf_0, \dots, \mbf_n, \bot\} \times \{\mbf_0, \dots,
\mbf_n, \top\}\big)\\ \cup \big(\{\bot, \mbt_n, \dots, \mbt_0\} \times
\{\top, \mbt_n, \dots, \mbt_0\}\big) \cup \{(\top,\top), (\bot,\bot)\},
\end{multline*}
that is, the union of a product of two chains with
another product of two chains, that overlap only at $(\bot,\top)$,
with two additional elements, $(\top,\top)$ and $(\bot,\bot)$,
added---see Figure~\ref{fig:S00S11}.

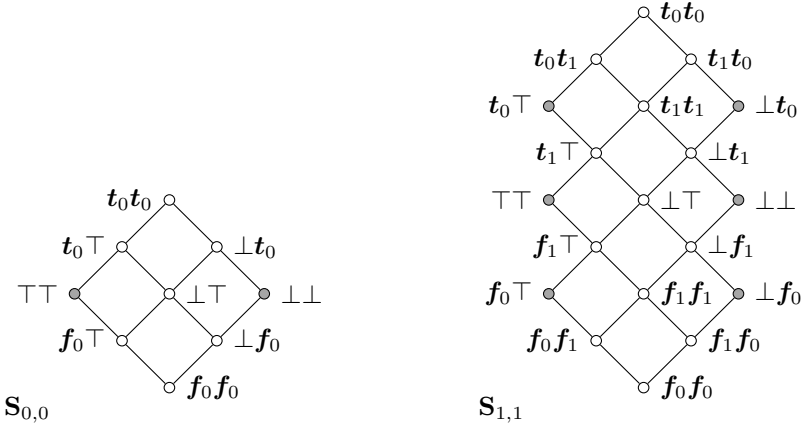
\begin{figure}[ht]
\centering
\begin{tikzpicture}[scale=0.625]
  \begin{scope}
    \node[anchor=north] at (-3,0) {$\S_{0,0}$};
    \node[unshaded] (f0f0) at (0,0) {};
    \node[unshaded] (botf0) at (1,1) {};
    \node[shaded] (botbot) at (2,2) {};
    \node[unshaded] (f0top) at (-1,1) {};
    \node[unshaded] (bottop) at (0,2) {};
    \node[unshaded] (bott0) at (1,3) {};
    \node[shaded] (toptop) at (-2,2) {};
    \node[unshaded] (t0top) at (-1,3) {};
    \node[unshaded] (t0t0) at (0,4) {};
    \draw[order] (f0f0) -- (botf0) -- (botbot);
    \draw[order] (f0top) -- (bottop) -- (bott0);
    \draw[order] (toptop) -- (t0top) -- (t0t0);
    \draw[order] (f0f0) -- (f0top) -- (toptop);
    \draw[order] (botf0) -- (bottop) -- (t0top);
    \draw[order] (botbot) -- (bott0) -- (t0t0);
    \node[label,anchor=west] at (f0f0) {$\mbf_0\mbf_0$};
    \node[label,anchor=west] at (botf0) {$\bot\mbf_0$};
    \node[label,anchor=west] at (botbot) {$\bot\bot$};
    \node[label,anchor=east] at (f0top) {$\mbf_0\top$};
    \node[label,anchor=west] at (bottop) {$\bot\top$};
    \node[label,anchor=west] at (bott0) {$\bot\mbt_0$};
    \node[label,anchor=east] at (toptop) {$\top\top$};
    \node[label,anchor=east] at (t0top) {$\mbt_0\top$};
    \node[label,anchor=east] at (t0t0) {$\mbt_0\mbt_0$};
  \end{scope}
  \begin{scope}[xshift=10cm]
    \node[anchor=north] at (-3,0) {$\S_{1,1}$};
    \node[unshaded] (f0f0) at (0,0) {};
    \node[unshaded] (f1f0) at (1,1) {};
    \node[shaded] (botf0) at (2,2) {};
    \node[unshaded] (f0f1) at (-1,1) {};
    \node[unshaded] (f1f1) at (0,2) {};
    \node[unshaded] (botf1) at (1,3) {};
    \node[shaded] (botbot) at (2,4) {};
    \node[shaded] (f0top) at (-2,2) {};
    \node[unshaded] (f1top) at (-1,3) {};
    \node[unshaded] (bottop) at (0,4) {};
    \node[unshaded] (bott1) at (1,5) {};
    \node[shaded] (bott0) at (2,6) {};
    \node[shaded] (toptop) at (-2,4) {};
    \node[unshaded] (t1top) at (-1,5) {};
    \node[unshaded] (t1t1) at (0,6) {};
    \node[unshaded] (t1t0) at (1,7) {};
    \node[shaded] (t0top) at (-2,6) {};
    \node[unshaded] (t0t1) at (-1,7) {};
    \node[unshaded] (t0t0) at (0,8) {};
    \draw[order] (f0f0) -- (f1f0) -- (botf0);
    \draw[order] (f0f1) -- (f1f1) -- (botf1) -- (botbot);
    \draw[order] (f0top) -- (f1top) -- (bottop) -- (bott1) -- (bott0);
    \draw[order] (toptop) -- (t1top) -- (t1t1) -- (t1t0);
    \draw[order] (t0top) -- (t0t1) -- (t0t0);
    \draw[order] (f0f0) -- (f0f1) -- (f0top);
    \draw[order] (f1f0) -- (f1f1) -- (f1top) -- (toptop);
    \draw[order] (botf0) -- (botf1) -- (bottop) -- (t1top) -- (t0top);
    \draw[order] (botbot) -- (bott1) -- (t1t1) -- (t0t1);
    \draw[order] (bott0) -- (t1t0) -- (t0t0);
    \node[label,anchor=west] at (f0f0) {$\mbf_0\mbf_0$};
    \node[label,anchor=west] at (f1f0) {$\mbf_1\mbf_0$};
    \node[label,anchor=west] at (botf0) {$\bot\mbf_0$};
    \node[label,anchor=east] at (f0f1) {$\mbf_0\mbf_1$};
    \node[label,anchor=west] at (f1f1) {$\mbf_1\mbf_1$};
    \node[label,anchor=west] at (botf1) {$\bot\mbf_1$};
    \node[label,anchor=west] at (botbot) {$\bot\bot$};
    \node[label,anchor=east] at (f0top) {$\mbf_0\top$};
    \node[label,anchor=east] at (f1top) {$\mbf_1\top$};
    \node[label,anchor=west] at (bottop) {$\bot\top$};
    \node[label,anchor=west] at (bott1) {$\bot\mbt_1$};
    \node[label,anchor=west] at (bott0) {$\bot\mbt_0$};
    \node[label,anchor=east] at (toptop) {$\top\top$};
    \node[label,anchor=east] at (t1top) {$\mbt_1\top$};
    \node[label,anchor=west] at (t1t1) {$\mbt_1\mbt_1$};
    \node[label,anchor=west] at (t1t0) {$\mbt_1\mbt_0$};
    \node[label,anchor=east] at (t0top) {$\mbt_0\top$};
    \node[label,anchor=east] at (t0t1) {$\mbt_0\mbt_1$};
    \node[label,anchor=west] at (t0t0) {$\mbt_0\mbt_0$};
  \end{scope}
\end{tikzpicture}
\caption{The truth-lattice reducts of $\S_{0,0}$ and $\S_{1,1}$.}\label{fig:S00S11}
\end{figure}

It is easily seen that the set $D$ of doubly-irreducible elements of the truth-lattice reduct of $\S_{n,n}$ is
\[
D = \{(\top,\top), (\mbf_0,\top), (\bot,\mbf_0), (\mbt_0,\top), (\bot,\mbt_0), (\bot,\bot) \}.
\]
See the right of Figure~\ref{fig:S00S11}, where doubly-irreducible elements are shaded. (Note that this uses our assumption that $n\ne 0$ as it fails in $\S_{0,0}$; see the left of Figure~\ref{fig:S00S11}.) It is also easily seen that the truth-lattice reduct of $\S_{n,n}$ is generated as a lattice by
\[
D\cup
\{\,(\mbf_i,\mbf_i) \mid 1 \le i\le n\,\} \cup \{\,(\mbt_i,\mbt_i) \mid 1 \le i\le n\,\}.
\]
(For example, $(\mbf_n,\top) = (\mbf_0,\top) \vee (\mbf_n,\mbf_n)$,
$(\bot,\top) = (\mbf_0,\top) \vee (\bot,\mbf_0)$, and, if $i\le j$, then
$(\mbf_i,\mbf_j) =  ((\mbf_i,\mbf_i) \vee (\mbf_0,\top))\wedge ((\mbf_j,\mbf_j) \vee (\bot,\mbf_0))$.)
Hence, to prove that $h = \rho_1$, for example, it suffices to prove that $h$ acts as the first projection on $D$, and thus, since $h$ preserves $\neg$, it suffices to prove that
$h((\mbf_0,\top)) = \mbf_0$ and that $h((\bot,\mbf_0)) = \bot$.

Since
$\neg(\bot,\top) = (\bot,\top)$, we have $h((\bot,\top)) \in \{\bot,\top\}$.
Without loss of generality we may assume that
$h((\bot,\top)) = \bot$. From the equalities
\[
(\mbf_0,\top) \vee (\bot,\mbf_0) =(\bot,\top), \quad (\mbf_0,\top) \wedge (\bot,\mbf_0) = (\mbf_0,\mbf_0), \quad h((\bot,\top)) =
\bot,
\]
and $h((\mbf_0,\mbf_0)) = \mbf_0$, it follows that
\begin{multline*}
\big(h((\mbf_0,\top)) = \mbf_0 \And h((\bot,\mbf_0)) = \bot\big)
\\
 \text{or}\quad\big(h((\mbf_0,\top)) = \bot \And h((\bot,\mbf_0)) = \mbf_0\big).
\end{multline*}
As the former completes
the proof, suppose that the latter holds. Since $h$ preserves
$\le_\k$ and $(\mbf_n,\top) \le_\k (\mbf_0,\top)$, we have
$h((\mbf_n,\top)) = \bot$, whence ${\mbf_n = h((\mbf_n,\mbf_n))
\le_\k h((\mbf_n,\top)) = \bot}$,
a contradiction. Hence
$h((\mbf_0,\top)) = \mbf_0$ and $h((\bot,\mbf_0)) = \bot$, as
required.
\end{proof}

\begin{prop}\label{prop:absunavoid}
Let $n \in \omega$. The relation $S_{n,n}$ is absolutely unavoidable
within $\Sub(\JB_n^2)$.
\end{prop}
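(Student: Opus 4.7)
The plan is to split into two cases: $n\ge 1$, where we can apply the general hom-minimality criterion of Proposition~\ref{lem:hommin}(2), and $n=0$, where (as noted in the paragraph preceding Proposition~\ref{prop:dualSnnSIX=pi1pi2}) $S_{0,0}$ is not hom-minimal and a direct argument is needed.

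For $n\ge 1$ I would simply check the three hypotheses of Proposition~\ref{lem:hommin}(2) for $S_{n,n}$. Hom-minimality is precisely the content of Proposition~\ref{prop:dualSnnSIX=pi1pi2}; the relation $S_{n,n}$ contains the diagonal since it is a reflexive quasi-order (see Figure~\ref{fig:Sni}); and $S_{n,n}$ appears explicitly in the list of meet-irreducibles in Theorem~\ref{thm:MIJ_n^2}. These three facts together deliver absolute unavoidability.

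For $n=0$ my first step is to pin down $\Sub(\JB_0^2)$ completely. By Theorem~\ref{thm:MIJ_n^2} the only meet-irreducibles of $\Sub(\JB_0^2)$ are $S_{0,0}$ and $S\convsub{0,0}$, and since every non-top element of a finite lattice is an intersection of meet-irreducibles, we obtain $\Sub(\JB_0^2) = \{\Delta_{J_0},\, S_{0,0},\, S\convsub{0,0},\, J_0^2\}$, where $\Delta_{J_0}=S_{0,0}\cap S\convsub{0,0}$. Hence any $\mathcal{R}\subseteq \Sub(\JB_0^2)$ disjoint from $\{S_{0,0}, S\convsub{0,0}\}$ satisfies $\mathcal{R}\subseteq \{\Delta_{J_0}, J_0^2\}$, and both of these relations are preserved by every map $J_0\to J_0$. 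Taking the test algebra $\A = \S_{0,0}$ and using the fact $|\mathrm{D}(\S_{0,0})| = 6$ recorded in the paragraph preceding Proposition~\ref{prop:dualSnnSIX=pi1pi2}, $\mathrm{ED}(\S_{0,0})$ would then consist of all $4^6$ functions from $\mathrm{D}(\S_{0,0})$ into $J_0$, vastly exceeding $|\S_{0,0}| = 9$. Consequently $e_{\S_{0,0}}$ cannot be surjective and $\mathcal{R}$ fails to yield a duality on $\CJ_0$.

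The main obstacle will be the $n=0$ case, since the slick hom-minimality route of Proposition~\ref{lem:hommin}(2) is not available there and one has to exhibit an explicit test algebra that defeats every candidate $\mathcal{R}$; for $n\ge 1$ the argument reduces to invoking Proposition~\ref{prop:dualSnnSIX=pi1pi2} together with Theorem~\ref{thm:MIJ_n^2}.
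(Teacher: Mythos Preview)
Your proposal is correct and follows essentially the same route as the paper: for $n\ge 1$ you invoke Proposition~\ref{lem:hommin}(2) via hom-minimality (Proposition~\ref{prop:dualSnnSIX=pi1pi2}), reflexivity, and meet-irreducibility (Theorem~\ref{thm:MIJ_n^2}), exactly as the paper does; for $n=0$ both you and the paper compute $\Sub(\JB_0^2)=\{\Delta,S_{0,0},S\convsub{0,0},J_0^2\}$ from the meet-irreducibles. The only difference is that where the paper simply writes ``It follows that $S_{0,0}$ is absolutely unavoidable'', you supply the explicit test-algebra cardinality argument with $\S_{0,0}$---a perfectly valid way to fill in that step.
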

\begin{proof}
For $n\in \omega\setminus\{0\}$, this follows at once from
Proposition~\ref{lem:hommin} and Proposition~\ref{prop:dualSnnSIX=pi1pi2}.
It remains to prove that $S_{0,0}$ is absolutely unavoidable
within~$\mathcal{R}_{\JB_0}$.
This is an easy consequence of Theorem~\ref{thm:MI}. Indeed,
as case (d) of Theorem~\ref{thm:MI} does not apply to
$\JB_0$, the only meet-irreducibles in
the lattice $\mathcal{R}_{\JB_0}$ are $S_{0,0}$ and $S\convsub{0,0}$, whence $\mathcal{R}_{\JB_0} = \{\Delta, S_{0,0}, S\convsub{0,0}, J_0^2\}$, where $\Delta$ is the diagonal relation. It follows that $S_{0,0}$ is absolutely unavoidable
within $\mathcal{R}_{\JB_0}$.
\end{proof}

The hom-minimality and absolute unavoidability of the relation $S_{n,n}$ was proved in~\cite{C-thesis} via quite different proofs.

We now turn our attention to the hom-minimality of $R_{n, i, j}$. Our first lemma applies to every compatible binary relation on $\JB_n$ that contains
the pair $(\mbf_n, \mbf_0)$. The areas $A$, $B$, and $C$ referred to in the lemma are shown in Figure~\ref{fig:ABC}. For all $(a, b), (c, d)\in J_n^2$ with $(a, b)\le_\k (c, d)$ we denote the interval from $(a, b)$ to $ (c, d)$ in the knowledge order by $[(a, b), (c, d)]_\k$.

\begin{lem}\label{lem:hxnx0-implies-rho}
Let $n\in \omega\setminus\{0\}$. Assume that $S$ is a compatible binary relation on $\JB_n$ with $(\mbf_n, \mbf_0)\in S$, let $h\colon \mathbf{S}\to \JB_n$ be a homomorphism and assume that $h((\mbf_n, \mbf_0)) = \mbf_m$, with $0 \le m\le n$.
\begin{enumerate}[ \normalfont (a)]

\item \emph{Area A:} $h$ equals $\rho_1$ on $S\cap [(\mbf_m, \mbf_m), (\mbf_0, \mbf_0)]_\k$.

\item \emph{Area B:} $h$ equals $\rho_2$ on $S\cap [(\mbf_n, \mbf_n), (\mbf_m, \mbf_m)]_\k$.

\item \emph{Area C:} $h((\mbf_s, \mbf_t)) = \mbf_m$, for all $(\mbf_s, \mbf_t)\in S \cap [((\mbf_n, \mbf_m)), (\mbf_m, \mbf_0)]_\k$.

\item $h = \rho_1$ on $S \cap \bF^2$ if and only if $h((\mbf_n, \mbf_0)) = \mbf_n$, and $h = \rho_2$ on $S \cap \bF^2$ if and only if $h((\mbf_n, \mbf_0)) = \mbf_0$.

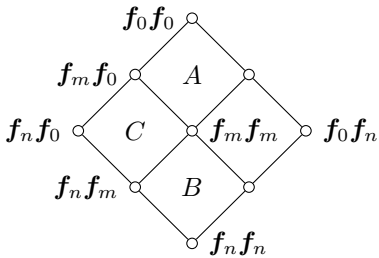
\begin{figure}[ht]
\centering
\begin{tikzpicture}[scale=0.75]
  \node[unshaded] (xnxn) at (0,0) {};
  \node[unshaded] (xmxn) at (1,1) {};
  \node[unshaded] (x0xn) at (2,2) {};
  \node[unshaded] (xnxm) at (-1,1) {};
  \node[unshaded] (xmxm) at (0,2) {};
  \node[unshaded] (x0xm) at (1,3) {};
  \node[unshaded] (xnx0) at (-2,2) {};
  \node[unshaded] (xmx0) at (-1,3) {};
  \node[unshaded] (x0x0) at (0,4) {};
  \draw[order] (xnxn) -- (xmxn) -- (x0xn);
  \draw[order] (xnxm) -- (xmxm) -- (x0xm);
  \draw[order] (xnx0) -- (xmx0) -- (x0x0);
  \draw[order] (xnxn) -- (xnxm) -- (xnx0);
  \draw[order] (xmxn) -- (xmxm) -- (xmx0);
  \draw[order] (x0xn) -- (x0xm) -- (x0x0);
  \node[label,anchor=west] at (xnxn) {$\mbf_n\mbf_n$};
  \node[label,anchor=west] at (x0xn) {$\mbf_0\mbf_n$};
  \node[label,anchor=east] at (xnxm) {$\mbf_n\mbf_m$};
  \node[label,anchor=west] at (xmxm) {$\mbf_m\mbf_m$};
  \node[label,anchor=east] at (xnx0) {$\mbf_n\mbf_0$};
  \node[label,anchor=east] at (xmx0) {$\mbf_m\mbf_0$};
  \node[label,anchor=east] at (x0x0) {$\mbf_0\mbf_0$};
  \node[label] at ($0.5*(xnxn)+0.5*(xmxm)$) {$B$};
  \node[label] at ($0.5*(xnx0)+0.5*(xmxm)$) {$C$};
  \node[label] at ($0.5*(x0x0)+0.5*(xmxm)$) {$A$};
\end{tikzpicture}
\caption{The areas $A$, $B$ and $C$.}\label{fig:ABC}
\end{figure}
\end{enumerate}
\end{lem}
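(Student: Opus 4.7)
The plan is to exploit the fact that every element of $\JB_n$ is a term-constant, so every subuniverse of $\JB_n^2$ contains the diagonal $\{(c,c)\mid c\in J_n\}$; hence $(\mbf_l,\mbf_l)\in S$ and $h(\mbf_l,\mbf_l)=\mbf_l$ for every $l\in\{0,\dots,n\}$. Since $\bFn$ is the chain $\mbf_0 >_\k \mbf_1 >_\k\cdots >_\k \mbf_n$, the operations $\oplus$ and $\otimes$ restricted to $\bFn$ act as $\min$ and $\max$ on indices.

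First I would compute the values of $h$ on the ``boundary'' of $S\cap\bF^2$. A coordinate-wise computation gives
\[
(\mbf_n,\mbf_0)\oplus(\mbf_l,\mbf_l)=(\mbf_l,\mbf_0),\qquad (\mbf_n,\mbf_0)\otimes(\mbf_l,\mbf_l)=(\mbf_n,\mbf_l),
\]
for every $0\le l\le n$, so applying $h$ and using $h(\mbf_n,\mbf_0)=\mbf_m$ yields the two boundary formulas $h(\mbf_l,\mbf_0)=\mbf_{\min(m,l)}$ and $h(\mbf_n,\mbf_l)=\mbf_{\max(m,l)}$.

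Next, for an arbitrary $(\mbf_k,\mbf_l)\in S\cap \bF^2$, the analogous identities $(\mbf_k,\mbf_l)\oplus(\mbf_n,\mbf_0)=(\mbf_k,\mbf_0)$ and $(\mbf_k,\mbf_l)\otimes(\mbf_n,\mbf_0)=(\mbf_n,\mbf_l)$ combine with the boundary formulas to give the pair of sandwich equations
\[
h(\mbf_k,\mbf_l)\oplus\mbf_m=\mbf_{\min(m,k)}\quad\text{and}\quad h(\mbf_k,\mbf_l)\otimes\mbf_m=\mbf_{\max(m,l)}.
\]
For Area~$A$ ($k,l\le m$) the right-hand sides are $\mbf_k$ and $\mbf_m$, sandwiching $\mbf_m\le_\k h(\mbf_k,\mbf_l)\le_\k \mbf_k$; since the interval $[\mbf_m,\mbf_k]_\k$ in $\JB_n$ is just the chain segment of $\bFn$, we can write $h(\mbf_k,\mbf_l)=\mbf_s$ for some $k\le s\le m$, and then $\mbf_s\oplus\mbf_m=\mbf_s=\mbf_k$ forces $s=k$. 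For Area~$B$ ($k,l\ge m$) the sandwich becomes $\mbf_l\le_\k h(\mbf_k,\mbf_l)\le_\k\mbf_m$ and the $\otimes$-equation analogously pins $h(\mbf_k,\mbf_l)=\mbf_l$. For Area~$C$ ($k\ge m\ge l$) both equations collapse to $h(\mbf_s,\mbf_t)\oplus\mbf_m=\mbf_m=h(\mbf_s,\mbf_t)\otimes\mbf_m$, immediately giving $h(\mbf_s,\mbf_t)=\mbf_m$.

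Part~(d) then follows immediately from (a)--(c): if $m=n$ then Area~$A$ covers all of $\bF^2$, while Areas~$B$ and~$C$ reduce to points of the form $(\mbf_n,\mbf_l)$ on which $\rho_1$ takes the constant value $\mbf_n=\mbf_m$, so $h=\rho_1$ on $S\cap\bF^2$; the converse is trivial since $\rho_1(\mbf_n,\mbf_0)=\mbf_n$, and the $\rho_2$ statement is entirely symmetric (corresponding to $m=0$). I expect no genuine obstacle; the only care required is the reversed index convention on $\bFn$, together with the observation that the sandwich forces $h(\mbf_k,\mbf_l)$ into the chain $\bFn$, after which a single chain identity closes each case.
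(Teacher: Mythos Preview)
Your proof is correct and follows essentially the same strategy as the paper: both exploit the identities obtained by taking $\oplus$ and $\otimes$ of $(\mbf_n,\mbf_0)$ with diagonal constants $(\mbf_l,\mbf_l)$ to pin down $h$ on each area. The only difference is organisational---you first derive the two universal ``sandwich'' equations $h(\mbf_k,\mbf_l)\oplus\mbf_m=\mbf_{\min(m,k)}$ and $h(\mbf_k,\mbf_l)\otimes\mbf_m=\mbf_{\max(m,l)}$ and then specialise, whereas the paper handles each area with a single tailored equation (e.g.\ for Area~A it applies $h$ directly to $(\mbf_n,\mbf_0)\oplus(\mbf_s,\mbf_t)=(\mbf_n,\mbf_0)\oplus(\mbf_s,\mbf_s)$ and uses $\mbf_m\le_\k h(\mbf_s,\mbf_t)$).
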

\begin{proof}
(a) Let $(\mbf_s, \mbf_t)\in S$ with $(\mbf_m, \mbf_m) \le_\k (\mbf_s, \mbf_t)$. Then
\[
(\mbf_n, \mbf_0) \oplus (\mbf_s, \mbf_t) = (\mbf_s, \mbf_0) = (\mbf_n, \mbf_0) \oplus (\mbf_s, \mbf_s).
\]
Applying $h$, we have
$\mbf_m \oplus h((\mbf_s, \mbf_t)) = \mbf_m \oplus \mbf_s$.
Lastly, $(\mbf_m, \mbf_m)\le_\k (\mbf_s, \mbf_t)$ gives us that $\mbf_m \le_\k h((\mbf_s, \mbf_t))$ and hence
$h((\mbf_s, \mbf_t)) = \mbf_s$.

(b) This follows from (a) by duality and symmetry.

(c) Let $(\mbf_s, \mbf_t)\in S$ with $(\mbf_n, \mbf_m)\le_\k (\mbf_s, \mbf_t)\le_\k (\mbf_m, \mbf_0)$.
It follows that
$\mbf_s \le_\k \mbf_m \le_\k \mbf_t$, and hence
\[
(\mbf_s, \mbf_t) = \big((\mbf_n, \mbf_0) \otimes (\mbf_t, \mbf_t)\big) \oplus (\mbf_s, \mbf_s).
\]
Consequently,
\[
h((\mbf_s, \mbf_t)) = (\mbf_m \otimes \mbf_t) \oplus \mbf_s = \mbf_m.
\]

(d) This follows immediately from (a) and (b).
\end{proof}

The next lemma gives restrictions on the image of the element $(\mbf_n,\mbf_0)$ (and therefore restrictions on the image of the element $(\mbt_n,\mbt_0)$) under a homomorphism
$h\colon \mathbf{S}\to \JB_n$ when $S_{n,i} \subseteq S$.
We will use the following observation. Let $n\in \omega\setminus\{0\}$ and let $0\le i <n$, then
\begin{equation}
\begin{aligned}
&\big(\forall s\in \{0, \dots, n-1\}\big)\big(\forall t\in \{0, \dots, i\}\big)\ (\mbf_s, \mbf_t)\in S_{n, i}, \quad\text{and}\quad \\
&\big(\forall s\in \{i+1, \dots, n\}\big)\big(\forall t\in \{0, \dots, n-1\}\big)\ (\mbf_s, \mbf_t)\in S_{n, i} .
\end{aligned}\tag{$*$}
\end{equation}

\begin{lem}\label{lem:hxnx0}
Let $n\in \omega\setminus\{0\}$ and let $0\le i <n$. Assume that $S$ is a compatible binary relation on $\JB_n$ with $S_{n,i} \subseteq S$ and let $h\colon \mathbf{S}\to \JB_n$ be a homomorphism. Then
$h((\mbf_n,\mbf_0)) \in \{ \mbf_0, \mbf_i, \mbf_{i+1}, \mbf_n\}$.
\end{lem}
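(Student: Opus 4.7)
The plan is to first show that $h((\mbf_n,\mbf_0))$ must equal some $\mbf_m$ with $0\le m\le n$, and then to exclude the two forbidden ranges $\{1,\dots,i-1\}$ and $\{i+2,\dots,n-1\}$ by a pair of symmetric arguments. Membership in $\bF$ is immediate: the pairs $(\mbf_n,\mbf_n)$ and $(\mbf_0,\mbf_0)$ are constants of $\mathbf{S}$ with $h$-images $\mbf_n$ and $\mbf_0$, and the sandwich $(\mbf_n,\mbf_n)\le_\k(\mbf_n,\mbf_0)\le_\k(\mbf_0,\mbf_0)$ combined with $h$'s preservation of $\le_\k$ (definable from $\otimes,\oplus$) forces $\mbf_n\le_\k h((\mbf_n,\mbf_0))\le_\k\mbf_0$; the $\le_\k$-interval from $\mbf_n$ to $\mbf_0$ in $J_n$ is precisely $\bF$.

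The heart of the argument is a pair of case-analyses, each relying on Lemma~\ref{lem:hxnx0-implies-rho} together with one clever witness pair. The general recipe is as follows: choose a witness $p\in S_{n,i}$ so that a truth-lattice combination of $p$ with $(\mbf_n,\mbf_0)$ lands inside one of the Areas~A, B, or C (pinning down $h(p)$ after solving a lattice equation in $\JB_n$), and so that the matching knowledge-lattice combination of $p$ with $(\mbf_n,\mbf_0)$ lands on a diagonal constant (whose image is known independently). Equating the two determinations then forces $m$ into the allowed set $\{0,i,i+1,n\}$.

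Case~1 ($1\le m\le i-1$): Take $p=(\mbf_0,\mbf_i)$, which lies in $S_{n,i}$ since $t=i\not\ge i+1$. The truth join $(\mbf_n,\mbf_0)\vee p=(\mbf_n,\mbf_i)$ lies in Area~B of Lemma~\ref{lem:hxnx0-implies-rho} (using $m\le i$), so its image is $\mbf_i$; solving $\mbf_m\vee h(p)=\mbf_i$ in $\JB_n$, and ruling out the $\mbt$-, $\top$- and $\bot$-valued solutions on truth-lattice grounds, yields $h(p)=\mbf_i$. The knowledge join $(\mbf_n,\mbf_0)\oplus p=(\mbf_0,\mbf_0)$ then forces $\mbf_0=\mbf_m\oplus\mbf_i=\mbf_m$, contradicting $m\ge1$. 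Case~2 ($i+2\le m\le n-1$) is dual: take $p=(\mbf_{i+1},\mbf_n)\in S_{n,i}$; the truth meet $(\mbf_n,\mbf_0)\wedge p=(\mbf_{i+1},\mbf_0)$ lies in Area~A (since $i+1\le m$), so its image is $\mbf_{i+1}$, and solving $\mbf_m\wedge h(p)=\mbf_{i+1}$ (using $m>i+1$) gives $h(p)=\mbf_{i+1}$. The knowledge meet $(\mbf_n,\mbf_0)\otimes p=(\mbf_n,\mbf_n)$ then forces $\mbf_n=\mbf_m\otimes\mbf_{i+1}=\mbf_m$, contradicting $m\le n-1$. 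The main obstacle is the discovery of these two witness pairs, which must simultaneously sit inside $S_{n,i}$, produce an element covered by Lemma~\ref{lem:hxnx0-implies-rho} under the truth operation, and produce a constant under the matching knowledge operation; once they are in hand, the remainder is routine bilattice arithmetic.
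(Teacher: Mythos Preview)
Your proof is correct, and it uses the same two witness pairs $(\mbf_0,\mbf_i)$ and $(\mbf_{i+1},\mbf_n)$ as the paper. The difference is one of efficiency. The paper never invokes Lemma~\ref{lem:hxnx0-implies-rho} and never touches the truth operations: from $(\mbf_n,\mbf_0)\oplus(\mbf_0,\mbf_i)=(\mbf_0,\mbf_0)$ it simply reads off $\mbf_m\oplus h((\mbf_0,\mbf_i))=\mbf_0$, and since $\mbf_0$ is the top of the chain $\bF$ this forces $\mbf_m=\mbf_0$ or $h((\mbf_0,\mbf_i))=\mbf_0$; in the latter case the inequality $(\mbf_n,\mbf_0)\otimes(\mbf_0,\mbf_i)=(\mbf_n,\mbf_i)\le_\k(\mbf_i,\mbf_i)$ gives $\mbf_m\le_\k\mbf_i$. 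The dual argument with $(\mbf_{i+1},\mbf_n)$ gives $\mbf_m=\mbf_n$ or $\mbf_m\ge_\k\mbf_{i+1}$, and combining the two disjunctions finishes. Your route instead splits on the value of $m$, uses a truth-lattice combination to land in Area~A or~B of Lemma~\ref{lem:hxnx0-implies-rho}, solves a lattice equation to determine $h(p)$ exactly, and only then applies the knowledge-lattice combination. This works, but the detour through Lemma~\ref{lem:hxnx0-implies-rho} and the truth operations is unnecessary: the paper's argument is self-contained and uniform (no case split on~$m$), at the cost of producing a disjunctive conclusion that must be combined at the end rather than an outright contradiction in each case.
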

\begin{proof}
Assume that $h((\mbf_n,\mbf_0)) = \mbf_m$, with $0 \le m\le n$. We shall prove that $\mbf_m\in \{ \mbf_0, \mbf_i, \mbf_{i+1}, \mbf_n\}$.

By $(*)$, we have $(\mbf_0, \mbf_i)\in S$. Applying $h$ to both sides of the equation $(\mbf_n, \mbf_0) \oplus (\mbf_0, \mbf_i) = (\mbf_0, \mbf_0)$ gives $\mbf_m \oplus h((\mbf_0, \mbf_i)) = \mbf_0$. Hence either $\mbf_m = \mbf_0$ or $h((\mbf_0, \mbf_i)) = \mbf_0$. If $h((\mbf_0, \mbf_i)) = \mbf_0$, then applying $h$ to the inequality $(\mbf_n, \mbf_0) \otimes (\mbf_0, \mbf_i) = (\mbf_n, \mbf_i) \le_\k (\mbf_i, \mbf_i)$ gives $\mbf_m \otimes \mbf_0 \le_\k \mbf_i$ and therefore $\mbf_m \le_\k \mbf_i$. We have proved that
\[
\mbf_m = \mbf_0 \quad\text{or}\quad  \mbf_m \le_\k \mbf_i.\tag{$\dagger_1$}
\]

By $(*)$, we have $(\mbf_{i+1}, \mbf_n)\in S$. Applying $h$ to both sides of the equation $(\mbf_n, \mbf_0) \otimes (\mbf_{i+1}, \mbf_n) = (\mbf_n, \mbf_n)$ gives $\mbf_m \otimes h((\mbf_{i+1}, \mbf_n)) = \mbf_n$. Hence either $\mbf_m = \mbf_n$ or $h((\mbf_{i+1}, \mbf_n)) = \mbf_n$. If $h((\mbf_{i+1}, \mbf_n)) = \mbf_n$, then applying $h$ to
$(\mbf_n, \mbf_0) \oplus (\mbf_{i+1}, \mbf_n) = (\mbf_{i+1}, \mbf_0) \ge_\k (\mbf_{i+1}, \mbf_{i+1})$ gives $\mbf_m \oplus \mbf_n \ge_\k \mbf_{i+1}$ and therefore $\mbf_m \ge_\k \mbf_{i+1}$. We have proved that
\[
\mbf_m = \mbf_n \quad\text{or}\quad  \mbf_m \ge_\k \mbf_{i+1}.\tag{$\dagger_2$}
\]
Combining $(\dagger_1)$ and $(\dagger_2)$ gives
\begin{multline*}
(\mbf_m = \mbf_0 \And \mbf_m = \mbf_n) \text{ or } (\mbf_m = \mbf_0 \And \mbf_m \ge_\k \mbf_{i+1})\\
 \text{ or } (\mbf_m \le_\k \mbf_i \And \mbf_m = \mbf_n)  \text{ or }  (\mbf_m \le_\k \mbf_i \And \mbf_m \ge_\k \mbf_{i+1}).
\end{multline*}
Since $\mbf_0 \ne \mbf_n$, we conclude that $\mbf_m = \mbf_0$ or $\mbf_m = \mbf_n$ or $\mbf_{i+1} \le_\k \mbf_m \le_\k \mbf_i$, as required.
\end{proof}

Applying Lemma~\ref{lem:hxnx0} to $S=R_{n,i,j}$ gives us the following result.

\begin{lem}\label{lem:hxnx0-on-Rnij}
Let $n\ge 3$ and $i,j \in \{0,\ldots,n-1\}$  with $i < j-1$ and let $h \colon \mathbf{R}_{n,i,j} \to \JB_n$ be a homomorphism. Then $h((\mbf_n,\mbf_0))\in\{\mbf_0,\mbf_n\}$.
\end{lem}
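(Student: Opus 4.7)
The plan is to apply Lemma~\ref{lem:hxnx0} twice, once with the parameter $i$ and once with the parameter $j$, and then intersect the two possible sets of values.

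First I would observe that, by the very definition in Subsection~\ref{subsec:Rnij}, we have $R_{n,i,j} = S_{n,i} \cup S_{n,j}$, so both inclusions $S_{n,i} \subseteq R_{n,i,j}$ and $S_{n,j} \subseteq R_{n,i,j}$ hold. In particular, $(\mbf_n,\mbf_0) \in S_{n,i} \subseteq R_{n,i,j}$, so the expression $h((\mbf_n,\mbf_0))$ makes sense and Lemma~\ref{lem:hxnx0} is applicable with $S = R_{n,i,j}$ for both choices of the parameter.

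Applying Lemma~\ref{lem:hxnx0} with the parameter $i$ yields
\[
h((\mbf_n,\mbf_0)) \in \{\mbf_0, \mbf_i, \mbf_{i+1}, \mbf_n\},
\]
and applying it with the parameter $j$ yields
\[
h((\mbf_n,\mbf_0)) \in \{\mbf_0, \mbf_j, \mbf_{j+1}, \mbf_n\}.
\]
Hence $h((\mbf_n,\mbf_0))$ lies in the intersection of these two sets.

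The final step is a trivial arithmetic observation: since $i < j-1$, we have $i+1 < j < j+1$, so the indices $i$ and $i+1$ are both strictly smaller than each of the indices $j$ and $j+1$. Consequently $\{i, i+1\} \cap \{j, j+1\} = \varnothing$, and the intersection of the two four-element sets above collapses to $\{\mbf_0, \mbf_n\}$, giving $h((\mbf_n,\mbf_0)) \in \{\mbf_0,\mbf_n\}$ as required. No real obstacle is anticipated here — the entire content of the lemma is extracting the elementary consequence of the strict gap $i < j-1$ from the previous lemma applied twice.
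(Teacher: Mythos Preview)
Your proposal is correct and follows essentially the same approach as the paper's own proof: apply Lemma~\ref{lem:hxnx0} twice using the inclusions $S_{n,i}\subseteq R_{n,i,j}$ and $S_{n,j}\subseteq R_{n,i,j}$, then intersect the resulting sets and use $i<j-1$ to eliminate the middle indices. The paper's argument is identical in substance, only slightly more terse.
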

\begin{proof}
Since $S_{n,i} \subseteq R_{n,i,j}$ and $S_{n,j} \subseteq R_{n,i,j}$, by Lemma~\ref{lem:hxnx0}  we have
\[
h((\mbf_n,\mbf_0))\in \{\mbf_0,\mbf_i,\mbf_{i+1},\mbf_n\} \cap \{\mbf_0,\mbf_j,\mbf_{j+1},\mbf_n\}.
\]
Since $0\leqslant i < j - 1$ we have $\{\mbf_0,\mbf_i,\mbf_{i+1},\mbf_n\} \cap \{\mbf_0,\mbf_j,\mbf_{j+1},\mbf_n\}=\{\mbf_0,\mbf_n\}$, as required.
\end{proof}

\begin{prop}\label{prop:Rnij-hom-min} Let $n \ge 3$ and let $i,j \in \{0,\ldots,n-1\}$ with $i < j-1$. Then the relation $R_{n,i,j}$
on $\JB_n$ is hom-minimal and therefore absolutely unavoidable within $\Sub(\JB_n^2)$.
\end{prop}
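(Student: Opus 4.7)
The plan is to show hom-minimality of $R_{n,i,j}$ by completely pinning down an arbitrary homomorphism $h \colon \mathbf{R}_{n,i,j} \to \JB_n$, and then to deduce absolute unavoidability by a direct appeal to Proposition~\ref{lem:hommin}(2). The overall strategy is essentially a two-line argument once the previous lemmas are in hand, with the real content already bottled in Lemmas~\ref{lem:hxnx0-implies-rho}, \ref{lem:hxnx0} and~\ref{lem:hxnx0-on-Rnij}.

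First I would pick an arbitrary homomorphism $h \colon \mathbf{R}_{n,i,j} \to \JB_n$ and apply Lemma~\ref{lem:hxnx0-on-Rnij} to conclude that the image $h((\mbf_n,\mbf_0))$ lies in $\{\mbf_0,\mbf_n\}$. Lemma~\ref{lem:hxnx0-implies-rho}(d) then immediately splits into two cases: either $h = \rho_1$ on $R_{n,i,j}\cap \bF^2$ or $h = \rho_2$ on $R_{n,i,j}\cap \bF^2$. Since $h$ preserves the involution $\neg$, and $\neg$ interchanges $R_{n,i,j} \cap \bF^2$ with $R_{n,i,j}\cap \bT^2$, the same identification propagates to the $\bT$-block. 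Finally, every element of $J_n$ is a constant in $\JB_n$, so $h$ is forced to send each diagonal constant pair $(c,c)$ to $c$; in particular $h((\top,\top)) = \top$ and $h((\bot,\bot)) = \bot$, which are the only elements of $R_{n,i,j}$ outside $\bF^2 \cup \bT^2$. Hence $h \in \{\rho_1,\rho_2\}$ and $R_{n,i,j}$ is hom-minimal.

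For absolute unavoidability, I would verify the hypotheses of Proposition~\ref{lem:hommin}(2). The relation $R_{n,i,j} = S_{n,i}\cup S_{n,j}$ contains the diagonal, because both $S_{n,i}$ and $S_{n,j}$ are reflexive quasi-orders; it is meet-irreducible in $\Sub(\JB_n^2)$ by Theorem~\ref{thm:MIJ_n^2} (this is precisely where the hypothesis $i < j-1$ matters, so that $R_{n,i,j}$ actually appears on the list of meet-irreducibles); and hom-minimality has just been established. Proposition~\ref{lem:hommin}(2) then yields that $R_{n,i,j}$ is absolutely unavoidable within $\Sub(\JB_n^2)$.

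I do not anticipate any serious obstacle here: the work has already been done in the preparatory lemmas, and the only thing to watch is bookkeeping, namely that after using Lemma~\ref{lem:hxnx0-implies-rho}(d) on the $\bF$-block one really does recover $h$ on all of $R_{n,i,j}$ (via $\neg$ on the $\bT$-block and via constants on $\{(\top,\top),(\bot,\bot)\}$).
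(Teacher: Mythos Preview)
Your proposal is correct and follows essentially the same approach as the paper: apply Lemma~\ref{lem:hxnx0-on-Rnij} and Lemma~\ref{lem:hxnx0-implies-rho}(d) to pin down $h$ on $R_{n,i,j}\cap\bF^2$, propagate to the $\bT$-block via $\neg$, handle $\{(\top,\top),(\bot,\bot)\}$ via constants, and then invoke Theorem~\ref{thm:MIJ_n^2} together with Proposition~\ref{lem:hommin}(2) for absolute unavoidability. Your write-up is in fact slightly more explicit than the paper's, since you spell out why $R_{n,i,j}$ contains the diagonal (reflexivity of $S_{n,i}$ and $S_{n,j}$), which the paper leaves implicit.
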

\begin{proof}
Let $h\colon \mathbf{R}_{n,i,j}\to \JB_n$ be a homomorphism.
Lemmas~\ref{lem:hxnx0-on-Rnij} and~\ref{lem:hxnx0-implies-rho}(4) imply that $h$ is a projection,
say $\rho_1$, when restricted to $R_{n,i,j} \cap \bF^2$. As $h$
preserves the negation $\neg$, it follows that $h$ equals $\rho_1$
when restricted to $R_{n,i,j} \cap \bT^2$. As $R_{n,i,j} \setminus
(\bF^2 \cup \bT^2) = \{(\top, \top), (\bot, \bot) \}$, we conclude
that $h = \rho_1$. Hence $R_{n,i,j}$ is hom-minimal and therefore
absolutely unavoidable within $\Sub(\JB_n^2)$ by
Theorem~\ref{thm:MIJ_n^2} and Proposition~\ref{lem:hommin}.
\end{proof}

This completes \emph{Step 2}. We may now complete the proof of Theorem~\ref{cor:bigduality}.

\begin{proof}[\textbf{Proof of Theorem~\ref{cor:bigduality}: optimality}]
By Proposition~\ref{prop:Sni-in-dual}, for $n \in \omega\setminus \{0\}$ and $0\le i \le n-1$, none of the relations $S_{n,i}$ can be removed from $\mathcal R_{(n)}$ without destroying the duality. By Proposition~\ref{prop:absunavoid},
the relation $S_{n,n}$ is absolutely unavoidable, for all $n\in\omega$, and hence cannot be removed from $\mathcal R_{(n)}$ without destroying the duality. Finally, by Proposition~\ref{prop:Rnij-hom-min}, for all $n \ge 3$ and all $i,j \in \{0,\ldots,n-1\}$ with $i < j-1$, the relation $R_{n,i,j}$ is absolutely unavoidable and so cannot be removed from $\mathcal R_{(n)}$ without destroying the duality. Hence the duality induced by $\JT_n$ is optimal.
\end{proof}

\section{The proof that $\protect\MT_n$ yields a multi-sorted duality on $\protect\CV_n$}\label{sec:multiproof}

By the Special Multi-sorted NU Strong Duality Theorem~\ref{MultiNUDT},
the structure
\[
\MT' = \langle M_0\du M_1\du \cdots \du M_n; \mathcal G, \mathcal R,  \T\rangle,
\]
where
\begin{itemize}

\item $\mathcal G = \bigcup \{\, \CA(\M_j, \M_k)\mid j, k\in\{0,1,\dots,n\}\,\}$ and

\item $ \mathcal R= \bigcup \{\, \Sub(\M_j\times\M_k)\mid j, k\in\{0,1,\dots,n\}\,\}$,
\end{itemize}
yields a multi-sorted strong duality on the variety~$\CV_n$. Our first step in refining this into a proof of Theorem~\ref{cor:bigmultiduality} is to describe the meet-irreducibles in the lattice $\Sub(\M_j\times \M_k)$, for $j, k\in \{0, \dots, n\}$ with $j \le k$.

We have already introduced the relations $\lek$ (and their converses $\gek$), for $k\in \{0, \dots, n\}$, and the relations~$\lejk$, for $j,k\in \{1, \dots, n\}$ with $j < k$.
In addition to these, we also require the compatible relations $S_\le$ and $S_\ge$ (from Subsection~\ref{subsec:S_le}) with $\A = \M_j$ and $\B = \M_k$, for $j = 0$ and $k\in \{1, \dots, n\}$, and for $j,k\in \{1, \dots, n\}$ with $j \le k$. We shall denote these multi-sorted relations from $\M_j$ to $\M_k$ by $S_\le^{jk}$ and $S_\ge^{jk}$:
\begin{itemize}
\item $S_\le^{0k} = \big(M_0\times \{\top\}\big)\cup \big(\{\bot\}\times M_k\big) \cup \{(\mbf, \mbf), (\mbf, \zero)\}\cup \{(\mbt, \mbt), (\mbt, \one)\}$ as a relation from $\M_0$ to $\M_k$, for $k\in \{1, \dots, n\}$,

\item $S_\ge^{0k} = \big(M_0\times \{\bot\}\big)\cup \big(\{\top\}\times M_k\big) \cup \{(\mbf, \mbf), (\mbf, \zero)\}\cup \{(\mbt, \mbt), (\mbt, \one)\}$ as a relation from $\M_0$ to $\M_k$, for $k\in \{1, \dots, n\}$,

\item $S_\le^{jk} = \big(M_j \times \{\top\}\big) \cup \big(\{\bot\} \times M_k\big) \cup \big(\{\mbf, \zero\}\times \{\mbf, \zero\}\big) \cup \big(\{\mbt, \one\}\times \{\mbt, \one\}\big)$
as a relation from $\M_j$ to $\M_k$, for $j, k\in \{1, \dots, n\}$ with $j < k$,

\item $S_\ge^{jk} = \big(M_j \times \{\bot\}\big) \cup \big(\{\top\} \times M_k\big) \cup \big(\{\mbf, \zero\}\times \{\mbf, \zero\}\big) \cup \big(\{\mbt, \one\}\times \{\mbt, \one\}\big)$
as a relation from $\M_j$ to $\M_k$, for $j, k\in \{1, \dots, n\}$ with $j < k$.

\end{itemize}

\begin{thm}\label{thm:MImulti}
Let $n\in \omega\comp\{0\}$.
\begin{enumerate}[\normalfont (1)]

\item The meet-irreducible elements of
$\Sub(\M_0\times \M_0)$ are $\lez$ and $\gez$.

\item For all $k\in \{1, \dots, n\}$, the meet-irreducible elements of
$\Sub(\M_k\times \M_k)$ are $\lek$, $\gek$, $S_\le^{kk}$ and $S_\ge^{kk}$.

\item For all $k\in \{1, \dots, n\}$, the meet-irreducible elements of $\Sub(\M_0\times \M_k)$ are $S_\le^{0k}$ and $S_\ge^{0k}$.

\item For all $j,k\in \{1, \dots, n\}$ with $j < k$, the meet-irreducible elements of $\Sub(\M_j\times \M_k)$ are $\lejk$, $S_\le^{jk}$ and $S_\ge^{jk}$.

\end{enumerate}
\end{thm}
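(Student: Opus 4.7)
The plan is to derive each of the four cases of Theorem~\ref{thm:MImulti} as a direct instance of Theorem~\ref{thm:MI} applied with $\A = \M_j$ and $\B = \M_k$. Fix the surjective homomorphisms $u \colon \JB_n \to \M_j$ and $v \colon \JB_n \to \M_k$ described in Section~\ref{sec:Jn}, and set $K = \{\,(u(c), v(c))\mid c\in J_n\,\}$ as before. By Theorem~\ref{thm:MI}, the meet-irreducibles of $\Sub(\M_j \times \M_k)$ are exactly $S_\le$, $S_\ge$ (from Subsection~\ref{subsec:S_le}) together with $S_{ab}$ for each $(a, b)\in (\bsF_{\M_j} \times \bsF_{\M_k}) \setminus K$. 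Hence the proof reduces to two mechanical tasks: list the pairs missing from $K$, and unfold the abstract relations into the concrete ones named in the statement.

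Parts (1) and (3) are almost immediate because $(\bsF_{\M_j} \times \bsF_{\M_k}) \setminus K = \varnothing$ in both cases. In (1), $\bsF_{\M_0} = \{\mbf\}$, so $(\mbf, \mbf)\in K$ exhausts the possibilities. In (3), $u(\bF) = \{\mbf\}$ and every element of $\bsF_{\M_k} = \{\mbf, \zero\}$ lies in $v(\bF)$, so again $\bsF_{\M_0}\times \bsF_{\M_k} \subseteq K$. In each case only $S_\le$ and $S_\ge$ survive, and comparing the definitions from Subsection~\ref{subsec:S_le} with those at the start of Section~\ref{sec:CVduality} matches them to $\lez, \gez$ and to $S_\le^{0k}, S_\ge^{0k}$, respectively.

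For (2) and (4) one or two further relations $S_{ab}$ must be computed. In (2), $(\bsF_{\M_k})^2 \setminus K = \{(\mbf, \zero), (\zero, \mbf)\}$. Since $u^{-1}(\mbf) = \{\mbf_k, \dots, \mbf_n\}$ sits strictly below $u^{-1}(\zero) = \{\mbf_0, \dots, \mbf_{k-1}\}$ in $\bFn$, Lemma~\ref{lem:abNotInD} places $(\mbf, \zero)$ in case (du) and $(\zero, \mbf)$ in case (ud). Plugging $\down_{\bFA}\mbf = \{\mbf\}$, $\up_{\bFA}\zero = \{\zero\}$, and their $\neg$-images $\down_{\bTA}\mbt = \{\mbt\}$, $\up_{\bTA}\one = \{\one\}$ into the formulas of Subsection~\ref{subsec:Sab}, the sets $\down a \times \up b$ and $\up a \times \down b$ each collapse to a single pair; removing that pair from $\bsF^2$ and its $\neg$-image from $\bsT^2$ yields $S_{(\mbf, \zero)} = \gek$ and $S_{(\zero, \mbf)} = \lek$. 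Together with $S_\le = S_\le^{kk}$ and $S_\ge = S_\ge^{kk}$ this accounts for all four meet-irreducibles. In (4) the same accounting gives $K \cap (\bsF_{\M_j} \times \bsF_{\M_k}) = \{(\zero, \zero), (\mbf, \zero), (\mbf, \mbf)\}$, leaving only $(\zero, \mbf)$; the strict inequality $j < k$ forces $\min_\k u^{-1}(\zero) = \mbf_{j-1} >_\k \mbf_k = \max_\k v^{-1}(\mbf)$, putting this pair in case (ud), and the same kind of unfolding identifies $S_{(\zero, \mbf)}$ with the stretched order $\lejk$.

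There is no real obstacle once Theorem~\ref{thm:MI} and the explicit descriptions of $S_\le$, $S_\ge$ and $S_{ab}$ are in hand: each assertion becomes a finite verification on the (at most) two-element chains $\bsF_{\M_k}$ and $\bsT_{\M_k}$. The only thing needing care is keeping track of which case (du) or (ud) applies to each $(a, b)\in (\bsF_{\M_j}\times \bsF_{\M_k})\setminus K$, but this is dictated automatically by whether the index of $a$ in its block is smaller or larger than that of $b$.
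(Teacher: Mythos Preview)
Your proposal is correct and follows essentially the same route as the paper's proof: both specialise Theorem~\ref{thm:MI} to $\A=\M_j$, $\B=\M_k$, compute the set $K$ of constants in each case, and then identify $S_\le$, $S_\ge$ and the surviving $S_{ab}$ with the named relations. Your write-up is in fact slightly more explicit than the paper's in checking the (du)/(ud) alternative and unfolding the formulas for $S_{ab}$, but the argument is the same.
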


\begin{proof}
Let $j,k\in \{0, \dots, n\}$ with $j \le k$, and let $u\colon \JB_n \to \M_j$ and $v\colon \JB_n \to \M_k$ be the unique homomorphisms. Hence, $u$ maps $\mbf_0,\ldots,\mbf_{j-1}$ to $\zero$ and maps $\mbf_j, \ldots, \mbf_n$ to~$\mbf$, and $v$ maps $\mbf_0,\ldots,\mbf_{k-1}$ to $\zero$ and maps $\mbf_k, \ldots, \mbf_n$ to~$\mbf$ (and similarly for the `true' constants).

By Theorem~\ref{thm:MI}, the meet-irreducibles in $\Sub(\M_j \times \M_k)$ are the appropriate versions of $S_\le$, $S_\ge$ and $S_{ab}$, for $(a, b) \in \big(\bsF^{\M_j} \times \bsF^{\M_k}\big)\comp K$.
Inspection shows that $S_\le$ and $S_\ge$ yield the relations $\lez$ and~$\gez$, when $j = k = 0$, and yield $S_\le^{jk}$ and $S_\ge^{jk}$ otherwise. It remains to calculate the relations~$S_{ab}$, for $(a, b) \in \big(\bsF^{\M_j} \times \bsF^{\M_k}\big)\comp K$.
Since $\big(\bsF^{\M_0} \times \bsF^{\M_k}\big)\comp K = \varnothing$, for all $k\in \{0, \dots, n\}$, we must calculate~$S_{ab}$, for $(a, b) \in \big(\bsF^{\M_j} \times \bsF^{\M_k}\big)\comp K$, with $0 <j \le k$. We need to distinguish two cases: $j < k$ and $j = k$.

First consider the case where $j < k$. We then have
\begin{align*}
K &= \{\, (u(c), v(c))\mid c\in J_n\,\} \\
&= \{(\top, \top), (\zero, \zero), (\mbf,\zero), (\mbf, \mbf), (\one, \one), (\mbt,\one), (\mbt, \mbt), (\bot, \bot)\}.
\end{align*}
Thus, $\big(\bsF^{\M_j} \times \bsF^{\M_k}\big)\comp K= \{(\zero,\mbf)\}$, whence $S_{\zero\mbf} = {\lejk}$ is the only meet-irreducible of the form $S_{ab}$ that occurs in this case.

Now consider the case where $j = k$. We then have
\[
K = \{\, (u(c), v(c))\mid c\in J_n\,\} =
\{(\top, \top), (\zero, \zero), (\mbf, \mbf), (\one, \one), (\mbt, \mbt), (\bot, \bot)\}.
\]
Thus, $\big(\bsF^{\M_k} \times \bsF^{\M_k}\big)\comp K= \{(\zero, \mbf), (\mbf, \zero)\}$, whence
$S_{\zero\mbf} = {\lek}$ and $S_{\mbf\zero} ={\gek}$ (and no others) occur as meet-irreducibles of the form $S_{ab}$ in this case.
\end{proof}

We are now ready to prove the duality statement in Theorem~\ref{cor:bigmultiduality}. We will need a multi-sorted generalisation of an entailment construct known as \emph{action by an endomorphism}---see \cite[2.4.5(15)]{CD98}. If $A$, $B$, $C$ and $D$ are sorts, $g\colon A \to C$, $h\colon B\to D$ and $S\subseteq C\times D$, then define
\[
(g,h)^{-1}(S) := \{\, (a, b)\in A\times B \mid (g(a), h(b))\in S\,\}.
\]
A simple calculation shows that $\{g, h, S\}\vdash (g,h)^{-1}(S)$.

\begin{proof}[\textbf{Proof of Theorem~\ref{cor:bigmultiduality}: duality}]
As already observed, the Special Multi-sorted NU Strong Duality Theorem~\ref{MultiNUDT} implies that the alter ego $\MT'$
yields a multi-sorted duality on the variety~$\CV_n$. Since each relation $R$ from $\M_j$ to $\M_k$ entails its converse $R\conv$ from $\M_k$ to~$\M_j$, it suffices to restrict to relations in $\Sub(\M_j \times \M_k)$, for $j \le k$. As each set of relations in $\Sub(\M_j \times \M_k)$ entails its intersection, we can further restrict to the meet-irreducibles in $\Sub(\M_j \times \M_k)$. By comparing the relations and maps in $\mathcal{S}_{(n)}\cup \mathcal G_{(n)}$ with the meet-irreducibles listed in Theorem~\ref{thm:MImulti}, we see that it remains to show that $\mathcal{S}_{(n)}\cup \mathcal G_{(n)}$ entails the following relations
\[
\gez \,  \text{ and } \, \gek, \ S_\le^{kk}, \ S_\ge^{kk}, \ S_\le^{0k}, \ S_\ge^{0k}, \ S_\le^{jk},  S_\ge^{jk}, \text{ for  $j,k\in \{0, \dots, n\}$ with $j < k$}.
\]
Since ${\lek} \in \mathcal{S}_{(n)}$ and $\gek$ is the converse of $\lek$, it is clear that $\mathcal{S}_{(n)}\cup \mathcal G_{(n)}$ entails~$\gek$, for all $k\in \{0, \dots, n\}$. We now turn to the relations of the form $S_\le^{jk}$ or $S_\ge^{jk}$.
By Lemma~\ref{lem:Sle=Sgle}, each of these relations is of the form $(g_j,g_k)^{-1}(\lez)$ or $(g_j,g_k)^{-1}(\gez)$, for some $j,k\in \{0, \dots, n\}$, and hence is entailed by $\mathcal{S}_{(n)}\cup \mathcal G_{(n)}$.
This completes the proof that $\MT_n$ yields a duality on the variety~$\CV_n$.

Finally, to show that the duality is strong we need to compare
\[
\mathcal G = \bigcup \{\, \CA(\M_j, \M_k)\mid j, k\in\{0,1,\dots,n\}\,\}
\]
with the set
$\mathcal G_{(n)}$. The values of the constants in each of the algebras $\M_k$, for $k\in \{0, \dots, n\}$, guarantee that,
for all $j, k\in \{0, \dots, n\}$, the only homomorphisms $u\colon \M_k \to \M_j$ are the identity maps $\id_{M_k}$ along with the maps $g_k\colon \M_k \to \M_0$. Since the identity maps can be removed from any alter ego without destroying a strong duality, we are done.
\end{proof}

\section{Proving that the duality on $\CV_n$ given by $\MT_n$ is optimal}\label{sec:optmulti}

In this section we shall prove that, for all $n\in \omega\comp\{0\}$, the multi-sorted duality for  the variety $\CV_n$ given in Theorem~\ref{cor:bigmultiduality} is optimal, that is, none of the
operations and relations in
\begin{align*}
\mathcal{G}_{(n)} &= \big\{\,g_k \mid k\in \{1, \dots, n\}\,\big\}\quad\text{and}\\
\mathcal{S}_{(n)} &=\{\,{\lek} \mid k\in \{0, \dots, n\}\,\} \cup \big\{\,{\lejk} \mid j, k\in \{1, \dots,n\} \text{ with } j < k\,\big\} \end{align*}
can be removed from the alter ego $\MT_n = \langle M_0\du M_1\du \cdots \du M_n; \mathcal{G}_{(n)}, \mathcal{S}_{(n)}, \T\rangle$
without destroying the duality.

Recall that, for every algebra $\B$ in $\CV_n$, the underlying set of the multi-sorted dual of $\B$ is given by
\[
\mathrm{D}(\B) = \CV_n(\B, \M_0) \du \CV_n(\B, \M_1) \du \dots\du \CV_n(\B, \M_n).
\]
We shall see that if $\B$ is finite, then the sorts $\CV_n(\B, \M_k)$ of $\mathrm{D}(\B)$ have a very simple structure.
We need the following special case of J\'onsson's Lemma~\cite[Lemma 3.1]{Jon}.

\begin{lem}[J\'onsson]\label{lem:Jon}
Let $\B$ be a subalgebra of\/
$\prod_{i\in I} \A_i$
with $I$ finite, let $\C$ be subdirectly irreducible and let $u\colon \B \to \C$ be a surjective homomorphism.
Then $u = g \circ \pi_i\rest B$, for some $i\in I$ and some homomorphism $g \colon \pi_i(\B) \to \C$.
\end{lem}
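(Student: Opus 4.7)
Since the statement is quoted from J\'onsson's original paper, the quickest route is to cite \cite[Lemma 3.1]{Jon}; however, the argument is short enough to sketch directly. The plan is to reduce the factorisation to a distributive-lattice computation inside $\Con(\B)$, exploiting the fact that the algebras to which this lemma is applied in this paper are lattice-based, so that $\Con(\B)$ is distributive.

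First I would name the relevant congruences: let $\theta := \ker u$ and $\theta_i := \ker(\pi_i\rest B)$, for each $i\in I$. Because $\B$ embeds into $\prod_{i\in I}\A_i$, the projections separate the points of $\B$, and hence $\bigcap_{i\in I}\theta_i = \Delta_\B$. Since $u$ is surjective and $\C$ is subdirectly irreducible, the quotient $\B/\theta \cong \C$ is subdirectly irreducible, so $\theta$ is meet-irreducible in $\Con(\B)$.

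Next I would invoke the distributivity of $\Con(\B)$: in any distributive lattice, a meet-irreducible element is meet-prime over finite meets, since from $\alpha\wedge\beta \le \theta$ the identity $\theta = (\alpha\vee\theta)\wedge(\beta\vee\theta)$ forces one of the two join-factors to equal $\theta$, and an easy induction on $|I|$ then deduces, from $\bigcap_{i\in I}\theta_i \subseteq \theta$, that $\theta_{i_0}\subseteq \theta$ for some $i_0\in I$. The inclusion $\theta_{i_0}\subseteq\theta$ says precisely that $u$ is constant on every $\theta_{i_0}$-block, so the rule $g(\pi_{i_0}(b)) := u(b)$ defines a well-defined map $g\colon \pi_{i_0}(\B)\to \C$; the Homomorphism Theorem applied to the surjection $\pi_{i_0}\rest B\colon \B\to \pi_{i_0}(\B)$ shows that $g$ is a homomorphism, and by construction $u = g\circ (\pi_{i_0}\rest B)$.

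The main (mild) obstacle is the meet-prime step for finite meets inside $\Con(\B)$; once congruence distributivity is in hand, the rest is routine bookkeeping with kernels.
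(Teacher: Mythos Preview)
The paper does not prove this lemma at all: it is stated with an attribution to J\'onsson and a citation to \cite[Lemma 3.1]{Jon}, and nothing more. Your sketch is the standard congruence-lattice argument and is correct. You were right to flag that congruence distributivity is doing the real work (meet-irreducible $\Rightarrow$ meet-prime for finite meets), and that the paper's statement leaves this hypothesis implicit; without it the conclusion can fail (e.g.\ the sum map $\mathbb Z_2\times\mathbb Z_2\to\mathbb Z_2$). In the context of the paper every algebra in sight has a lattice reduct, so $\Con(\B)$ is distributive and your argument goes through.
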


Recall that, for $j, k\in \{1, \dots, n\}$, the only endomorphism of $\M_k$ is $\id_M$, there are no homomorphisms from $\M_j$ to $\M_k$ when $j \ne k$, and the only homomorphism from $\M_k$ to $\M_0$ is~$g_k$.

\begin{lem}\label{lem:Jonsson}
Let $\B$ be a subalgebra of\/
 $\prod_{i\in I} \A_i$ with the set $I$ finite and $\A_i\in \{\M_0, \dots, \M_n\}$, for all $i\in I$. Then, for all $k \in \{0, \dots, n\}$, every homomorphism from $\B$ to $\M_k$ is the restriction of a projection or,
when $k = 0$, is the restriction of a projection followed by one of the homomorphisms in $\mathcal{G}_{(n)}$.
\end{lem}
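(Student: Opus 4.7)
The plan is a direct application of J\'onsson's Lemma (Lemma~\ref{lem:Jon}) together with two elementary observations about the algebras $\M_0,\dots,\M_n$. Fix $k\in\{0,\dots,n\}$ and a homomorphism $u\colon \B \to \M_k$. First I would note that, since every element of each $\M_\ell$ is a constant, the algebras $\M_0,\dots,\M_n$ have no proper subalgebras whatsoever. Hence the image $u(\B)\le \M_k$ equals $\M_k$, so $u$ is surjective.

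Next I would invoke subdirect irreducibility of $\M_k$ (Proposition~\ref{prop:thevariety}) and apply J\'onsson's Lemma in the form of Lemma~\ref{lem:Jon}, which yields $i\in I$ and a homomorphism $g\colon \pi_i(\B)\to \M_k$ with $u = g\circ (\pi_i\rest B)$. Applying the no-proper-subalgebras observation a second time to $\pi_i(\B)\le \A_i$ gives $\pi_i(\B)=\A_i$, so $g$ is in fact a homomorphism between two members of $\{\M_0,\dots,\M_n\}$.

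The final step is to use the classification of such homomorphisms, recalled in the paragraph immediately before the lemma: the only endomorphism of each $\M_\ell$ is the identity; for distinct $j,\ell\in\{1,\dots,n\}$ there is no homomorphism $\M_j\to\M_\ell$; and the only homomorphism $\M_j\to\M_0$ for $j\in\{1,\dots,n\}$ is $g_j$. Writing $\A_i = \M_j$, this leaves three possibilities: (a) $j=k$, in which case $g = \id_{M_k}$ and $u=\pi_i\rest B$ is a restriction of a projection; (b) $k=0$ and $j=0$, in which case again $g=\id_{M_0}$ and $u=\pi_i\rest B$; or (c) $k=0$ and $j\ge 1$, in which case $g = g_j\in \mathcal{G}_{(n)}$ and $u = g_j\circ(\pi_i\rest B)$ is a projection followed by a map from $\mathcal{G}_{(n)}$. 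All cases match the conclusion of the lemma. There is no real obstacle here: J\'onsson's Lemma does the structural work, and the constant-rich signature does the rest, so the proof amounts to stringing these observations together cleanly.
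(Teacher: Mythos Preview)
Your proposal is correct and follows essentially the same route as the paper's proof: surjectivity of $u$ and of $\pi_i\rest B$ from the constant-rich signature, then J\'onsson's Lemma~\ref{lem:Jon}, then the classification of homomorphisms among the $\M_\ell$. The only cosmetic difference is that the paper absorbs your cases (a) and (b) into the single observation ``if $k\ne 0$ then $\A_i=\M_k$ and $g=\id_{M_k}$''; in both versions the case $\A_i=\M_0$, $k\ge 1$ is ruled out implicitly by the constants (since $\mbf_0$ and $\mbf_k$ coincide in $\M_0$ but differ in $\M_k$).
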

\begin{proof}
Let $k\in \{0, \dots, n\}$ and let $u\colon \B \to \M_k$ be a homomorphism. Since every element of $\M_k$ is the value of a constant, the map $u$  is surjective. Similarly, the restricted projection $\pi_i\rest B \colon \B \to \A_i$ is surjective. Since $\M_k$ is subdirectly irreducible, it follows from Lemma~\ref{lem:Jon} that $u = g \circ \pi_i\rest B$, for some $i\in I$ and some homomorphism $g \colon \A_i \to \M_k$. If $k \ne 0$, then we must have $\A_i = \M_k$ and $g = \id_{M_k}$, whence $u = \pi_i\rest B$. If $k = 0$, then either $\A_i = \M_0$, in which case $g = \id_M$ and hence $u = \pi_i\rest B$, or $\A_i = \M_k$, for some $k \ne 0$, in which case $g = g_k$ and so $u = g_k \circ \pi_i\rest B$, as claimed.
\end{proof}

Let $\CM$ be a finite set of finite algebras. By analogy with the single-sorted situation, a compatible multi-sorted binary relation $R$ on $\CM$ is \emph{absolutely unavoidable} within the set $\mathcal{R}_{\CM}$ of all compatible multi-sorted binary relations on~$\CM$
if, for every subset $\mathcal{R}$ of $\mathcal{R}_{\CM}$ such that $\MT = \langle \bigdu\{\, M\mid \M\in \CM\,\}; \mathcal{R}, \T\rangle$ yields a multi-sorted duality on $\ISP(\CM)$, we have $\mathcal{R}\cap \{R, R\conv\,\} \neq \varnothing$.

\begin{prop}\label{lem:lez}
Let $\CM = \{\M_0, \dots, \M_n\}$.
The relation $\lez$ is absolutely unavoidable within $\mathcal{R}_{\CM}$.
\end{prop}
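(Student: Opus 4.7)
The plan is to apply the multi-sorted analogue of the Test Algebra Lemma~\cite[8.1.3]{CD98} to the subalgebra $\mathbf{L}:=\mathbf{S}_{\lez}$ of $\M_0\times\M_0$ whose underlying set is the relation $\lez$, and to exhibit a morphism on $\mathrm{D}(\mathbf{L})$ that is not in the image of $e_{\mathbf{L}}$ but preserves every relation in $\mathcal{R}_{\CM}\setminus\{\lez,\gez\}$ together with every homomorphism in $\mathcal{G}$. The first step is to compute $\mathrm{D}(\mathbf{L})$. Since $\id_{M_0}$ is the only endomorphism of $\M_0$ and there is no homomorphism from $\M_0$ into any $\M_k$ with $k\ge 1$ (the constants $\mbf_0$ and $\mbf_k$ force incompatible images), Lemma~\ref{lem:Jonsson} yields $\CV_n(\mathbf{L},\M_0)=\{\rho_1,\rho_2\}$ and $\CV_n(\mathbf{L},\M_k)=\varnothing$ for all $k\ge 1$. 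Thus $\mathrm{D}(\mathbf{L})$ has a single non-empty sort, containing only the two restricted projections.

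Next I define a sort-preserving map $\gamma\colon \mathrm{D}(\mathbf{L})\to M_0\du M_1\du\cdots\du M_n$ by $\gamma(\rho_1):=\mbf$ and $\gamma(\rho_2):=\mbt$. Any element $a\in L$ with $e_{\mathbf{L}}(a)=\gamma$ would satisfy $\rho_i(a)=\gamma(\rho_i)$ for $i=1,2$, forcing $a=(\mbf,\mbt)$; but $(\mbf,\mbt)\notin \lez=L$, so $\gamma$ is not of the form $e_{\mathbf{L}}(a)$ for any $a\in L$.

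It remains to verify that $\gamma$ preserves every operation in $\mathcal{G}$ and every relation in $\mathcal{R}_{\CM}\setminus\{\lez,\gez\}$. Every $g_k$ with $k\ge 1$ and every relation between $\M_j$ and $\M_k$ with $(j,k)\ne(0,0)$ is preserved vacuously, because at least one of the relevant sorts of $\mathrm{D}(\mathbf{L})$ is empty. This reduces the problem to compatible binary relations on~$\M_0$. By Theorem~\ref{thm:MImulti}(1) the only meet-irreducibles of $\Sub(\M_0\times\M_0)$ are $\lez$ and $\gez$, so
\[
\Sub(\M_0\times\M_0)=\{\Delta_{M_0},\ \lez,\ \gez,\ M_0\times M_0\};
\]
of these, only $\Delta_{M_0}$ and $M_0\times M_0$ need be preserved by $\gamma$, and both checks are immediate. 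Consequently $\gamma$ is a morphism of the alter ego obtained by deleting $\lez$ and $\gez$, yet lies outside the image of~$e_{\mathbf{L}}$; this alter ego therefore does not yield a duality on~$\CV_n$, and the same conclusion holds for any $\mathcal{R}\subseteq\mathcal{R}_{\CM}\setminus\{\lez,\gez\}$. The only non-routine point is the structural collapse of the higher sorts of $\mathrm{D}(\mathbf{L})$, which is what reduces an ostensibly large case analysis on all multi-sorted relations to a trivial one on $\M_0\times\M_0$.
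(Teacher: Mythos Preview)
Your proof is correct and follows essentially the same approach as the paper's: both use the test algebra $\mathbf{L}=\bm{\leqslant}^0$, invoke Lemma~\ref{lem:Jonsson} to see that only the $M_0$-sort of $\mathrm{D}(\mathbf{L})$ is non-empty, and then reduce the question to the four-element lattice $\Sub(\M_0^2)=\{\Delta,\lez,\gez,M_0^2\}$. The only difference is presentational: the paper argues abstractly that any dualising $\mathcal{R}$ must contain a non-trivial relation on $\M_0$, whereas you construct an explicit witness $\gamma$ with $\gamma(\rho_1)=\mbf$, $\gamma(\rho_2)=\mbt$ and check directly that it preserves $\Delta$ and $M_0^2$ but not $\lez$ or $\gez$; your version is slightly more self-contained at this final step.
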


\begin{proof}
Let $\bm{\leqslant}^0$ denote the algebra with underlying set $\lez$ and  let $\mathcal R$ be any set of compatible multi-sorted binary relations on $\CM$ that yields a duality on $\CV_n$, and therefore yields a duality on the algebra~$\bm{\leqslant}^0$. By Lemma~\ref{lem:Jonsson}, every sort of $\mathrm{D}(\bm{\leqslant}^0)$, other than the $M_0$-sort
is empty. It follows that if $R$ is a compatible multi-sorted binary relation from $\M_j$ to $\M_k$, with at least one of $j$ and $k$ not equal to~$0$, then $R^{\mathrm{D}(\bm{\leqslant}^0)} = \varnothing$. Hence $\mathcal R$ must include a binary relation on~$\M_0$. Since the only subuniverses of $\M_0^2$ are $\lez$, $\gez$ and the trivial relations $\Delta$ and $M_0^2$, it follows that $\mathcal R$ must include $\lez$ or $\gez$, that is, $\lez$ is absolutely unavoidable within $\mathcal{R}_{\CM}$.
\end{proof}

\begin{thm}
The duality on the variety $\CV_n$ yielded by the multi-sorted alter ego $\MT_n$ is optimal.
\end{thm}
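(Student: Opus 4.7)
The plan is to handle each of the $\tfrac{1}{2}(n^2 + 3n + 2)$ items in $\mathcal{S}_{(n)} \cup \mathcal{G}_{(n)}$ in turn, showing that its removal destroys the duality. The item $\lez$ is already taken care of by Proposition~\ref{lem:lez}. For each remaining item $\alpha$ I will exhibit a finite test algebra $\B_\alpha \in \CV_n$ together with a continuous map $\gamma_\alpha \colon \mathrm{D}(\B_\alpha) \to M_0 \du \cdots \du M_n$ that preserves sorts and preserves every operation and relation in $(\mathcal{G}_{(n)} \cup \mathcal{S}_{(n)}) \setminus \{\alpha\}$ but does not lie in the image of the canonical evaluation map~$e_{\B_\alpha}$; this contradicts the would-be duality by the multi-sorted analogue of the Test Algebra Lemma.

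The test algebras are chosen in the obvious way: for a relation $R$ from $\M_j$ to $\M_k$ one takes the subalgebra $\mathbf R$ of $\M_j \times \M_k$ with underlying set~$R$, and for the operation $g_k$ one takes $\M_k$ itself. So for $\lek$ with $k \in \{1,\dots,n\}$ I use $\B = \bm{\leqslant}^k$, for $\lejk$ with $1 \le j < k \le n$ I use $\B = \bm{\leqslant}^{jk}$, and for $g_k$ with $k \in \{1,\dots,n\}$ I use $\B = \M_k$. The decisive preparatory step is to pin down $\mathrm{D}(\B)$ in each case by applying Lemma~\ref{lem:Jonsson}: since $\B$ sits as a subalgebra of a small product of subdirectly irreducibles, every homomorphism from $\B$ to $\M_l$ with $l \ne 0$ is a restriction of a projection (and is nonzero only when $\M_l$ appears as a factor), while every homomorphism from $\B$ to $\M_0$ is a $g_l$-composite of such a projection. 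One sees immediately that at most three sorts of $\mathrm{D}(\B)$ are non-empty and that each is a small finite set; moreover, for $\B \in \{\bm{\leqslant}^{k},\bm{\leqslant}^{jk}\}$ the identity $g_j \circ \pi_1 = g_k \circ \pi_2$ holds on $\B$ by direct inspection of the underlying set, so the $M_0$-sort of the dual collapses to a singleton.

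With the duals so tightly controlled, the construction of $\gamma_\alpha$ is entirely elementary. For $\lek$ set $\gamma(\pi_1) = \zero$ and $\gamma(\pi_2) = \mbf$ in the $M_k$-sort; for $\lejk$ do the analogous thing with $\pi_1$ living in the $M_j$-sort and $\pi_2$ in the $M_k$-sort; and for $g_k$ set $\gamma(\id_{M_k}) = \mbf$ in the $M_k$-sort and $\gamma(g_k) = \mbt$ in the $M_0$-sort. In the first two cases the value of $\gamma$ on the (singleton) $M_0$-sort is forced by preservation of the remaining operations in $\mathcal{G}_{(n)}$, and it is well-defined precisely because $g_j \circ \pi_1 = g_k \circ \pi_2$. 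Preservation of each remaining relation then reduces to a one-line check: whenever the relation acts on a non-empty sort, the images chosen above lie on the reflexive diagonal of that relation; whenever the relation is cross-sort, one of the sorts involved is empty and the check is vacuous.

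The only real obstacle is the combinatorial bookkeeping of multi-sorted preservation: we must be sure that the candidate $\gamma$ does not inadvertently violate some $\leqslant^{lm}$ with $\{l,m\}$ distinct from the target, or some residual operation $g_l$. This is exactly where Lemma~\ref{lem:Jonsson}'s sharp description of $\mathrm{D}(\B)$ earns its keep, since in each such case at least one of the sorts involved is empty. The concluding observation that $\gamma \ne e_\B(b)$ for every $b \in B$ is simply the targeted violation read back: for the relation cases, $(\pi_1(b), \pi_2(b)) = b$ lies in the relation whereas $(\gamma(\pi_1), \gamma(\pi_2))$ does not, and for $g_k$ the map $e_{\M_k}(b)$ satisfies $g_k\bigl(e_{\M_k}(b)(\id_{M_k})\bigr) = e_{\M_k}(b)(g_k)$ tautologically, whereas by construction $g_k(\gamma(\id_{M_k})) = \mbf \ne \mbt = \gamma(g_k)$.
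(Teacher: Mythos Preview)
Your proof is correct and follows essentially the same approach as the paper: the same test algebras $\bm{\leqslant}^k$, $\bm{\leqslant}^{jk}$ and $\M_k$ are used, Lemma~\ref{lem:Jonsson} is invoked to pin down their duals, and the witness maps $\gamma$ are defined in the same way (up to an immaterial swap of $\mbf$ and $\mbt$ in the $g_k$ case). Your explicit observation that $g_j\circ\pi_1 = g_k\circ\pi_2$ on $\bm{\leqslant}^k$ and $\bm{\leqslant}^{jk}$, so that the $M_0$-sort of the dual is a singleton, is in fact a detail the paper leaves implicit (it lists $\{g_j\circ\rho_1, g_k\circ\rho_2\}$ and then assigns both the same value), so your write-up is if anything slightly more careful on this point.
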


\begin{proof}
By Proposition~\ref{lem:lez}, it remains to show that none of the relations in $\mathcal{S}_{(n)}\comp\{\lez\}$ and operations in $\mathcal{G}_{(n)}$
can be removed from the alter ego $\MT_n = \langle
M_0\du M_1\du \cdots \du M_n; \mathcal{G}_{(n)}, \mathcal{S}_{(n)}, \T\rangle$ without destroying the duality.

Let $j, k\in \{0, \dots, n\}$ with $j < k$ and consider the relation
$\lejk$. Let $\bm{\leqslant}^{jk}$ be the algebra with underlying set
$\lejk$ and $\rho_1 \colon \bm{\leqslant}^{jk}\to \M_j$, 
$\rho_2 \colon \bm{\leqslant}^{jk}\to \M_k$ be the restrictions of the
projections. By Lemma~\ref{lem:Jonsson}, the non-empty sorts of
$\mathrm{D}(\bm{\leqslant}^{jk})$ are
$\CV_n(\bm{\leqslant}^{jk}, \M_0) = \{g_j \circ \rho_1, g_k\circ \rho_2\}$, $\CV_n(\bm{\leqslant}^{jk}, \M_j) = \{\rho_1\}$, and $\CV_n(\bm{\leqslant}^{jk},
\M_k) = \{\rho_2\}$.
Define $\gamma\colon \mathrm{D}(\bm{\leqslant}^{jk}) \to M_0 \du\dots\du M_n$ by
\[
\gamma(g_j \circ \rho_1) = \gamma(g_k \circ \rho_2) = \mbf \in M_0, \quad
\gamma(\rho_1) = \zero \in M_j, \quad \text{and} \quad \gamma(\rho_2) = \mbf \in M_k.
\]
We show that $\gamma$ preserves each
relation and operation in $\big(\mathcal S_{(n)}\comp\{\lejk\}\big)\cup \mathcal G_{(n)}$ and does not preserve $\lejk$, whence $\lejk$ cannot be removed without destroying the duality.

Since $(\rho_1, \rho_2) \in {\leqslant^{jk}}$ in ${\mathrm{D}(\bm{\leqslant}^{jk})}$, but $(\gamma(\rho_1), \gamma(\rho_2)) = (\zero, \mbf)\notin {\leqslant^{jk}}$, the map $\gamma$ does not preserve~$\leqslant^{jk}$. As $g_j(\mbf) = g_k(\mbf) = \mbf$, the map $\gamma$ preserves the action of the map $g_k$, for all $k\in\{1, \dots, n\}$.
It remains to prove that $\gamma$ preserves $\lej$ and ${\lek}$, as all other relations in $\mathcal S_{(n)}$ are empty on $\mathrm{D}(\bm{\leqslant}^{jk})$; but this is trivial as $(\zero, \zero)\in {\lej}$ and $(\mbf, \mbf)\in {\lek}$.

Now let $k\in \{1, \dots, n\}$,
let $\bm{\leqslant}^k$ be the algebra with underlying set ${\lek}$, and let $\rho_1, \rho_2 \colon \bm{\leqslant}^k\to \M_k$ be the restrictions of the projections. Again by Lemma~\ref{lem:Jonsson}, the non-empty sorts of $\mathrm{D}(\bm{\leqslant}^k)$ are
\[
\CV_n(\bm{\leqslant}^k, \M_0) =
\{g_k \circ \rho_1, g_k\circ \rho_2\}
\quad\text{and}\quad \CV_n(\bm{\leqslant}^k, \M_k) = \{\rho_1, \rho_2\}.
\]
Define
$\gamma\colon \mathrm{D}(\bm{\leqslant}^k) \to M_0 \du\dots\du M_n$ by
\[
\gamma(g_k \circ \rho_1)
= \gamma(g_k \circ \rho_2) = \mbf \in M_0, \quad
\gamma(\rho_1) = \zero \in M_k, \quad \text{and} \quad \gamma(\rho_2) = \mbf \in M_k.
\]
Again it is easy to see that $\gamma$ does not preserve $\lek$ (as $(\zero,\mbf)\notin {\lek}$), that $\gamma$ preserves (by construction) the action of the map $g_k$, for all $k\in\{1, \dots, n\}$,
that $\gamma$ preserves $\lez$ (as $(\mbf, \mbf)\in {\lez}$), and that $\gamma$ preserves all other relations in $\mathcal S$ (as they are empty on  $\mathrm{D}(\bm{\leqslant}^k)$). Consequently, ${\lek}$ cannot be removed without destroying the duality.

Finally, fix $k \in \{1, \dots, n\}$. We shall show that $g_k$ cannot be removed from $\mathcal G_{(n)}$ without destroying the duality.
A third application of Lemma~\ref{lem:Jonsson} shows that the non-empty sorts of $\mathrm{D}(\M_k)$ are
 \[
\CV_n(\M_k, \M_0) = \{g_k\} \quad\text{and}\quad \CV_n(\M_k, \M_k) = \{\id_{M_k}\}.
 \]
 Define $\gamma\colon \mathrm{D}(\M_k) \to M_0 \du\dots\du M_n$ by
$\gamma(g_k) = \mbf \in M_0$ and $\gamma(\id_{M_k}) = \mbt \in M_k$. Clearly, $\gamma$ does not preserve the action of $g_k$ since
\[
\gamma\big(g_k^{\mathrm{D}(\M_k)}(\id_{M_k})\big) = \gamma\big(g_k \circ \id_{M_k}\big) = \gamma(g_k) = \mbf \ne \mbt = g_k(\mbt) = g_k(\gamma(\id_{M_k})).
\]
The map $\gamma$ preserves $\lez$ since $(\mbf, \mbf)\in {\lez}$, preserves
$\lek$ since $(\mbt, \mbt)\in {\lek}$, and preserves all other relations in $\mathcal S_{(n)}$ as they are empty on $\mathrm{D}(\M_k)$. Hence $g_k$ cannot be deleted from $\mathcal G_{(n)}$ without destroying the duality.
\end{proof}

\section*{Acknowledgements}

The authors would like to thank Jane Pitkethly for carefully
preparing their diagrams in TikZ. The first and second author would
like to thank Matej Bel University for its hospitality during a
research visit in September 2017. The first author would like to
thank Hilary Priestley and Leonardo Cabrer for useful discussions
and guidance during his DPhil studies at the University of Oxford
when the first work on these bilattices took place~\cite{C-thesis}.
The third author acknowledges the support of Slovak grant VEGA
1/0337/16 and the hospitality of La Trobe University during his
stay there in August 2018.


\end{document}